\newcommand{\E}{\mathbb{E}}
\numberwithin{equation}{section} 
\DeclarePairedDelimiterX{\Norm}[1]{\|}{\|}{\Normargs{#1}}
\NewDocumentCommand{\Normargs}{>{\SplitArgument{1}{;}}m}
{\Normargsaux#1}
\NewDocumentCommand{\Normargsaux}{mm}
{\IfNoValueTF{#2}{#1} {#1\nonscript\:\delimsize\vert\allowbreak\nonscript\:\mathopen{}#2}}%
\def\norm{\Norm*}%
\DeclarePairedDelimiterX{\set}[1]{\{}{\}}{\setargs{#1}}
\NewDocumentCommand{\setargs}{>{\SplitArgument{1}{;}}m}
{\setargsaux#1}
\NewDocumentCommand{\setargsaux}{mm}
{\IfNoValueTF{#2}{#1} {#1\nonscript\:\delimsize\vert\allowbreak\nonscript\:\mathopen{}#2}}%
\def\Set{\set*}%
\providecommand{\keywords}[1]{\textbf{Keywords. } #1}
\providecommand{\MSC}[1]{\textbf{MSC (2020). } #1}
\theoremstyle{plain}
\newtheorem{theorem}{Theorem}[section] 
\newtheorem{lemma}[theorem]{Lemma}
\newtheorem{proposition}[theorem]{Proposition}
\newtheorem{corollary}[theorem]{Corollary}
\theoremstyle{definition}
\newtheorem{definition}[theorem]{Definition} 
\newtheorem{example}[theorem]{Example}
\newtheorem{remark}[theorem]{Remark}
\newtheorem{assumption}{Assumption}
\renewcommand{\d}{\,\mathrm{d}}
\date{}
\begin{document}

\title{Stability of travelling wave solutions to reaction-diffusion equations driven by additive noise with Hölder continuous paths}
	
\author[1]{Amjad Saef}
\author[2]{Wilhelm Stannat}
\affil[1]{\small{
  Institute of Mathematics, Technische Universität Berlin,\linebreak
  Straße des 17.\ Juni 136, 10623 Berlin, Germany,\linebreak
  e-mail: \href{mailto:saef@math.tu-berlin.de}{saef@math.tu-berlin.de},\linebreak
  ORCID: \href{https://orcid.org/0009-0006-9503-3367}{0009-0006-9503-3367}}}
\affil[2]{\small{
  Institute of Mathematics, Technische Universität Berlin,\linebreak
  Straße des 17.\ Juni 136, 10623 Berlin, Germany,\linebreak
  e-mail: \href{mailto:stannat@math.tu-berlin.de}{stannat@math.tu-berlin.de},\linebreak
  ORCID: \href{https://orcid.org/0000-0002-0514-3874}{0000-0002-0514-3874}}}

\maketitle
\vspace{-1.7cm}

\begin{abstract}
In this paper we investigate stability of travelling wave solutions to a class of reaction-diffusion equations
perturbed by infinite-dimensional additive noise with Hölder continuous paths, covering in particular fractional Brownian motion with general Hurst index. We obtain long- and short time asymptotic error bounds on the maximal distance from the solution of the stochastic reaction-diffusion equation to the orbit of travelling wave fronts. These bounds, in terms of Hurst index and Hölder exponent, apply to a large class of infinite-dimensional self-similar drivers with Hölder continuous paths, such as linear fractional stable motion. We find that for short times, higher Hurst indices imply higher stability, while for large times, higher Hölder exponents imply higher stability.
\end{abstract}
\keywords{travelling waves, stochastic partial differential equations, nonlinear stability}

\MSC{35K57, 60G22, 60H15, 92C20}

\section{Introduction}
In this work, we study the effect of additive noise with Hölder continuous paths on stability of travelling wave solutions to scalar reaction-diffusion equations of the form 
\begin{equation}\label{detNag}
\partial_t u(t,x) = Au(t,x) + f(u(t,x)),~ (t,x) \in [0,T] \times \mathcal O.
\end{equation}
We focus on the case where $A$ is some (usually partial differential) operator and $f$ is the superposition of an odd-order polynomial with negative leading-order coefficient and a globally Lipschitz function. Finally, $\mathcal O \subset \mathbb R^n$ is an open domain that is translation invariant in the direction of some unit vector $\nu$.
In this context, a travelling wave solution is a solution $v^{TW}$ of \eqref{detNag} such that $$v^{TW}(t) = \hat v(\cdot - ct \nu),~ t \geq 0$$ for some $\hat v \in C^2(\mathcal O)$ and $c>0$, the so-called wavespeed. It is known that if $\mathcal O$ is the real line or a cylindric domain, this equation admits a travelling wave solution if for example $A$ is a Laplacian and $f \colon \mathbb R \rightarrow \mathbb R$ is a cubic nonlinearity \cite{Hadeler1975TravellingFI,VolpertVolpertVolpert,Gardner1986ExistenceOM}. 

Of particular interest is the stability of travelling wave fronts under perturbations, as this property is a necessary feature of models that describe invading fronts that can be observed in experiments, for example spike propagation in nerve axons (cf. \cite{NagumoPulseTrans}). The aim of this work is to demonstrate path-dependent stability properties of travelling wave solutions to reaction-diffusion equations perturbed by infinite-dimensional noise with Hölder continuous paths. More specifically, we consider evolution equations of the form 
\begin{equation}\label{IntNag} \begin{cases} 
\d V(t) = (AV(t) + f\left(V(t)\right))\d t + \varepsilon \d N(t) \\  
V(0) = \hat v& 
\end{cases}\end{equation}
for operators $A$ and reaction terms $f$ as in equation \eqref{detNag} and some Hölder continuous path $N$ with values in a suitable function space. In particular, we investigate the effect of infinite-dimensional self-similar noise on the stability of travelling wave solutions. 

Our motivation to study this problem stems from the recent interest in scale-free and heavy tailed phenomena in neuroscience \cite{HeavyTailBrain, ScaleFeeBrain}. For example, it has been observed that power spectra of synaptic noise and local field potentials resemble that of temporally correlated noise with self-similar features \cite{Destexhe2022, DestexheHighConductance}. At the same time, activity patterns in neural populations \cite{BeggsAvalanches} and distributions of synaptic weights appear to exhibit heavy tails \cite{HeavyTailWeights}. In this light, it is not unreasonable to conjecture that even at the level of a single neuron, the dynamics are perturbed by random influences with self-similar or heavy-tailed distributions \cite{SelectiveNeuralAv}. Mathematical models of such dynamics were studied for example in \cite{FitzHughNagNonGaussian, ChaosHeavyTailed,NeuralNetsLevyNoise}, while experimental evidence of heavy-tailed noise in synaptic clefts has been explored in \cite{NonGaussianNeurotransmitters}. We aim to contribute to the investigation of the presence of such phenomena by isolating effects of different Hurst indices and temporal fluctuations on mesoscopic dynamics.

From a mathematical perspective, self-similar noise processes are of intrinsic interest as stochastic objects, due to their rich structural properties. This applies of course to the extensively studied (fractional) Brownian motion, but also includes non-Gaussian processes, see for example the monographs \cite{TaqquSamorodnitsky} and \cite{TudorBook}. Since Lévy motions and Hermite processes combine central limit properties with stationary increments and self-similarity, they suggest themselves as natural models for non-Brownian noise and have been used as drivers of stochastic differential equations. Fractional Brownian motion in particular is a widely used model of persistent or anti-persistent behaviour in time series, and has been applied in hydrology, telecommunications, and turbulence theory \cite{Mandelbrot2001GaussianSA}. Similarly, fractional $\alpha$-stable Lévy processes capture heavy-tailed displacements with long-range dependence, making them suitable for modelling anomalous diffusion and intermittent transport phenomena \cite{IntermittentStuff}.

Stochastic (partial) differential equations driven by processes like fractional Brownian motion (fBm) require specialised techniques for integration, as the classical Itô calculus framework may no longer apply. Theories extending the probabilistic approach of Itô calculus, based for example on the Skorokhod integral (cf. \cite{AlosNualart}) or Russo-Vallois type integrals (cf. \cite{RussoVallois}), have been successfully adapted to general Gaussian and other non-standard noises. Alternatively, more analytically flavoured approaches which extend Young's pathwise integration theory have proven to be particularly successful in recent decades. Notably, rough path theory (cf. \cite{Lyons1998}) has been shown to provide a robust framework for handling differential equations driven by irregular signals, including those with low regularity, by encoding higher-order information about the signals' paths. In the scope of this paper, due to the smoothing properties of the semigroups generated by the operators $A$ that we consider, we can apply a simpler theory of Young integration against Hölder continuous paths developed in \cite{Gubinelli2004YoungIA} to obtain a solution concept that is appropriate in our setting. A strength of this approach is that it generalises the $L^2$-theory of integration applied e.g. in \cite{Pasik-Duncan2006} and yields pathwise bounds.

In the deterministic setting, the orbit $\Gamma = \Set{\hat v(\cdot - t \nu); t \in \mathbb R}$ of the translates of the travelling wave profile has been shown to be stable for diverse instances of equations which fit the framework of equation \eqref{detNag}. Here, we say that $\Gamma$ is stable in a suitable Banach space $\mathcal B$ if for any $\delta > 0$, $$\sup_{t \geq 0} \mathrm{d}(u(t),\Gamma) \coloneqq \sup_{t \geq 0} \inf_{\phi \in \mathbb R} \norm{u(t)-\hat v(\cdot - \phi \nu)}_{\mathcal B} \leq \delta$$ for any solution $u$ of equation \eqref{detNag} with $\norm{u_0 - \hat v}$ small enough. These results were primarily obtained by analysis of the spectral properties of the linearisation of \eqref{detNag} around translates of the travelling wave front (see for example \cite{EvansStabilityNerve, henry81:GTS, KapitulaMultDimStab}) or maximum principles and comparison theorems, as pioneered in \citet{Fife1977TheAO}. While the latter methods 
cannot be transferred to the stochastic setting without
unnatural monotonicity conditions on the noise term, the first approach can generally be applied to the stochastic case. However, these qualitative perturbation results on the spectrum of the linearisation generally yield limited quantitive information. For this reason, we follow e.g. \cite{KruegerStannat} and \cite{LangStannat} and make use of functional inequalities of Poincaré type to gain tighter estimates on the decay of perturbations orthogonal to the tangential direction of $\Gamma$.

Demonstrating stability in stochastic settings additionally requires adapted definitions of what constitutes a travelling wave front and how to verify its stability under successive perturbations. In particular, the phase of the stochastically perturbed wave front is not necessarily uniquely specified. In the last decade, several articles on stability of travelling waves under stochastic forcing tackled this problem by introducing (stochastic) ordinary differential equations whose solutions approximate the phase of the travelling wave front $\tilde v \in \Gamma$ with minimal distance (in e.g. $\mathcal B = L^2(\mathbb R)$ or $\mathcal B = H^1(\mathbb R)$) to the observed stochastic process (cf. \cite{StannatTW,KruegerStannat, InglisMacLaurin, HamsterHupkes,EichingerKuehn}). Subsequently, these works showed that the distance of the stochastic travelling wave to the specific wavefront remains small. 

In this work, we follow the method first implemented in \cite{StannatTW}, that consists of introducing 
a time-dependent phase correction $C(t)$ following gradient descent dynamics minimizing the $L^2$-distance between the solution $V$ to equation \eqref{IntNag} and $\Gamma$. Let $\tilde v^{TW}(t) \coloneqq \hat v\left(\cdot - C(t)\nu\right)$ and $\tilde U(t) \coloneqq V(t) - \tilde v^{TW}(t)$. As in \cite{KruegerStannat} and \cite{EichingerKuehn}, this phase correction is designed so that $\tilde U(t)$ can be decomposed as 
\begin{equation} 
\label{decomposition} 
\tilde U(t) = \varepsilon Z_\varepsilon(t) + y_\varepsilon(t),
\end{equation} 
where $Z_\varepsilon$ denotes an Ornstein-Uhlenbeck type process which satisfies 
\begin{equation} \label{NewOU}
\d Z_\varepsilon(t) = A(t) Z_\varepsilon(t) \d t + \varepsilon \d N(t)
\end{equation} for a family of dissipative operators $(A(t))_{t \in [0,T]}$, and $y_\varepsilon(t)$ denotes the residual nonlinear part. An analysis of this decomposition then yields that $\sup_{0 \leq t \leq T}\norm{\tilde U}_{L^2(\mathcal O)}$ remains small for small noise amplitudes $\varepsilon > 0$. Evidently, it then follows that $\mathrm{d}\left(V(t), \Gamma\right)$ remains small for all $t \in [0,T]$.

In order to solve equation \eqref{NewOU} for general Hölder continuous drivers $N$, we introduce a simple extension of the framework introduced in \cite{Gubinelli2004YoungIA}. This approach employs pathwise convolutions against evolution systems generated by linear perturbations of injective, sectorial operators. Pathwise mild approaches to non-autonomous evolution equations perturbed by an irregular path have previously been developed for (possibly multiplicative) Wiener noise in \cite{PronkVeraar}, and applied to prove existence of random attractors of dynamical systems generated by SPDEs with additive noise and quasi-linear drift in \cite{Kuehn2020RandomAV}.

To reach the desired stability result, we exploit dissipativity of the operator $A(t)$ and show that the squared norm of the residual $y_\varepsilon$ satisfies a differential inequality which implies a bound of the form $$ \sup_{0 \leq t\leq T}\norm{y_\varepsilon(t)}_{L^2} \in o\left(\varepsilon \sup_{0 \leq t\leq T}\norm{Z_\varepsilon(t)}_{L^2 \cap L^{r+1}}\right)$$ for initial conditions $\tilde U(0) = y_\varepsilon(0) = 0$  and $\varepsilon$ small enough. Here $r$ denotes the degree of the odd-order polynomial nonlinearity. Thereby, we find that \begin{equation} \label{oestimate}
\sup_{0 \leq t \leq T} \mathrm{d}(V(t),\Gamma) \leq \sup_{0 \leq t \leq T} \norm{\tilde U(t)}_{L^2} \leq \varepsilon \sup_{0 \leq t \leq T}\norm{Z_\varepsilon(t)}_{L^2} + o\left(\varepsilon \sup_{0 \leq t\leq T} \norm{Z_\varepsilon(t)}_{L^2 \cap L^{r+1}}\right)
\end{equation} for any small enough $\varepsilon > 0$. To make the dependence on the path of the driver $N$ explicit, we derive $\varepsilon$-independent bounds on $\norm{Z_\varepsilon}_{L^\infty([0,T];L^2\cap L^{r+1})}$, in terms of the time $T$ and the Hölder norm on $[0,T]$ of the driver $N$. To reach these estimates, we  extend an integration by parts formula for stochastic convolutions against fractional Brownian motion (cf. \cite{SchmalfussMaslowski}) to general Hölder continuous paths. Applying these bounds to inequality \eqref{oestimate}, we obtain the main results of the pathwise stability analysis, Proposition \ref{ShortLongBounds}.

The main results of this article concern $H$-self-similar drivers $X^H$ with Hölder regularity $\eta<H$. Theorems \ref{mainthm}, \ref{LongboundSelfSim} and Corollary \ref{AppliedSelfSim}, propose estimates for the probability that the error $d(V(t), \Gamma)$ satisfies the short time bound \begin{equation} \label{IntroShortBound}
\sup_{0 \leq t \leq T} d(V(t), \Gamma)\lesssim T^{H} + o\left(T^{H} \right), ~ T \ll 1
\end{equation} and, given $0<\delta$, the long-time bound \begin{equation} \label{IntroLongBound}
\sup_{0 \leq t \leq T} d(V(t), \Gamma)\lesssim  \delta + o\left(\delta\right),~ T \gg 1.
\end{equation} for $\varepsilon \sim T^{H-\eta}$. We achieve this by combining the pathwise bounds on $y_\varepsilon(t)$ with tail estimates on Hölder norms of self-similar processes with values in Banach spaces. 

The short term bound \eqref{IntroShortBound} shows a direct relationship between the Hurst index $H$ of the driver and stability on short times. As $H$ can in principle be arbitrarily large and $\eta$ arbitrarily low, we see that the time-regularity of the driver is secondary to these estimates. However, for large times, the degree of Hölder continuity determines the necessary scaling. Let \begin{equation}
\eta_X \coloneqq \sup\{\eta<H  :X^H \text{ is Hölder continuous with exponent } \eta\}.
\end{equation} The large time bound \eqref{IntroLongBound} shows that for for any $\eta<\eta_X$, the scaling $\varepsilon \sim T^{-(H-\eta)}$ yields stability on large time scales (cf. Thm \ref{LongboundSelfSim}). It is notable that in important examples (cf. Appendix \ref{TailEstimateSection}), the degree of Hölder continuity is largely determined by tails of increments of the driver.

To illustrate the scope of applications, we include an overview of relevant Hurst parameters and Hölder exponents in the appendix. Therein, we derive tail estimates for important explicit examples such as  fractional $\alpha$-stable Lévy motion, which includes fractional Brownian motion. Though these results are direct generalisations of finite-dimensional arguments, such tail estimates on Hölder norms of infinite-dimensional fractional stable Lévy motions have to our knowledge not yet been derived.

The article is organised as follows: In Section \ref{Prelims}, we provide an overview of the mathematical setting and assumptions we work with, including examples of reaction-diffusion equations that satisfy the stated assumptions. Then, in Section \ref{Existence}, we introduce the relevant notions of solutions to equations perturbed by noise with Hölder continuous paths and prove pathwise existence and uniqueness in the setting we specified in the previous section. Section \ref{pathstab} derives decomposition \eqref{decomposition} and provides estimates on the residual $y_\varepsilon (t)$ of the type 
$$
\norm{y_\varepsilon}_{L^\infty([0,T];L^2)}\in o\left(\varepsilon \norm{Z_\varepsilon}_{L^\infty\left([0,T];L^2 \cap L^{r+1}\right)}\right). 
$$
This effectively shows that $Z_\varepsilon$ is a first-order approximation of $\tilde U$. We finish Section \ref{pathstab} by deriving explicit bounds on $Z_\varepsilon$ in terms of the Hölder norm of the driver $N$ and the time $T$. Section \ref{SectionfBM} then combines the pathwise results with tail estimates for Hölder norms of Banach space-valued self-similar noise to derive long- and short term asymptotics on the distance of perturbed travelling waves to the orbit of travelling wave fronts.

\section{Preliminaries} \label{Prelims}
\subsection{Setting} \label{Setting}
In this section, we present the fundamental assumptions and notation relevant to this article. Generally, the constant $C$ stands for a generic constant that changes possibly from line to line.

Let $\mathcal O$ be some open domain $\mathcal O \subset \mathbb R^n$, $n \geq 1$, that is invariant under translation in the direction of some vector $\nu \in \mathbb R^n$. To cover the wide range of functions that (fractional powers of) differential operators can be applied to, we at first consider a linear operator $A$ defined on some subspace $D(A)$ of the vector space $L^0(\mathcal O)$ of Borel-measurable functions up to almost sure equivalence.

\begin{assumption} \label{TranslInv}
Our first assumption on $A$ is that the operator commutes with translation, i.e. $f \in D(A)$ iff $f(\cdot + c\nu) \in D(A)$ given any $c \in \mathbb R$, and $$Af(\cdot+c\nu) = (Af)(\cdot+c\nu).$$ 
\end{assumption}
We place the following further assumptions on the operator $A$.
\begin{assumption} \label{AnaSem}
To ensure that $A$ generates an analytic semigroup when restricted to $L^p$-spaces, we assume that for all $1 < p < \infty$ the operator $$-A_p \coloneqq -A|_{D_p(A)} \colon D_p(A) \subset L^p(\mathcal O) \rightarrow L^p(\mathcal O)$$ is injective and sectorial on $L^p(\mathcal O)$, where $$D_p(A) \coloneqq \Set{f \in L^p(\mathcal O) \cap D(A); Af \in L^p(\mathcal O)}$$ is assumed to be dense in $L^p(\mathcal O)$.  
\end{assumption}
Note that in particular, this implies that the spectrum of $-A_p$ does not intersect with $(-\infty,0)$. Additionally, we obtain a scale of Banach spaces $(\mathcal B^p_\gamma)_{\gamma \in \mathbb R}$ defined as the domains of fractional powers $$\mathcal B^p_\gamma = D\left((-A_p)^\gamma\right)$$ for $\gamma \geq 0$, while for $\gamma < 0$, we define $$\mathcal B^p_\gamma = \overline{L^p(\mathcal O)}^{\norm{(-A)^\gamma \cdot}_{L^p(\mathcal O)}},$$ i.e. the completion of $L^p(\mathcal O)$ under the norm $\norm{(-A)^\gamma \cdot}_{L^p(\mathcal O)}$.
\begin{assumption} \label{SlfAdj}
As an operator $$A_2 \colon D_2(A) \subset L^2(\mathcal O) \rightarrow L^2(\mathcal O),$$ the generator $A$ is self-adjoint. Equipped with the inner product $$\langle f,g \rangle_{\gamma} \coloneqq \langle f, g \rangle_{L^2} + \langle (-A)^\gamma f, (-A)^{\gamma}g \rangle_{L^2}$$ induced by $(-A)^\gamma$, we find that the spaces $\mathcal B^2_\gamma$ are in fact Hilbert spaces for $\gamma \geq 0$.
\end{assumption}

\begin{assumption} \label{NemytskiiAss}
Now, let a Nemytskii operator $f \colon \mathbb R \rightarrow \mathbb R$ of the form $f= f_0 + f_1$ be given, where $f_0$ is a polynomial of odd degree $r \geq 3$ with negative leading order coefficient, and $f_1 \in C^2(\mathbb R)$ with bounded first and second derivative.
\end{assumption}

\begin{assumption} \label{IntPl}
In order to establish the stability properties in Section \ref{pathstab}, we need a type of Gagliardo-Nirenberg interpolation inequality given by \begin{equation} \label{IntPlIneq}\norm{u}_{L^p(\mathcal O)} \leq C_p \norm{(-A_2)^{1/2} u}^{\theta_p}_{L^2(\mathcal O)}\norm{u}^{1-\theta_p}_{L^2(\mathcal O)}\end{equation} for $u \in \mathcal B^2_{1/2} = D((-A_2)^{1/2})$ and $3 \leq p \leq (r+1)$. Here $r$ denotes the growth exponent of the polynomial nonlinearity $f_0$. The constants $C_p, \theta_p$ depend on the exponent $p$. We additionally assume that \begin{equation} \label{ExponentCondition}
\theta_p < 2/p
\end{equation} holds for $3 \leq p \leq r$, but not necessarily for $p = r+1$.
\end{assumption}
\begin{remark}
Under the same conditions on $C_p, \vartheta_p$, our results still hold if we replace \eqref{IntPlIneq} by the weaker version $\norm{u}_{L^p} \leq C_p \norm{u}^{\theta_p}_{\mathcal B^2_{1/2}}\norm{u}^{1-\theta_p}_{L^2(\mathcal O)}.$ The affected Lemmas \ref{diffineq} and \ref{Zbound} then still follow after minor modifications of the involved constants.
\end{remark}

\begin{remark}
The main results of this work are also applicable for $f_0 \equiv 0$ or $f_0 \equiv -cx$ for some $c > 0$.  In this case, inequality \eqref{IntPlIneq} only needs to hold for $p = 3$, with $\theta_3 < 2/3$. The existence theorem remains nearly unchanged (cf. Remark \ref{linearf0}) and the changes in subsequent theorems are laid out in Remark \ref{f0equal0}.
\end{remark}

\begin{example} \label{FracLap}
Let $\Delta \colon H^2(\mathbb R^d) \subset L^2(\mathbb R^d) \rightarrow L^2(\mathbb R^d)$ denote the Laplace operator on $\mathcal O = \mathbb R^d$. Then, for all $s > 0$, $A = -(-\Delta)^s$ satisfies Assumptions \ref{TranslInv} to \ref{SlfAdj} (cf. \cite{Komatsu1966FractionalPO}, Thm 10.5). We now specify conditions for Assumption \ref{IntPl} to hold. For $p = q = 2$, the fractional Gagliardo-Nirenberg inequality \cite{fractionalGalNir} states that for $2 \leq p < \infty$ and $s \geq \left(\frac12 - \frac{1}{p}\right)d > 0$, it holds that \begin{equation} \label{GalNir}
\norm{u}_{L^p} \leq C_p \norm{(-\Delta)^{s/2}u}^{\theta_p}_{L^2} \norm{u}^{1-\theta_p}_{L^2}
\end{equation}
with $\theta_p = \left(\frac12 - \frac{1}{p}\right) \frac{d}{s}.$ As an example, for $d = r = 3$, one would need $s \geq \frac{3}{4}$ to obtain the interpolation inequality \eqref{IntPlIneq} in the case $p = r+1 = 4$. If $s$ is strictly larger than $\frac{3}{4}$, then condition \ref{ExponentCondition} is also fulfilled.
\end{example}

\begin{example} \label{CylDom}
Consider $\mathcal O = \mathbb R \times \mathbb T$, where $\mathbb T$ denotes the flat torus. Let $A = \partial_{xx} -(-\partial_{yy})^s$, for simplicity with $0 < s < 1$. Then, it is known that Assumptions \ref{TranslInv} to \ref{SlfAdj} hold in this setting.

We can further verify Assumption \ref{IntPl}. We first verify the inequality for the fully fractional Laplacian $- (-\partial_{xx}-\partial_{yy})^s$ on the specified domain. We embed $\mathcal O$ into $\tilde{\mathcal O} = \mathbb R \times [0,1]$, which is a Sobolev extension domain (see Theorem 8.6 in \cite{FracSobolevFirstCourse}), so any function $W^{s,2}(\mathcal O)$ embeds continuously into $W^{s,2}(\mathbb R^2)$ with bounded scaling of the $L^2$-norm. The usual Gagliardo-Nirenberg inequality then applies. For $u \in W^{s,2}(\tilde{\mathcal O})$, let $\tilde u$ denote its extension. Further, let $\dot W^{s,2}$ denote the homogeneous Sobolev-Slobodeckij space. Then, for $r,s, C_p$ and $\theta_p$ chosen as in the previous example,$$
\begin{aligned}
\norm{u}_{L^r(\tilde{\mathcal O})} \leq  \norm{\tilde u}_{L^r(\mathbb R^2)} &\leq C_p \norm{(-\Delta)^{s/2}\tilde u}^{\theta_p}_{L^2(\mathbb R^{2})} \norm{\tilde u}^{1-\theta_p}_{L^2(\mathbb R^{2})} \\ & \leq C\cdot C_p \norm{\tilde u}^{\theta_p}_{\dot W^{s,2}(\mathbb R^2)} \norm{ \tilde u}^{1-\theta_p}_{L^2(\mathbb R^2)} \\ &\leq  C\cdot C_p \norm{ u}^{\theta_p}_{\dot W^{s,2}(\tilde{\mathcal O})} \norm{  u}^{1-\theta_p}_{L^2(\tilde{\mathcal O})},
\end{aligned}$$ where the second line follows by equivalence of the homogeneous Sobolev-Slobodeckij space with the corresponding Bessel potential space \cite{fractionalGalNir}. Now, if we include periodic boundary conditions, then it actually holds that $\norm{ u}_{\dot W^{s,2}(\mathcal O)}$ is equivalent to $\norm{(-\Delta)^{s/2} u}^{\theta_p}_{L^2(\mathcal O)}$, defined in the semi-discrete frequency space. In conclusion, we demonstrated the desired inequality. A quick calculation in frequency space then shows that the operator $A$ inherits this bound and we can conclude that Assumption \ref{IntPl} holds. 

We note that this reasoning extends to higher-dimensional cylindrical domains, and, in the case $s = 1$, to the Dirichlet and Neumann Laplacian, where Assumption \ref{AnaSem} follows by maximal $L^p$-regularity \cite{Nau2011} and \ref{IntPlIneq} is verified as above.
\end{example}

\subsection{Travelling wave solutions} \label{TWSection}
The central object of study of this article is a travelling wave profile $v^{TW}_0.$
To simplify notation, define $$v^{TW}_{t} \coloneqq v^{TW}_0(\cdot - t \nu), ~ t \in \mathbb R$$ where $\nu \in \mathbb R^n$ denotes the direction of wave propagation. For simplicity, we assume that the vector $\nu$ is of unit length in the Euclidean norm on $\mathbb R^n$.
\begin{assumption}\label{TW} We assume that $v_0^{TW} \in D(A) \cap C^1(\mathcal O) \cap L^\infty(\mathcal O).$ In particular, $v^{TW}_{ct} \in D(A)$ for all $t \in \mathbb R$, as a consequence of translation invariance of $A$. We further assume that the travelling wave front $t \mapsto v^{TW}_{ct}$ solves equation \eqref{detNag}, i.e. \begin{equation} \label{WavEq}
\partial_t v^{TW}_{ct} = - c \nu \cdot \nabla v^{TW}_{ct} = Av_{ct}^{TW} + f(v^{TW}_{ct})
\end{equation}
for all $t \in \mathbb R$ and some $c \in \mathbb R$, henceforth called the \textit{wave speed}.  
\end{assumption}
\begin{remark}
By translation invariance, \eqref{WavEq} only needs to hold for $c=0$ for Assumption \ref{TW} to be satisfied. Further, it automatically follows that $t \mapsto v^{TW}_{ct+k}$ is a travelling wave solution for any $k \in \mathbb R$.
\end{remark}
\begin{assumption} \label{Intgrbl}
We assume that the travelling wave front's directional derivative in the direction of propagation is square integrable, i.e. $$\nu \cdot \nabla v^{TW}_0 \in L^2(\mathcal O).$$ Additionally, if $c = 0$ so that we have a standing wave solution, we assume that $v^{TW}_0$ is twice differentiable and $$\nu \cdot H(v^{TW}_0) \cdot \nu \in L^2(\mathcal O),$$ where $H(v^{TW}_0) = (\partial_{ij}v^{TW}_0)_{1 \leq i,j \leq n}$ denotes the Hessian matrix.
\end{assumption}
\begin{remark} \label{TechnicalRemark}
The purpose of Assumption \ref{Intgrbl} is mainly technical. In the scope of this paper, we avoid treating $A$ as an operator on, for example, the space of bounded continuous functions. The intended effect is that the space in which equation \eqref{WavEq} lives remains unspecified. However, $L^2$-integrability of the spatial derivative combined with boundedness of $v^{TW}_c$ implies that $$A(v^{TW}_{ct_1}-v^{TW}_{ct_2}) = -c\nu\cdot \nabla(v^{TW}_{ct_1}-v^{TW}_{ct_2}) - (f(v^{TW}_{ct_1})-f(v^{TW}_{ct_2})) \in L^2(\mathcal O).$$
It follows that $v^{TW}_{ct_1}-v^{TW}_{ct_2} \in D_2(A) \subset L^2(\mathcal O)$, and we can instead work with the properties of $A$ as a sectorial operator on $L^2(\mathcal O)$.
\end{remark}

The following assumption will be crucial to our stability analysis. This type of inequality ensures exponential decay of perturbations orthogonal to the tangential direction of the manifold. Thus, we can verify stability of the manifold of travelling wave fronts. In this case, the function $\nu \cdot \nabla v^{TW}_{x}$ is heuristically assumed to be a tangential vector to a point $v^{TW}_{x} \in \Gamma$. This assumption can be supported for example by noting that $$v^{TW}_{ct}-v^{TW}_{cs} = -c \int_s^t \nu \cdot \nabla v^{TW}_{cr} \d r,$$ so we can see that perturbations in the direction of $\nu \cdot \nabla v^{TW}_{x}$ do not generally decay. 
\begin{assumption} \label{SpcGp}
We assume that the $L^2(\mathcal O)$-linearisation $\mathcal L_{TW}u \coloneqq Au + f'(v^{TW}_{c_0})u$ around any translate $v^{TW}_{c_0}$ of the travelling wave front exhibits a spectral gap inequality of the form \begin{equation} \label{SpcGpIneq}
\langle \mathcal L_{TW} u, u \rangle_{L^2} \leq - \kappa_\ast \norm{u}_{\mathcal B^2_{1/2}}^2 + C_\ast \langle \nu \cdot \nabla v^{TW}_{c_0}, u \rangle^2_{L^2}, 
\end{equation} for some $k_\ast, C_\ast > 0$ independent of the phase $c_0$ and any $u \in \mathcal B^2_{1/2}$.
\end{assumption}

We give now give examples of domains $\mathcal O$ and operators $A$ together with reaction potentials $f$ such that travelling wave solutions exist and Assumptions \ref{TranslInv} to \ref{SpcGp} are satisfied.

\begin{example} \label{StannatBistable}
Consider the deterministic bistable reaction-diffusion equation $$\partial_t v(t,x) = \nu \partial_{xx} v(t,x) + f(v(t,x)),~ v(0,x) = v_0(x)$$ for $(t,x) \in \mathbb R_+ \times \mathbb R$. As $A = \nu \Delta = \nu \partial_{xx}$ is the usual Laplace operator on $\mathbb R$, Assumptions \ref{TranslInv}, \ref{AnaSem}, \ref{SlfAdj} and \ref{IntPl} are satisfied. In particular, Assumption \ref{IntPl} holds for $2 \leq r \leq 5$. For such equations, travelling wave solutions are assured to exist if $f \colon \mathbb R \rightarrow \mathbb R$ is a continuously differentiable function satisfying 
$$\begin{aligned}
&f(0) = f(a) = f(1) = 0 ~ ~ ~ \text{for some } a  \in (0,1) \\
&f(x) < 0 ~~~ \text{for } x \in (0,a), f(x) > 0 ~~~ \text{for } x \in (a,1) \\
& f'(x) < 0, f'(a) > 0, f'(1) < 0
\end{aligned}$$
Under these conditions, one can verify \cite{Hadeler1975TravellingFI} existence of a monotone increasing travelling wave front $\hat v$ connecting the stable fixed points $0$ and $1$ of the reaction term. It then actually holds \cite{StannatStabilityBistable} that $\partial_x \hat v, \partial_{xx} \hat v \in L^2(\mathbb R)$, so that Assumption \ref{Intgrbl} is satisfied. Let $a \in (0,1)$ be the unique zero of $f$ in the interval $(0,1)$. Under the additional assumption that there exists some $v_\ast \in (a,1)$ such that $f''(v) > 0$ on $(0,v_\ast)$ and $f''(v) < 0$ on $(v_\ast,1)$, it was shown in \cite{StannatStabilityBistable} that Assumption \ref{SpcGp} is fulfilled. Note that we do not assume any growth conditions on the nonlinearity $f$.
\end{example}
\begin{example}
The previous example generalises to cylindrical domains $\mathcal O = \mathbb R \times \mathbb T$, where $\mathbb T$ denotes the flat torus. For simplicity, assume that its volume $\mu(\mathbb T) = 1$ is normalised. Let $a, b > 0$ be real numbers and let $\hat v$ denote the wave front from Example \ref{StannatBistable}. Define $v^{TW}(t,x,y) \coloneqq \hat v(x-ct)$, so that this front is constant in the $y$-direction. Let $A = -\alpha (-\Delta_y)^{s}$ denote the fractional Laplacian on the periodic domain, defined by its action in frequency space. Thus $A v^{TW}(t) = 0$. It follows immediately that $$\begin{aligned}
\partial_t v^{TW}(t) = -c \partial_x \hat v(\cdot-ct) &= a \partial_{xx} \hat v(\cdot-ct) + bf(\hat v(\cdot-ct))= a\partial_{xx}v^{TW}(t)+Av^{TW}(t) + b f(v^{TW}(t)).
\end{aligned}$$  Phrased differently, we see that this actually defines a travelling wave solution. 

Further, we have seen in Example \ref{CylDom} that this operator satisfies Assumptions \ref{TranslInv} to \ref{SlfAdj}, and Assumption \ref{IntPl} for $r = 3$ and $s > \frac12$. Given strong enough coercive properties of $A$, the spectral gap property \ref{SpcGp} is inherited:
$$\begin{aligned}
\langle \nu \partial_{xx}u + A u+f'(v^{TW})u,u\rangle_{L^2} &\leq - \alpha \norm{(-\Delta_y)^{s/2} u}^2_{L^2} + \int_{\Omega} - a \int_{\mathbb R} (\partial_x u)^2 + f'(v^{TW})u^2\d x \d y
\\ &\leq - \alpha \norm{(-\Delta_y)^{s/2}u}^2_{L^2}- \kappa_\ast \norm{\partial_x u}^2_{L^2} + C_\ast \int_\Omega\left(\int_{\mathbb R} \partial_x \hat v \cdot u\d x\right)^2 \d y.
\end{aligned}$$
This is almost in the desired form; the only necessary ingredient to arrive at the spectral gap inequality is the fractional Poincaré inequality $$\norm{f-\int_\mathbb T f \d y}_{L^2} \leq \frac{1}{(2\pi)^s} \norm{(-\Delta)^{s/2}f}_{L^2}$$ which follows for $s > 0$ and $f \in D((-A)^\frac12)$ since $$\sum_{k \in \mathbb Z \setminus \{0\}} |\hat f(k)|^2 = \sum_{k \in \mathbb Z \setminus \{0\}} \frac{(2\pi k)^s}{(2\pi k)^s}  |\hat f(k)|^2 \leq \frac{1}{(2\pi)^s} \sum_{k \in \mathbb Z} (2\pi k)^s|\hat f(k)|^2.$$ By the Fubini identity, $(-\Delta)^{s/2}\left(\int_{\mathbb R} \partial_x \hat v(x) \cdot u(x,\cdot)\d x \right)(y)= \int_{\mathbb R} \partial_x \hat v(x) (-\Delta)^{s/2}\left(u(x,\cdot)\right)(y)\d x$ and thus
$$\int_\Omega\left(\int_{\mathbb R} \partial_x \hat v \cdot u\d x\right)^2 \d y \leq (2\pi)^{-s} \int_\mathcal{O} \partial_x \hat v |(-\Delta)^{s/2}u|^2 \d x \d y + C_\ast \left(\int_{\mathcal O} \partial_x \hat v \cdot u \d x \d y\right)^2,$$ where we could apply Jensen's inequality since $\partial_x \hat v \in L^1(\mathbb R)$. Now assume $\alpha > C_\ast C\norm{\partial_x \hat v}_{L^\infty}$.

We remark that this example generalises to $\mathbb T^n$, $n > 1$, and also applies if $A$ is a symmetric, constant coefficient strongly uniformly elliptic operator in $\Omega \subset \mathbb R^d$, with Neumann boundary conditions. 
\end{example}

\begin{remark}
The investigation of the spectral gap property \ref{SpcGp} could be an interesting direction of future research. Consider $A = -\gamma\partial_{xxxx} + \alpha \partial_{xx}$ or $A = -(-\Delta)^s$ for $1/2 < s < 1$ on the real line $\mathbb R$, and reaction potentials $f = f_0 + f_1$ as in Example \ref{StannatBistable}. Existence of travelling waves $\hat v^{\gamma}, \hat v^s$ which satisfy Assumptions \ref{TW} and \ref{Intgrbl} was shown in \cite{JanBergTWFourthOrder} and \cite{TWfracLap, CHAN20174567}, and one can further show that these terms satisfy Assumptions \ref{TranslInv} to \ref{IntPl} for suitable parameter ranges. 
\end{remark}

\section{Existence of Solutions} \label{Existence}
Before we can define and demonstrate our notion of stability, we need to specify the concept of solution of the evolution equation
\begin{equation}\label{Nag} \begin{cases} 
\d V(t) = (AV(t) + f\left(V(t)\right))\d t + \varepsilon \d N(t) \\  
V(0) = V_0 & 
\end{cases}\end{equation}
perturbed by some Hölder continuous path $N$. To simplify notation, let $v^{TW}(t) \coloneqq v^{TW}_{ct}$ denote the travelling wave solution of equation \eqref{detNag} with initial condition $v^{TW}(0) = v^{TW}_0.$ To circumvent technical difficulties, we formally decompose $$V(t) = v^{TW}(t) + U(t) = v^{TW}(t) + (V(t)-v^{TW}(t))$$ and instead solve the semilinear equation 
\begin{equation}\label{diffNag} \begin{cases} 
\d U(t) = (AU(t) + f(U(t)+v^{TW}(t)) - f(v^{TW}(t)))\d t + \varepsilon \d N(t) \\  
U(0) = U_0 & 
\end{cases}\end{equation}
on $[0,T] \times L^2(\mathcal O)$ where $U_0 = V_0 - v^{TW}_0$. Let $(S(t))_{t \geq 0}$ denote the semigroup generated by the operator $A$. Then any prospective mild solution of equation \eqref{diffNag} should satisfy the identity $$U(t) = S(t)U_0 + \int_0^t S(t-s)\left(f(U(s)+v^{TW}(s)) - f(v^{TW}(s))\right)\d s + \varepsilon \int_0^t S(t-s) \d N(s)$$ for all $t \in [0,T]$. For this to be well-defined, we first need to define the process $$N_A(t) \coloneqq  \int_0^t S(t-s) \d N(s)$$ for a given Hölder continuous path $N$. To this end,  we follow \citet{Gubinelli2004YoungIA} to introduce a pathwise notion of convolution of an analytic semigroup against a Hölder continuous path. For a given Banach space $\mathcal B$, define $$C^\eta_0([0,T];\mathcal B) = \left\{ f \text{ is Hölder continuous with } f(0) = 0\right\}.$$ Equipped with the Hölder seminorm $\norm{\cdot}_{C^\eta_0}$ on $\mathcal B$, $C^\eta_0$ is in fact a Banach space. Let $L$ be some sectorial operator on a Banach space $\mathcal B$ and $X \in C^\eta([0,T];\mathcal B)$.  Naturally, for initial condition equal to $0$, it should hold that a mild solution of the Cauchy problem $$\mathrm d Y(t) = L Y(t) \mathrm d t + \mathrm d X(t)$$ satisfies $$Y(t) \coloneqq \int_0^t P(t-s) \d X(s),$$ where $(P(t))_{t \geq 0}$ denotes the analytic semigroup generated by $L$. We define this process by means of a Riemann-Stieltjes integral against $N$. 
\begin{remark}
The notation $C^\eta_0$ should not be confused with the space of $\eta$-Hölder continuous functions $f$ such that $$\lim_{\delta \to 0} \sup_{\substack{ 0\leq s < t \leq 1 \\ |t-s| < \delta}} \frac{\norm{f(t)-f(s)}_\mathcal{B}}{|t-s|^\eta} = 0.$$
\end{remark}
\begin{definition} Let $\mathcal B$ be a Banach space and $(P(t))_{t \geq 0}$ be a strongly continuous semigroup on $\mathcal B$. For a function $X \colon [0,t] \rightarrow \mathcal B$, we define the convolution of $X$ against the semigroup  generated by $A$ as the limit \begin{equation}\label{riemsum}
\int_0^t P(t-s) \d X(s) \coloneqq \lim_{n \to \infty} \sum_{t^n_k \in \pi_n, t^n_k < t} P(t-t^n_k)(X(t^n_{k+1})-X(t^n_k)),
\end{equation} whenever this limit exists uniquely for any sequence of partitions $\pi_n = \Set{t^n_0, \dots , t^n_{k_n}}$ of $[0,t]$ such that $|\pi_n| \to 0$.
\end{definition}
\begin{proposition}[\cite{Gubinelli2004YoungIA}] \label{YoungConv}
Suppose that $L \colon D(L) \subset \mathcal B \rightarrow \mathcal B$ is an injective sectorial operator. Let $(\mathcal B_\gamma)_{\gamma \in \mathbb R}$ be the induced scale of Banach spaces (cf. Assumption \ref{AnaSem}) and $(P(t))_{t \geq 0}$ be the analytic semigroup generated by $L$. Let $X \in C^\eta([0,T];\mathcal B_{-\gamma})$ with $\gamma < \eta$. Then, given $\delta \in [0,\eta - \gamma)$ and $\kappa \in (0, \min(1,\eta - \gamma - \delta))$, the limit \eqref{riemsum} exists in $\mathcal B_\delta$ and in particular, $$Y(\cdot) \coloneqq \int_0^\cdot P(\cdot - s) \d X(s) \in C^\kappa_0([0,T];\mathcal B_\delta).$$ Further, for all $T > 0$, there exist constants $C_1$ and $C_2$ only dependent on $\eta, \gamma, \delta$ and $\kappa$ such that
$$\norm{Y}_{C^\kappa_0([0,T];\mathcal B_\delta)} \leq C_1 \norm{X}_{C^\eta_0([0,T];\mathcal B_{-\gamma})}$$ and $$\sup_{t \in [0,T]} \norm{Y(t)}_{\mathcal B_\delta} \leq C_2 T^{\eta - \gamma - \delta} \norm{X}_{C^\eta_0([0,T];\mathcal B_{-\gamma})}.$$
\end{proposition}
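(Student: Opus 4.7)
The strategy I would follow is the standard one for Young-type convolutions: identify a natural candidate for the limit via a formal integration by parts, establish that the candidate is well-defined as a Bochner integral in $\mathcal{B}_\delta$, and then show that the Riemann sums in \eqref{riemsum} converge to it at a quantitative rate. Because the injective sectorial operator $L$ generates an analytic semigroup $(P(t))_{t>0}$ satisfying $\|(-L)^\alpha P(s)\|_{\mathcal{B}\to\mathcal{B}} \lesssim s^{-\alpha}$, one obtains on the scale $(\mathcal{B}_\beta)_{\beta\in\mathbb{R}}$ the key smoothing bounds
\[
\|P(s)\|_{\mathcal{B}_{-\gamma}\to\mathcal{B}_\delta} \lesssim s^{-(\gamma+\delta)}, \qquad \|(-L)P(s)\|_{\mathcal{B}_{-\gamma}\to\mathcal{B}_\delta} \lesssim s^{-(1+\gamma+\delta)},
\]
valid whenever $\gamma+\delta \geq 0$ (with straightforward modifications otherwise).

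As the natural candidate I would take
\[
\tilde Y(t) := P(t) X(t) + \int_0^t (-L) P(t-s)\bigl(X(t) - X(s)\bigr)\,ds,
\]
where the integral is Bochner in $\mathcal{B}_\delta$. Its integrand is controlled by $C(t-s)^{\eta-1-\gamma-\delta}\|X\|_{C^\eta_0}$, and the assumption $\delta < \eta-\gamma$ makes this integrable on $(0,t]$. Combining with $\|P(t)X(t)\|_{\mathcal{B}_\delta} \lesssim t^{\eta-\gamma-\delta}\|X\|_{C^\eta_0}$ immediately yields the sup estimate $\sup_{t\leq T}\|\tilde Y(t)\|_{\mathcal{B}_\delta} \leq C_2 T^{\eta-\gamma-\delta}\|X\|_{C^\eta_0}$.

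To identify the Riemann sums $S_\pi(t) = \sum_{t_k\in\pi,\, t_k<t} P(t-t_k)(X(t_{k+1})-X(t_k))$ with $\tilde Y(t)$, I would perform a discrete integration by parts on each subinterval $[t_k,t_{k+1}]$, yielding the exact identity
\[
P(t-t_k)(X(t_{k+1})-X(t_k)) - \int_{t_k}^{t_{k+1}} P(t-s)\,dX(s) = \int_{t_k}^{t_{k+1}}(-L)P(t-s)\bigl(X(s)-X(t_{k+1})\bigr)\,ds,
\]
where the right-hand integral is a well-defined Bochner integral. Summing over $k$ and using $\|X(s)-X(t_{k+1})\|_{\mathcal{B}_{-\gamma}} \leq |\pi|^\eta\|X\|_{C^\eta_0}$ on each interval, one finds $\|S_\pi(t)-\tilde Y(t)\|_{\mathcal{B}_\delta} \lesssim |\pi|^{\eta-\gamma-\delta}\|X\|_{C^\eta_0} \to 0$, which simultaneously gives existence of the limit, its independence from the partition sequence, and agreement with $\tilde Y$.

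For the Hölder estimate, I would decompose, for $0\leq s<t\leq T$,
\[
Y(t)-Y(s) = (P(t-s)-I)Y(s) + \int_0^{t-s}P(t-s-r)\,d\tilde X(r), \qquad \tilde X(r) := X(s+r)-X(s),
\]
bound the second term by $C(t-s)^{\eta-\gamma-\delta}$ via the sup estimate already obtained, and bound the first via the standard interpolation inequality $\|(P(h)-I)f\|_{\mathcal{B}_\delta} \lesssim h^\kappa\|f\|_{\mathcal{B}_{\delta+\kappa}}$ combined with the sup estimate applied at regularity $\delta+\kappa$ (permissible since $\kappa < \eta-\gamma-\delta$). Comparing exponents and absorbing a factor of $T^{\eta-\gamma-\delta-\kappa}$ into the constant gives the Hölder bound. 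The main technical obstacle is the algebraic integration by parts identity on each subinterval: this requires careful use of the semigroup relation $P(t-t_k)-P(t-t_{k+1}) = -\int_{t_k}^{t_{k+1}}(-L)P(t-s)\,ds$ and produces the correct residual after combining the boundary terms with the Young integration by parts formula. Once this identity is in hand, the remaining estimates reduce to elementary $\beta$-function calculations driven by the threshold condition $\eta-\gamma-\delta>0$.
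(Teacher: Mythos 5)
The paper does not supply its own proof of this proposition: it is cited from Gubinelli (2004), and the only argument the paper itself records is for the integration-by-parts formula (Proposition~\ref{ibp} in the appendix), which is proved via the same rearrangement of the Riemann sum and a dominated-convergence argument. Your proposal follows essentially the same route, with the mild difference that you fold the construction of $Y$ and the identity of Proposition~\ref{ibp} together by introducing the integrated-by-parts expression $\tilde Y$ as the candidate limit and then proving convergence of the Riemann sums to it at rate $|\pi|^{\eta-\gamma-\delta}$. That is a clean organisation and the quantitative rate is a genuine bonus over merely invoking dominated convergence, since it gives partition-independence for free. Your derivation of the sup bound and the Hölder bound (via $Y(t)-Y(s)=(P(t-s)-I)Y(s)+\int_0^{t-s}P(t-s-r)\,\mathrm d\tilde X(r)$, the interpolation estimate $\|(P(h)-I)f\|_{\mathcal B_\delta}\lesssim h^\kappa\|f\|_{\mathcal B_{\delta+\kappa}}$, and the sup bound applied at regularity $\delta+\kappa<\eta-\gamma$) is correct.

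Two points are worth flagging. First, the displayed ``discrete integration by parts identity'' is stated in terms of the Young integral $\int_{t_k}^{t_{k+1}}P(t-s)\,\mathrm dX(s)$, which is precisely the object whose existence you are trying to establish; as written this is circular. What you actually need, and what the estimate that follows uses, is the purely algebraic rearrangement $S_\pi(t)=P(t)X(t)+\sum_{k}\bigl(P(t-t_{k-1})-P(t-t_k)\bigr)(X(t_k)-X(t))$ followed by $P(t-t_{k-1})-P(t-t_k)=\int_{t_{k-1}}^{t_k}LP(t-s)\,\mathrm ds$; comparing this directly to $\tilde Y(t)$ and estimating the difference term-by-term (including the boundary piece over $[t_{n-1},t]$, where you should use Hölder continuity of $X$ rather than the mesh bound) gives $\|S_\pi(t)-\tilde Y(t)\|_{\mathcal B_\delta}\lesssim|\pi|^{\eta-\gamma-\delta}\|X\|_{C^\eta_0}$ without any circular reference. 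This is exactly the rearrangement the paper performs in the proof of Proposition~\ref{ibp}. Second, your Hölder estimate produces $\|Y\|_{C^\kappa_0([0,T];\mathcal B_\delta)}\lesssim T^{\eta-\gamma-\delta-\kappa}\|X\|_{C^\eta_0}$, i.e.\ a constant that does depend on $T$, whereas the proposition as stated asserts $C_1$ depends only on $\eta,\gamma,\delta,\kappa$. Without a negative spectral bound for the semigroup, the $T$-dependence you obtain is genuinely unavoidable (the sup bound already forces $Y$ to grow like $T^{\eta-\gamma-\delta}$), so this is not a gap in your argument but an imprecision in the quoted statement; it is worth making the exponent $T^{\eta-\gamma-\delta-\kappa}$ explicit, as the paper in fact only uses the $T$-uniformity of $C_2$, not of $C_1$.
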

\begin{remark}
Let $r \geq 1$. Tracing through the proof of Proposition \ref{YoungConv}, we observe that if $$X \in C^\eta([0,T];\mathcal B^2_{-\gamma} \cap \mathcal B^{r+1}_{-\gamma}),$$ then $$Y \in C^\kappa_0([0,T];\mathcal B^2_{\delta} \cap \mathcal B^{r+1}_{\delta}) \subset C^\kappa_0([0,T];L^2(\mathcal O) \cap L^{r+1}(\mathcal O))$$ for $\delta, \kappa$ as in the Proposition \ref{YoungConv}. Note that $L^2(\mathcal O) \cap L^{r+1}(\mathcal O)$ canonically embeds into $\mathcal B^2_{-\gamma} \cap \mathcal B^{r+1}_{-\gamma}$, since this latter space is defined as the completion of $L^2(\mathcal O) \cap L^{r+1}(\mathcal O)$ under the norm $$\norm{(-A)^{-\gamma} \cdot }_{L^2(\mathcal O) \cap L^{r+1}(\mathcal O)} \coloneqq \norm{(-A)^{-\gamma} \cdot }_{L^2(\mathcal O)}  \vee \norm{(-A)^{-\gamma} \cdot }_{L^{r+1}(\mathcal O)},$$ and this completion is generally strictly larger.
\end{remark}
\subsection{Stochastic Nagumo equation driven by Hölder noise}
In what follows, we omit the dependence of the properties of the operator $A$ on the domains $D_p(A)$ (cf. Assumption \ref{AnaSem}) when it is evident from the context. Further, we set $L^p \coloneqq L^p(\mathcal O)$ and let $\langle \cdot , \cdot \rangle$ be the dual pairing on $L^p$, and for ease of notation, define $\mathcal U \coloneqq L^2 \cap L^{r+1}$ and  $\mathcal U_{-\gamma} \coloneqq \mathcal B^2_{-\gamma} \cap \mathcal B^{r+1}_{-\gamma}$. 

\begin{definition} \label{SolDef}
Let $(S(t))_{t \geq 0}$ denote the strongly continuous semigroup generated by the operator $A$ and let $N$ be a Hölder continuous path in $C^\eta_0([0,T];\mathcal U_{-\gamma})$ for $0 \leq \gamma < \eta$. A process $V$ with $$V - v^{TW} \in C([0,T];L^2) \cap L^{r+1}([0,T];L^{r+1})$$ is a solution of equation \eqref{Nag} if $$V(t)-v^{TW}(t) = S(t)\left(V_0-v^{TW}_0\right) + \int_0^t S(t-s)\left(f(V(s))-f(v^{TW}(s))\right) \d s + \varepsilon \int_0^t S(t-s) \d N(s)$$ for all $t \in [0,T]$.
\end{definition}
\begin{remark}
This last identity is indeed well-defined as the conditions on $f$ ensure that $f(V(s))-f(v^{TW}(t))$ can be decomposed into a sum of a number of elements of $L^p$-spaces with differing exponents $1 < p < \infty$, so that application of the semigroup is well-defined.
\end{remark}
\begin{theorem} \label{exuniq}
Let $v_0 \in v^{TW}_0 + L^2$ and $N \in C^\eta_0([0,T];\mathcal U_{-\gamma})$ for $0 \leq \gamma < \eta$ be given. Suppose that the operator $A$ satisfies Assumptions \ref{TranslInv}, \ref{AnaSem} and \ref{SlfAdj} and let the Nemytskii operator $f = f_0 + f_1$ satisfy Assumption \ref{NemytskiiAss}. Then there exists a mild solution to equation \eqref{diffNag}. This solution satisfies $$V-v^{TW} \in C([0,T];L^2) \cap L^{r+1}([0,T];L^{r+1}) \cap L^2([0,T];\mathcal B^2_\delta)$$ for any $0 \leq \delta < \eta - \gamma$ and is the unique mild solution $V$ such that $$V-v^{TW}\in C([0,T];L^2) \cap L^{r+1}([0,T];L^{r+1}),$$ where $r$ denotes the degree of the polynomial nonlinearity $f_0$.
\end{theorem}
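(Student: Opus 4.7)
The plan is to convert the rough-forcing equation into a deterministic, pathwise semilinear Cauchy problem and then construct solutions by Galerkin approximation, exploiting dissipativity of $A$ together with the negative leading coefficient of $f_0$.

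First I would introduce the stochastic convolution
\begin{equation*}
Z_\varepsilon(t) \coloneqq \varepsilon \int_0^t S(t-s)\,\d N(s),
\end{equation*}
which by Proposition \ref{YoungConv} (and the remark thereafter) belongs to $C^\kappa_0([0,T];\mathcal U)$ for every $\kappa \in (0,\eta-\gamma)$. Setting $\tilde U \coloneqq V - v^{TW} - Z_\varepsilon$, the mild formulation in Definition \ref{SolDef} becomes equivalent to the deterministic semilinear equation
\begin{equation*}
\tilde U(t) = S(t)U_0 + \int_0^t S(t-s)\bigl[f\bigl(\tilde U(s) + Z_\varepsilon(s) + v^{TW}(s)\bigr) - f(v^{TW}(s))\bigr]\,\d s,
\end{equation*}
in which $Z_\varepsilon$ is a known forcing of finite $L^{r+1}_{t,x}$ norm and no rough driver remains.

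Next I would construct $\tilde U$ via Faedo-Galerkin approximation, using the spectral decomposition of the self-adjoint operator $-A_2$ afforded by Assumption \ref{SlfAdj} to project onto finite-dimensional invariant subspaces. Testing the $n$-th Galerkin ODE against $\tilde U_n$ in $L^2$, dissipativity of $-A$ contributes the good term $\|(-A)^{1/2}\tilde U_n\|_{L^2}^2$, while the negative leading coefficient of $f_0$ contributes a further good term of order $\|\tilde U_n\|_{L^{r+1}}^{r+1}$. The contribution of $f_1$ (bounded first derivative) and the cross-polynomial pieces $\tilde U_n^{j+1}(Z_\varepsilon + v^{TW})^{r-j}$ for $0 \leq j \leq r-1$ arising in the expansion of $f_0(\tilde U_n + Z_\varepsilon + v^{TW}) - f_0(v^{TW})$ are then absorbed by Young's inequality and $L^\infty$-boundedness of $v^{TW}$ (Assumption \ref{TW}), producing the master estimate
\begin{equation*}
\tfrac{1}{2}\frac{d}{dt}\|\tilde U_n\|_{L^2}^2 + \|(-A)^{1/2}\tilde U_n\|_{L^2}^2 + c\|\tilde U_n\|_{L^{r+1}}^{r+1} \leq C\bigl(1 + \|\tilde U_n\|_{L^2}^2 + \|Z_\varepsilon(t)\|_{L^{r+1}}^{r+1}\bigr).
\end{equation*}
Gronwall then yields uniform bounds in $C([0,T];L^2)\cap L^{r+1}([0,T];L^{r+1}) \cap L^2([0,T];\mathcal B^2_{1/2})$; weak and weak-$\ast$ compactness extract a subsequential limit $\tilde U$, and the weak limit of the nonlinear term is identified by a Minty-type monotonicity argument based on the one-sided Lipschitz inequality $(f(a)-f(b))(a-b) \leq L(a-b)^2$ for all $a,b\in\mathbb R$, which follows from the negative leading coefficient of $f_0$ and the global Lipschitz continuity of $f_1$.

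For uniqueness, two solutions $V_1, V_2$ with the stated regularity and identical data give a difference $D \coloneqq V_1 - V_2$ solving $D(t) = \int_0^t S(t-s)[f(V_1(s)) - f(V_2(s))]\,\d s$; the associated energy identity combined with dissipativity of $A$ and the one-sided Lipschitz bound yields $\tfrac{d}{dt}\|D\|_{L^2}^2 \leq 2L\|D\|_{L^2}^2$, and Gronwall forces $D\equiv 0$. The principal obstacle throughout is handling the mixed polynomial pieces $\tilde U_n^{j+1} Z_\varepsilon^{r-j}$ for intermediate $j$ without exhausting the good dissipation $-c\|\tilde U_n\|_{L^{r+1}}^{r+1}$; this is what requires the $L^{r+1}$-valued regularity of $Z_\varepsilon$ (from the remark following Proposition \ref{YoungConv}) and a careful distribution of exponents in Young's inequality.
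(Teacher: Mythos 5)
Your proposal is correct and follows essentially the same route as the paper: you subtract the pathwise Young convolution $Z_\varepsilon = \varepsilon N_A$ to reduce to a deterministic semilinear Cauchy problem, construct the solution by Faedo--Galerkin with a coercivity estimate that produces the $L^{r+1}_{t,x}$ and $L^2_t\mathcal B^2_{1/2}$ bounds exactly matching Lemma~\ref{moncoer}(2), pass to the limit by a monotonicity/Minty argument (the paper realises this via modified Lions--Magenes lemmas and Thm.~4.2.4 of Liu--Röckner, but the mechanism is the same), and obtain uniqueness from the one-sided Lipschitz bound and Grönwall. The only substantive point you leave implicit but the paper emphasises is \emph{why} Galerkin plus a priori bounds are needed rather than a direct citation of the Liu--Röckner variational existence theorem: for $r>3$ the growth condition (H4$'$) fails, and it is precisely your extra $L^{r+1}_{t,x}$ a priori bound and the resulting $L^{(r+1)/r}_{t,x}$ control on the nonlinearity that compensate.
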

\begin{remark} \label{linearf0}
If $f_0 \equiv 0$ or if $f_0$ is linear, then the problem reduces to the case of a Lipschitz nonlinearity. The above theorem then still holds, now with the trivial choice $r = 1$. In particular,  $L^{r+1}([0,T];L^{r+1}) = L^2([0,T];L^2)$, $\mathcal U = L^2$ and $\mathcal U_{-\gamma} = \mathcal B^2_{-\gamma}$.
\end{remark}

The proof of Theorem \ref{exuniq} proceeds in several steps. Let $N_A \in C^\kappa_0([0,T];\mathcal U_\delta)$ denote the convolution obtained from Proposition \ref{YoungConv}. By subtracting the convolution $N_A$ from candidate solutions of equation \eqref{Nag}, we reduce this problem to a partial differential equation. In order to solve \eqref{Nag}, we show that there exists a unique mild and variational solution $$w \in C([0,T];L^2) \cap L^2([0,T];\mathcal B^2_{1/2}) \cap L^{r+1}([0,T];L^{r+1})$$ to the partial differential equation \begin{equation}\label{varNag}
\partial_t w(t) = A w(t) + f\left(w(t) + N_A(t) + v^{TW}(t)\right) - f\left(v^{TW}(t)\right)
\end{equation}
The proof of existence via a Faedo-Galerkin approximation follows similar arguments as the proof sketched in Ch. 3 of \cite{Temam1997} (Thm. 1.1). For the sake of completeness, we include an overview of the proof in our particular case.

We aim to verify the conditions of Theorem 5.1.3 in \citet{LiuRoeckner2015} in the deterministic case to prove existence of variational solutions with respect to the Gelfand triple $$ \mathcal B^2_{1/2} \hookrightarrow L^2 \hookrightarrow ( \mathcal B^2_{1/2})^\ast.$$ For $X \colon [0,T] \times \mathcal O \rightarrow \mathbb R$ define $$F_X(t,u) \coloneqq f\left(u + X(t) + v^{TW}(t) \right) - f\left(v^{TW}(t)\right),~ t \in [0,T], ~u \colon \mathbb R \rightarrow \mathbb R$$ 
We note the following useful properties of the operator $F$, which can be verified by direct calculation.
\begin{lemma} \label{moncoer}
There exist some generic constants $K, C > 0$ such that for any $X \colon [0,T] \rightarrow \mathcal U$ and arbitrary $t \in [0,T]$, $u,v \in \mathcal B_{1/2}$,
\begin{enumerate}
    \item[(1)] $\langle F_X(t,u) - F_X(t,v),u-v\rangle \leq \mathrm{Lip}_f \norm{u-v}_{L^2},$ \label{test}
    \item[(2)] $\langle F_X(t,u),u\rangle \leq - K\norm{u}^{r+1}_{L^{r+1}} + C \norm{u}^2_{L^2} + C \norm{X(t)}^2_{L^2} + C \norm{X(t)}^{r+1}_{L^{r+1}},
    $
\end{enumerate}
where $\text{Lip}_f$ denotes the one-sided Lipschitz constant of $f$.
\end{lemma}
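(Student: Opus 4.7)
The plan is to prove the two estimates separately, in both cases exploiting the decomposition $f = f_0 + f_1$ and the fact that $v^{TW} \in L^\infty(\mathcal O)$ (Assumption \ref{TW}).

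For estimate (1), the key observation is that $f$ has a pointwise upper bound on its derivative. Indeed, $f_0'$ is a polynomial of even degree $r-1$ with negative leading coefficient, hence $\sup_{x \in \mathbb R} f_0'(x) < \infty$, and $f_1'$ is bounded by assumption. Setting $\mathrm{Lip}_f \coloneqq \sup_{x \in \mathbb R} f'(x) < \infty$, a mean value argument gives
\[
f(u + X + v^{TW}) - f(v + X + v^{TW}) = \int_0^1 f'\bigl(\lambda u + (1-\lambda) v + X + v^{TW}\bigr)\,\d\lambda \cdot (u-v),
\]
and pairing with $u - v$ and integrating in $x$ yields (1) (with the quadratic $L^2$-norm on the right-hand side, as the notation suggests).

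For estimate (2), I split $F_X = F_X^{(0)} + F_X^{(1)}$ according to the decomposition $f = f_0 + f_1$. The Lipschitz part $F_X^{(1)}$ gives, by the Lipschitz property of $f_1$ and Cauchy--Schwarz plus Young's inequality,
\[
\langle F_X^{(1)}(t,u), u \rangle \leq \mathrm{Lip}_{f_1}\,\bigl(\norm{u}_{L^2} + \norm{X(t)}_{L^2}\bigr)\,\norm{u}_{L^2} \leq C\bigl(\norm{u}_{L^2}^2 + \norm{X(t)}_{L^2}^2\bigr).
\]
For the polynomial part, write $f_0(y) = -c\,y^r + \sum_{k=0}^{r-1} a_k y^k$ with $c > 0$, expand $f_0(u + X + v^{TW})$ and multiply by $u$. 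The leading contribution is the term $-c\,u^{r+1}$ coming from the pure $u^r \cdot u$ product; the goal is to absorb every remaining cross term using Young's inequality, taking advantage of $v^{TW} \in L^\infty(\mathcal O)$ to treat $v^{TW}$ as a bounded coefficient. Every mixed monomial in the expansion has the form $C\,u^j X^k (v^{TW})^\ell$ with $j + k + \ell \leq r+1$ and $j \leq r$, and splits via
\[
|u|^{j}|X|^{k} \leq \tfrac{\epsilon}{2}\,|u|^{r+1} + C_\epsilon\,|X|^{(r+1)k/(r+1-j)},
\]
for small enough $\epsilon$. The exponent of $|X|$ is bounded by $r+1$, and after integration each such term is controlled by $\norm{X(t)}_{L^2}^2 + \norm{X(t)}_{L^{r+1}}^{r+1}$ (with constants absorbed into $C$), while the $\norm{u}_{L^{r+1}}^{r+1}$-contributions are absorbed into the leading $-c\,\norm{u}_{L^{r+1}}^{r+1}$ term, leaving some $K > 0$ in front. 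The subtracted $-f_0(v^{TW})$ contributes only a bounded function paired with $u$, giving a further $C(\norm{u}_{L^2}^2 + 1)$-type term which can be merged with the existing constants (implicitly included in the generic constant $C$).

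The only real subtlety is bookkeeping: ensuring that the parameter $\epsilon > 0$ in Young's inequality is chosen small enough, uniformly over the finite collection of cross terms, so that a strictly positive constant $K$ survives in front of $\norm{u}_{L^{r+1}}^{r+1}$. Since there are only finitely many such terms (depending on $r$) and each individual bound is immediate from Young's inequality with exponents determined by the monomial degrees, this bookkeeping is routine.
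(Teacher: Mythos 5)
Your proof of (1) is fine, including the observation that the right-hand side should carry a square on the $L^2$-norm (a typo in the statement). For (2), the overall strategy is the right one, but two steps fail on the unbounded domains the paper works with. First, you estimate the subtracted $-f_0(v^{TW})$ as ``a bounded function paired with $u$'', but $f_0(v^{TW})$ does not vanish at infinity in general — the wave connects zeros of $f=f_0+f_1$, not of $f_0$ alone — so $f_0(v^{TW})\notin L^2(\mathcal O)$, while $u\in L^2\cap L^{r+1}$ need not lie in $L^1(\mathcal O)$; hence $\int_{\mathcal O} f_0(v^{TW})\,u\,\mathrm dx$ is simply not defined, let alone of size $C(\norm{u}_{L^2}^2+1)$. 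The term is saved by \emph{exact cancellation}, not by a size bound: the $u$-linear, $X$-free monomials in the expansion of $f_0(u+X+v^{TW})\cdot u$ sum precisely to $u\,f_0(v^{TW})$, which cancels against the subtracted piece. Taylor-expanding $f_0$ around $v^{TW}$ makes this transparent from the start,
\[
f_0(u+X+v^{TW}) - f_0(v^{TW}) = \sum_{k=1}^{r}\frac{f_0^{(k)}(v^{TW})}{k!}(u+X)^k,
\]
with bounded coefficients (polynomial in $v^{TW}\in L^\infty$) and the $k=r$ term giving exactly $-a(u+X)^r$.

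Second, the Young split $|u|^j|X|^k \le \tfrac{\epsilon}{2}|u|^{r+1} + C_\epsilon |X|^{k(r+1)/(r+1-j)}$ can put an exponent as low as $(r+1)/r<2$ on $|X|$, e.g. for $(j,k)=(1,1)$. On an unbounded domain $L^2\cap L^{r+1}\not\subset L^p$ for $p<2$, so $\norm{X}_{L^p}^p$ is \emph{not} controlled by $\norm{X}_{L^2}^2 + \norm{X}_{L^{r+1}}^{r+1}$ and your right-hand side can be infinite; the estimate is then vacuous. The fix is to split by total degree: for a monomial $|u|^j|X|^k$ with $j,k\ge 1$ and $2\le j+k\eqqcolon n\le r+1$ use $|u|^j|X|^k\le\tfrac{j}{n}|u|^n+\tfrac{k}{n}|X|^n$, then interpolate $|u|^n\le\epsilon|u|^{r+1}+C_\epsilon|u|^2$ and bound $|X|^n\le|X|^2+|X|^{r+1}$ pointwise; for $k=0$ and $2\le j\le r$ interpolate $|u|^j$ in the same way. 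This keeps every exponent in $[2,r+1]$ and, after choosing the finitely many $\epsilon$'s small enough, yields the stated coercivity bound. So the approach is essentially the intended ``direct calculation'', but the bookkeeping is not routine in the way you claim: both the cancellation and the choice of Young exponents must be made carefully.
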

\begin{lemma}\label{exvarnag}
Suppose that $X \in L^2([0,T];L^2) \cap L^{r+1}([0,T];L^{r+1})$. Then there exists a unique variational solution $$w \in C([0,T];L^2) \cap L^{r+1}([0,T];L^{r+1})\cap L^2([0,T];\mathcal B^2_{1/2})$$ of the equation $$\partial_t w(t) = A w(t) + F_X(t,w(t))$$ with respect to the Gelfand triple $\mathcal B^2_{1/2} \hookrightarrow L^2 \hookrightarrow (\mathcal B^2_{1/2})^\ast$. In particular, there exists a unique variational solution of equation \eqref{varNag}.
\end{lemma}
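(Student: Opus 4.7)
The plan is to verify the conditions (H1)-(H4) of Theorem 5.1.3 in Liu-Röckner, applied in the deterministic setting, for the time-dependent operator $\mathcal A(t, u) \coloneqq A u + F_X(t, u)$ interpreted as a family of operators $V \to V^\ast$ with $V \coloneqq \mathcal B^2_{1/2}$ and Gelfand triple $V \hookrightarrow L^2 \hookrightarrow V^\ast$. A preliminary check is that $\mathcal A(t, \cdot)$ indeed maps $V$ into $V^\ast$ for a.e.\ $t \in [0,T]$: the linear part $A$ is bounded $V \to V^\ast$ by sectoriality (Assumption \ref{AnaSem}), while the embedding $V \hookrightarrow L^{r+1}$ guaranteed by the interpolation inequality \eqref{IntPlIneq} combined with the polynomial structure of $f$ yields $F_X(t, \cdot) \colon V \to L^{(r+1)/r} \hookrightarrow V^\ast$ whenever $X(t) \in L^{r+1}$.

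For (H1) hemicontinuity, continuity of the Nemytskii operator $f \colon L^{r+1} \to L^{(r+1)/r}$ combined with linearity of $A$ suffices. For (H2) weak monotonicity I combine Lemma \ref{moncoer}(1) with the dissipativity $\langle A(u-v), u-v\rangle \leq 0$ of the self-adjoint sectorial operator $A$. For (H3) coercivity, Lemma \ref{moncoer}(2) together with $\langle Au, u\rangle \leq -\norm{(-A)^{1/2}u}^2_{L^2}$ — which by Assumption \ref{SlfAdj} is comparable, up to constants, to $-\norm{u}^2_V + \norm{u}^2_{L^2}$ — yields
\begin{equation*}
\langle \mathcal A(t,u), u\rangle \leq -\tfrac12 \norm{u}^2_V - K\norm{u}^{r+1}_{L^{r+1}} + C\bigl(1 + \norm{u}^2_{L^2} + \norm{X(t)}^2_{L^2} + \norm{X(t)}^{r+1}_{L^{r+1}}\bigr),
\end{equation*}
with the inhomogeneous terms integrable in time by the assumption on $X$. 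For (H4) the growth bound on $\norm{\mathcal A(t, u)}_{V^\ast}$ follows by interpolation and the embedding $V \hookrightarrow L^{r+1}$, reducing the estimation of the nonlinear piece to an $L^{r+1}$ bound that is controlled by $\norm{u}_V$ and $\norm{u}_{L^2}$.

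Applying the theorem yields a unique variational solution $w \in C([0,T]; L^2) \cap L^2([0,T]; V)$, and the extra integrability $w \in L^{r+1}([0,T]; L^{r+1})$ is a direct consequence of the coercivity: applying the variational chain rule to $\norm{w(t)}^2_{L^2}$ and integrating in time, the $-K\norm{w}^{r+1}_{L^{r+1}}$ term translates into a finite upper bound for $\int_0^T \norm{w}^{r+1}_{L^{r+1}} \d t$. Uniqueness can alternatively be read off directly from Lemma \ref{moncoer}(1) via Gronwall applied to the difference of two candidate solutions. The main technical obstacle lies in verifying (H4) and the $V \to V^\ast$ mapping property simultaneously: the polynomial growth of $f_0$ of degree $r$ must be counterbalanced by the regularity provided by $V$, and this is precisely where the Gagliardo-Nirenberg-type inequality \eqref{IntPlIneq} from Assumption \ref{IntPl} is used essentially.
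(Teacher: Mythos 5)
Your plan hinges on verifying condition (H4) (or (H4')) of Theorem 5.1.3 in Liu--R\"ockner, but this is exactly the step that fails, and the paper says so explicitly: \emph{``we cannot apply that theorem directly, since for $f(x) \asymp -x^r$ with $r > 3$, the growth condition (H4') is not satisfied.''} Concretely, the nonlinear part gives $\norm{F_X(t,u)}_{V^*} \lesssim \norm{u}^r_{L^{r+1}} + \text{lower order}$, and the interpolation inequality \eqref{IntPlIneq} only converts this into $\norm{u}_V^{r\theta_{r+1}} \norm{u}_{L^2}^{r(1-\theta_{r+1})}$. Since $\alpha = 2$ is the coercivity exponent, condition (H4') requires control of $\norm{F_X(t,u)}_{V^*}^2$ by $(f(t) + C\norm{u}_V^2)(1 + \norm{u}_H^\beta)$, which forces $r\theta_{r+1} \leq 1$. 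Assumption \ref{IntPl} requires $\theta_p < 2/p$ only for $3 \leq p \leq r$, deliberately \emph{not} for $p = r+1$; already for the one-dimensional Laplacian, where $\theta_{r+1} = \tfrac{r-1}{2(r+1)}$, the inequality $r\theta_{r+1} \leq 1$ fails for $r \geq 5$, and a version of (H4') with the proper normalisation already fails for $r > 3$. So your step ``For (H4) the growth bound \dots follows by interpolation'' is the gap: the nonlinearity simply grows too fast in $\norm{u}_V$ to satisfy the $V^*$-boundedness condition.

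The paper's actual argument circumvents this. It uses Theorem 3.1.1 in Liu--R\"ockner (which does not need (H4')) to produce finite-dimensional Faedo--Galerkin approximants $(w_n)$, then leverages the coercivity estimate from Lemma \ref{moncoer}(2) to obtain the a priori bound $\sup_t \norm{w_n(t)}^2_{L^2} + \int_0^T \norm{w_n}^{r+1}_{L^{r+1}} + \int_0^T \norm{(-A)^{1/2}w_n}^2_{L^2} \leq C$ uniformly in $n$. This is what provides the $L^{r+1}([0,T];L^{r+1})$ control \emph{a priori} rather than a posteriori as you suggest. It then observes that the nonlinearity along the approximants is bounded in the dual space $L^{\frac{r+1}{r}}([0,T];L^{\frac{r+1}{r}}) = (L^{r+1}([0,T];L^{r+1}))^*$, extracts weakly convergent subsequences, and identifies the limit as a variational solution after a modification of the Lions--Magenes lemma, imitating the proof of Theorem 4.2.4 in Liu--R\"ockner rather than invoking its statement. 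Your concluding remark about the $L^{r+1}$-integrability is also circular: you first need a solution to apply the variational chain rule, whereas in the paper this integrability is a uniform a priori estimate on the Galerkin approximations, used to establish existence in the first place. Your uniqueness argument via Lemma \ref{moncoer}(1) and Gronwall is fine and coincides with the paper's.
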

\begin{proof}
Even though identities (1) and (2) from Lemma \ref{moncoer} entail the necessary hemicontinuity, monotonicity and coercivity conditions (H2) and (H3) of e.g. Theorem 5.1.3 in \cite{LiuRoeckner2015}, we cannot apply that theorem directly, since for $f(x) \asymp -x^r$ with $r > 3$, the growth condition (H4') is not satisfied.

However, by Thm. 3.1.1. in \cite{LiuRoeckner2015}, these properties suffice to obtain the existence of finite dimensional Galerkin approximations $(w_n)_{n \in \mathbb N}$ with respect to some orthonormal basis of $L^2$. Further, by inequality \ref{moncoer}.(2), there exists a constant $C$ only dependent on $$\int_0^T  \norm{X(t)}^2_{L^2} + \norm{X(t)}^{r+1}_{L^{r+1}} \d t$$ such that $$\sup_{t \in [0,T]} \norm{w_n(t)}^2_{L^2} + \int_0^T \norm{w_n(t)}^{r+1}_{L^{r+1}} \d t + \int_0^T \norm{(-A)^{1/2}w_n(t)}^2_{L^2} \d t \leq C.$$ With this stronger a priori inequality, we can compensate for the lack of direct bounds on the nonlinearity $F_X$. We  observe that the bound $$\norm{F_X(-,w_n(-))}_{(L^{r+1}([0,T];L^{r+1}))^\ast} = \norm{F_X(-,w_n(-)}_{L^\frac{r+1}{r}([0,T];L^\frac{r+1}{r})} \leq K$$ follows for some $K$ independent of $n$. Hence we can extract weakly convergent subsequences and, after minor modifications of the Lions-Magenes lemma (Lemmas 4.2.5. and 4.2.6 in \cite{LiuRoeckner2015}), we can imitate the proof of Thm. 4.2.4 in \cite{LiuRoeckner2015} to show that the weak limit is in fact a variational solution of equation \eqref{varNag}.

Uniqueness follows by a Grönwall argument after utilising the monotonicity property of the nonlinearity $F_X$.
\end{proof}
\begin{proof}[Existence of solutions of equation \eqref{Nag}]
The obtained variational solutions to equation \eqref{varNag} are in fact mild solutions; the verification is standard (cf. \cite{hairer2023introductionstochasticpdes}, Ch. 5) and we omit it for the sake of brevity. Therefore we find that almost surely, $$w(t) = S(t)U_0 + \int_0^t S(t-s)(F_{N_A}(s,w(s))) \d s.$$ 
Let $$V_0 \in v^{TW}_0 + L^2$$ be given and $w(t)$ be the mild solution of equation \eqref{varNag}. Summing up, we see that $V(t) \coloneqq w(t) + v^{TW}(t) + \varepsilon N_A(t)$ satisfies the equation $$V(t) - v^{TW}(t) = S(t)V_0 + \int_0^t S(t-s)\left(f(V(s))-f(v^{TW}(s)\right)\d s +  \varepsilon \int_0^t S(t-s)\d N(s).$$ Therefore, the process $V(t)$ is a mild solution (cf. Definition \ref{SolDef}) of equation \eqref{Nag}. As $V - v^{TW} = w + \varepsilon N_A$, the regularities of $w$ and $N_A$ imply that $$V-v^{TW} \in C([0,T];L^2) \cap L^{r+1}([0,T];L^{r+1}) \cap L^2([0,T];\mathcal B^2_\delta)$$ for any $0 < \delta < \eta - \gamma$. Uniqueness of the solution which satisfies $$V-v^{TW} \in C([0,T];L^2) \cap L^{r+1}([0,T];L^{r+1})$$ follows by Lemma \ref{exvarnag}, as $$w \coloneqq V - v^{TW} - \varepsilon N_A$$ can then be shown to be a variational solution of equation \eqref{varNag} for $X = \varepsilon N_A$.
\end{proof} 
\section{Pathwise stability for small perturbations} \label{pathstab}
In this section, we aim to prove pathwise stability results for travelling waves perturbed by Hölder continuous noise. In this context, stability is measured by distance to some spatial translate of the travelling wave shape $v_0^{TW}$. 
Let $\Gamma \coloneqq \Set{v^{TW}_0(\cdot + \phi\nu);\phi \in \mathbb R}$ denote the set of travelling wave fronts and $V$ be a mild solution of equation \eqref{Nag} with $V(0) = v^{TW}_0$ (understood in the sense of Definition \ref{SolDef}). For $u \in v^{TW}_0 + L^2$, define $$d(u, \Gamma) = \inf_{\phi \in \mathbb R} \norm{u - v^{TW}_\phi}_{L^2(\mathcal O)} < \infty.$$ 
\begin{definition}
We say that the travelling wave solution $v^{TW}$ is \textit{stable} under the influence of small noise amplitudes if, given any $\delta > 0$, the solution $V$ satisfies $$\sup_{0 \leq t\leq T}d(V(t),\Gamma) < \delta$$ for any small enough noise amplitude $\varepsilon > 0$.
\end{definition}

\subsection{Deterministic phase adaptation}
In this subsection, we derive existence of a real-valued stochastic process $C(t)$ which approximates a phase $\phi$ for which $\norm{u - v^{TW}_\phi}_{L^2(\mathcal O)}$ is minimal.
We follow \cite{StannatTW} and \cite{KruegerStannat} and introduce a gradient-descent type ODE into the direction of local minima of $$C \mapsto \norm{V(t) - v^{TW}_C}^2_{L^2},$$  where $\nu$ denotes the direction of wave propagation. This means that we update our estimates $C_n$ via $$C_{n+1} - C_n =  m(t_{i+1}-t_i) \left \langle V(t_i) - v^{TW}_{C_n}, \nu \cdot \nabla v^{TW}_{C_n} \right \rangle.$$ If we take the limit $\Delta t \to 0$ and account for the wave speed $c$, we arrive at the ordinary differential equation \begin{equation}\label{GradDesc}
C'(t) = c + m\langle V(t) - v^{TW}_{C(t)}, \nu \cdot \nabla v^{TW}_{C(t)} \rangle
\end{equation} with $C(0) = 0$ and $m > 0$. 

\begin{proposition}
For every solution $V(t)$ of equation \eqref{Nag} that satisfies $$V-v^{TW} \in C([0,T];L^2) \cap L^{r+1}([0,T];L^{r+1})\cap L^2([0,T];\mathcal B^2_\delta) $$ for some $\delta > 0$, there exists a unique solution $C \colon [0,T] \rightarrow \mathbb R$ of equation \eqref{GradDesc}.
\end{proposition}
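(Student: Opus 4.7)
The plan is to recast \eqref{GradDesc} as a scalar Carathéodory ODE $C'(t) = G(t, C(t))$ with $G(t, C) := c + m\langle V(t) - v^{TW}_C, \nu \cdot \nabla v^{TW}_C\rangle$ and $C(0) = 0$, and then apply a global Picard--Lindelöf type argument. The key technical steps are to show that $G$ is well-defined on all of $[0, T] \times \mathbb{R}$, continuous, Lipschitz in $C$ with a time-integrable constant, and at most linearly growing in $|C|$, so that no finite-time blowup can occur.

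First, I would decompose $V(t) = v^{TW}(t) + U(t)$ with $U = V - v^{TW} \in C([0, T]; L^2) \cap L^2([0, T]; \mathcal{B}^2_\delta)$, splitting $G = c + m(g_1 + g_2)$, where
$$g_1(t, C) := \langle U(t), \nu \cdot \nabla v^{TW}_C\rangle, \qquad g_2(t, C) := \langle v^{TW}_{ct} - v^{TW}_C, \nu \cdot \nabla v^{TW}_C\rangle.$$
Assumption \ref{Intgrbl} provides $\nu \cdot \nabla v^{TW}_0 \in L^2$, and translation invariance keeps $\|\nu \cdot \nabla v^{TW}_C\|_{L^2}$ independent of $C$. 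For $g_2$, the fundamental-theorem identity $v^{TW}_{ct} - v^{TW}_C = -\int_C^{ct} \nu \cdot \nabla v^{TW}_s\, ds$ combined with translation invariance of the inner product yields the representation
$$g_2(t, C) = -\int_0^{ct - C} \langle \nu \cdot \nabla v^{TW}_u, \nu \cdot \nabla v^{TW}_0\rangle\, du,$$
which is $C^1$ in $C$ with derivative uniformly bounded by $\|\nu \cdot \nabla v^{TW}_0\|_{L^2}^2$. For $g_1$, Cauchy--Schwarz yields the bound $|g_1(t, C)| \leq \|U(t)\|_{L^2}\|\nu \cdot \nabla v^{TW}_0\|_{L^2}$, and the change of variables $g_1(t, C) = \langle U(t)(\cdot + C\nu), \nu \cdot \nabla v^{TW}_0\rangle$ together with the strong continuity of the translation group on $L^2$ gives continuity of $g_1$ in $C$. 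Combined with the continuity of $U$ in $t$, this makes $G$ continuous on $[0, T] \times \mathbb{R}$. The triangle inequality $\|V(t) - v^{TW}_C\|_{L^2} \leq \|U(t)\|_{L^2} + |ct - C|\,\|\nu \cdot \nabla v^{TW}_0\|_{L^2}$ further gives the at-most-linear growth bound $|G(t, C)| \leq |c| + m(\|U(t)\|_{L^2} + \|\nu \cdot \nabla v^{TW}_0\|_{L^2}|ct - C|)\|\nu \cdot \nabla v^{TW}_0\|_{L^2}$, precluding finite-time blowup via Grönwall.

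For Lipschitz continuity in $C$, the $g_2$ contribution is immediate from the $C^1$ representation above. For $g_1$, I would formally differentiate to get $\partial_C g_1(t, C) = -\langle U(t), (\nu \cdot H(v^{TW}_0) \cdot \nu)(\cdot - C\nu)\rangle$. In the standing-wave case $c = 0$, Assumption \ref{Intgrbl} directly yields $\nu \cdot H(v^{TW}_0) \cdot \nu \in L^2$, so Cauchy--Schwarz gives a uniform bound $|\partial_C g_1(t, C)| \leq \|U(t)\|_{L^2}\|\nu \cdot H(v^{TW}_0) \cdot \nu\|_{L^2}$. In the travelling-wave case, I would exploit the travelling-wave equation $-c\nu \cdot \nabla v^{TW}_0 = Av^{TW}_0 + f(v^{TW}_0)$ (which expresses $\partial_\nu v^{TW}_0$ in terms of $A$-regular quantities) together with the additional regularity of $U$ in $\mathcal{B}^2_\delta$ for a.e. $t$, to interpret the pairing via duality in the Banach scale $(\mathcal{B}^2_\gamma)_\gamma$, obtaining a Lipschitz constant that is in $L^1([0, T])$. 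Once Lipschitz-in-$C$ with $L^1$-integrable constant is established, the Carathéodory existence-and-uniqueness theorem yields a unique local solution, and the linear growth bound extends it to all of $[0, T]$.

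The main obstacle is the Lipschitz bound for $g_1$ in the travelling-wave case $c \neq 0$, because Assumption \ref{Intgrbl} provides $L^2$-regularity of the second directional derivative only when $c = 0$. The key trick is to shift the derivative onto $U$ via a duality argument in the scale $(\mathcal{B}^2_\gamma)_\gamma$, using the extra temporal regularity $U \in L^2([0, T]; \mathcal{B}^2_\delta)$ supplied by Proposition \ref{YoungConv} and the partial regularity of $\partial_\nu v^{TW}_0$ inherited from differentiating the travelling-wave equation in the $\nu$-direction.
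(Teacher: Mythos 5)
Your overall strategy matches the paper's: reformulate the ODE, establish Lipschitz continuity in the phase variable, split into the standing-wave case (handled by the Hessian bound from Assumption \ref{Intgrbl}) and the travelling case $c\neq 0$ (handled by moving derivatives onto $U$ using $U\in L^2([0,T];\mathcal B^2_\delta)$ and the Banach scale), and then apply a Picard--Lindel\"of/Carath\'eodory argument. So the skeleton is right. However, there is a genuine gap in the $c\neq 0$ branch, and it arises precisely where your argument becomes vague.

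First, your formal derivative $\partial_C g_1(t,C)=-\langle U(t), (\nu\cdot H(v^{TW}_0)\cdot\nu)(\cdot-C\nu)\rangle$ is not available when $c\neq 0$: Assumption \ref{TW} only gives $v^{TW}_0\in C^1$, and Assumption \ref{Intgrbl} only provides $\nu\cdot H(v^{TW}_0)\cdot\nu\in L^2$ in the case $c=0$. For $c\neq0$ the Hessian is not assumed to exist, so you must work with finite differences $v^{TW}_{h(s)}-v^{TW}_{g(s)}$ rather than a literal second derivative; that is what the paper does. Second, and more substantively, after you use the wave equation and self-adjointness of $A$ to split the pairing as $\frac1c\langle (-A)^\delta U(s),(-A)^{1-\delta}(v^{TW}_{h(s)}-v^{TW}_{g(s)})\rangle$ plus a benign $f$-term, you still need the Lipschitz estimate
$$
\norm{(-A)^{1-\delta}\bigl(v^{TW}_{x_1}-v^{TW}_{x_2}\bigr)}_{L^2}\leq C\,|x_1-x_2|,
$$
and this does not follow from ``partial regularity of $\partial_\nu v^{TW}_0$ inherited from differentiating the travelling-wave equation.'' Differentiating the wave equation once more in the $\nu$-direction would again require a second derivative you do not have, and nothing in the assumptions puts $\nu\cdot\nabla v^{TW}_0$ in $\mathcal B^2_{1-\delta}$ directly. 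The paper closes this gap by observing that $v^{TW}_{x_1+ct}-v^{TW}_{x_2+ct}$ solves a linear parabolic equation, hence admits the mild representation
$$
v^{TW}_{x_1}-v^{TW}_{x_2}=S(1)\bigl(v^{TW}_{x_1-c}-v^{TW}_{x_2-c}\bigr)+\int_0^1 S(1-s)\bigl(f(v^{TW}_{x_1-c(1-s)})-f(v^{TW}_{x_2-c(1-s)})\bigr)\d s,
$$
and then invoking the analytic smoothing estimate $\norm{(-A)^{1-\delta}S(t)u}_{L^2}\leq M\,t^{-(1-\delta)}\norm{u}_{L^2}$ together with the $L^2$ bound $\norm{v^{TW}_{x_1}-v^{TW}_{x_2}}_{L^2}\leq\norm{\nu\cdot\nabla v^{TW}_0}_{L^2}|x_1-x_2|$. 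You should supply this step (or an equivalent one) explicitly; without it, your ``duality in the Banach scale'' claim is not justified under the stated assumptions.
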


\begin{proof}
The proof proceeds by means of a Picard iteration for successive short enough subintervals of $[0,T]$, i.e. one shows that the map $F \colon C([t_1,t_2]) \rightarrow C([t_1,t_2])$ with $$F(h) =  C_0 + \int_{t_1}^\cdot c + m\left \langle V(s) - v^{TW}_{h(s)}, \nu \cdot \nabla v^{TW}_{h(s)} \right \rangle \d s$$ is a strict contraction on $C([t_1,t_2])$ for arbitrary $C_0 \in \mathbb R$ and $|t_2-t_1|$ small enough. 
Observe that by translation invariance, $$\left\langle v^{TW}_{cs} - v^{TW}_{h(s)}, \nu \cdot \nabla v^{TW}_{h(s)} \right \rangle = \left \langle v^{TW}_{cs +(cs - h(s))} - v^{TW}_{cs}, \nu \cdot \nabla v^{TW}_{cs} \right \rangle,$$
and hence
$$
\begin{aligned}
F(h)(t)-F(g)(t) &= m\int_{t_1}^{t} \left\langle V(s) - v^{TW}_{cs}, \nu \cdot \nabla (v^{TW}_{h(s)} - v^{TW}_{ g(s)})\right\rangle \d s \\ &\quad -m\int_{t_1}^{t} \left\langle v^{TW}_{cs+(cs-h(s))} - v^{TW}_{cs +(cs-g(s))}, \nu \cdot \nabla v^{TW}_{cs} \right\rangle \d s.
\end{aligned}
$$
Using 
$$
\norm{v^{TW}_{2cs - h(s)} - v^{TW}_{2cs -g(s)}}_{L^2} \leq \norm{\nu \cdot \nabla v^{TW}_0}_{L^2} 
|h(s) - g(s)| 
$$
and Assumption \ref{Intgrbl} on $\nu\cdot\nabla v^{TW}_0$, we obtain global Lipschitz continuity of 
the second term. 

To obtain the Lipschitz continuity of the first term we can similarly estimate 
$$
\norm{\nu\cdot\nabla (v^{TW}_{2cs - g(s)} - v^{TW}_{2cs -h(s)})}_{L^2}  
\leq \norm{\nu \cdot H(v^{TW}_0) \cdot \nu}_{L^2} |g(s) - h(s)| 
$$
using the Hessian of $v^{TW}_0$ in the case $c=0$. 

In the case where $c \neq 0$ we can drop the assumption on the Hessian, using that
$$ 
\begin{aligned}
\left\langle V(s) - v^{TW}_{cs}, \nu \cdot \nabla (v^{TW}_{h(s)} - v^{TW}_{ g(s)})\right \rangle  
&= -\frac 1c \left\langle V(s) - v^{TW}_{cs}, A (v^{TW}_{h(s)} - v^{TW}_{ g(s)}) + 
\left(f(v^{TW}_{h(s)})-f(v^{TW}_{g(s)})\right)\right\rangle\\
&= \frac 1c \left\langle (-A)^\delta (V(s) - v^{TW}_{cs}), (-A)^{1-\delta} (v^{TW}_{h(s)} - v^{TW}_{ g(s)})\right\rangle \\ &\quad -\frac 1c \left\langle V(s) - v^{TW}_{cs},
f(v^{TW}_{h(s)})-f(v^{TW}_{g(s)})\right\rangle
\end{aligned}  
$$
for $0 < \delta < \eta - \gamma$. Now, again by Assumption \ref{Intgrbl}, we get the Lipschitz estimate$$\norm{f(v^{TW}_{h(s)})-f(v^{TW}_{g(s)})} \leq \norm{f'(v^{TW}_0)}_{L^\infty(\mathcal O)}\norm{\nu \cdot \nabla v^{TW}_0}_{L^2(\mathcal O)}|h(s)-g(s)|.$$ It remains to find a Lipschitz estimate on $$\norm{(-A)^{1-\delta} (v^{TW}_{h(s)} - v^{TW}_{g(s)})}_{L^2}.$$ Then, since $V - v^{TW} \in L^2([0,T];\mathcal B^2_\delta)$ for any $0 < \delta < \eta - \gamma$, we can prove that $F$ in fact defines a strict contraction on $C([0,T])$.
By definition of a travelling wave solution, $$\partial_t v^{TW}_{x + ct} = Av^{TW}_{x + ct} + f(v^{TW}_{x + ct})$$ for any $x \in \mathbb R$. This in turn implies that for arbitrary $x_1$ and $x_2$, the difference $v^{TW}_{x_1+ct} - v^{TW}_{x_2+ct}$ satisfies the partial differential equation $$\partial_t(v^{TW}_{x_1+ct} - v^{TW}_{x_2+ct}) = A(v^{TW}_{x_1+ct} - v^{TW}_{x_2+ct}) + f(v^{TW}_{x_1+ct}) - f(v^{TW}_{x_2+ct}) .$$ Using Assumption \ref{AnaSem}, we can therefore find the mild representation $$v^{TW}_{x_1+ct} - v^{TW}_{x_2+ct} = S(t)(v^{TW}_{x_1} - v^{TW}_{x_2})+\int_0^t S(t-s)\left(f(v^{TW}_{x_1+cs}) - f(v^{TW}_{x_2+cs})\right)\d s$$ and in particular,$$v^{TW}_{x_1} - v^{TW}_{x_2} = S(1)(v^{TW}_{x_1-c} - v^{TW}_{x_2-c}) + \int_0^1 S(1-s)\left(f(v^{TW}_{x_1-c(1-s)}) - f(v^{TW}_{x_2-c(1-s)})\right) \d s.$$ Now Bochner's inequality and the generic semigroup estimate $$\norm{(-A)^s S(t) u}_{L^2} \leq M \frac{\norm{u}_{L^2}}{t^s},~ u \in L^2$$ imply that $$\norm{(-A)^{1-\delta}(v^{TW}_{x_1} - v^{TW}_{x_2})}_{L^2} \leq M \norm{\nu \cdot \nabla v^{TW}_0}_{L^2(\mathcal O)} \left(1+\frac{\norm{f'(v^{TW}_0)}_{L^\infty(\mathcal O)}}{\delta}\right)|x_1-x_2|.$$ Inserting $x_1 = h(s)$ and $x_2 = g(s)$ and choosing $|t_1-t_2|$ small enough now finishes the proof.
\end{proof}

\subsection{First order approximation of residual}
In the following, let $C(t)$ denote the unique solution of the phase-adaptation ODE \eqref{GradDesc} and set $$\tilde v^{TW}(t) = v^{TW}_{C(t)}.$$ Our aim now is to analyse the residual process $\tilde U(t) \coloneqq V(t) - \tilde v^{TW}(t).$ To demonstrate that the fluctuations of the paths of the driver $N$ dominate the dynamics of the error term $\tilde U(t)$ for small noise amplitudes, we decompose the error into two terms $$\tilde U(t) = \varepsilon Z_\varepsilon(t) + y_\varepsilon(t).$$ Here, heuristically, $Z_\varepsilon$ denotes an Ornstein-Uhlenbeck process which approximates $\tilde U$ to the first order, and $y_\varepsilon$ denotes the nonlinear residual.

To state the main result of this subsection, we need some preliminary definitions. We introduce the family of linear operators $$A(t) \colon D(A) \subset L^2(\mathcal O) \rightarrow L^2(\mathcal O)$$ 
defined by $$A(t)u \coloneqq Au + f'\left(\tilde v^{TW}(t) \right)u -\tilde P(t)u,$$ where $$\tilde P(t) u \coloneqq m\langle u, \nu \cdot \nabla \tilde v^{TW}(t)\rangle \nu \cdot \nabla \tilde v^{TW}(t)$$ denotes the scaled projection onto $\nu \cdot \nabla \tilde v^{TW}(t)$ for some arbitrary $m \geq C_\ast$ (cf. Assumption \ref{SpcGp}). Further, let $E(t,s)$ denote the evolution system generated by the family $(A(t))_{t \in [0,T]}$ (cf. Proposition \ref{EvSysGen} and the preceding definition). We also introduce the nonlinear residual term $$
\tilde R(t, u) \coloneqq f\left(u + \tilde v^{TW}(t)\right) - f\left(\tilde v^{TW}(t)\right) - f'\left(\tilde v^{TW}(t)\right)u.$$ Finally, let the auxilliary process $N_{A-\lambda}(t) \coloneqq \int_0^t e^{-\lambda(t-s)} S(t-s) \d N(s)$ denote a mild solution of a damped heat equation with damping parameter $\lambda \geq 0$, perturbed by the path $N$.

\begin{proposition} \label{DecompositionMain}
The residual $\tilde U$ decomposes as $\tilde U = \varepsilon Z_\varepsilon + y_\varepsilon$, where
$$Z_\varepsilon(t) \coloneqq \int_0^t E(t,s)\left(\tilde P(s) + f'(v^{TW}(s)) + \lambda \right) N_{A-\lambda}(s) \d s + N_{A-\lambda}(t)$$ and $y_\varepsilon \coloneqq \tilde U - \varepsilon Z_\varepsilon$ solves the variational equation $$\partial_t y_\varepsilon(t) = A(t) y_\varepsilon(t) + R(t,y_\varepsilon(t) +\varepsilon Z_\varepsilon(t))$$ with respect to the Gelfand triple $\mathcal B^2_{1/2} \hookrightarrow L^2 \hookrightarrow (\mathcal B^2_{1/2})^\ast$.
\end{proposition}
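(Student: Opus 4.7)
The plan is to derive the evolution equation for $\tilde U = V - \tilde v^{TW}$ from the mild equation for $V$ and the phase ODE~\eqref{GradDesc}, then exhibit $\varepsilon Z_\varepsilon$ as a pathwise solution of the corresponding linearised non-autonomous equation, so that the claimed identity for $y_\varepsilon$ follows by subtraction. For the first step, I would combine the travelling wave relation $A\tilde v^{TW}(t)+f(\tilde v^{TW}(t))=-c\,\nu\cdot\nabla\tilde v^{TW}(t)$ (which holds at every phase by translation invariance and Assumption~\ref{TW}) with the chain rule $\partial_t\tilde v^{TW}(t)=-C'(t)\,\nu\cdot\nabla\tilde v^{TW}(t)$, the ODE~\eqref{GradDesc}, and the Taylor expansion $f(V)-f(\tilde v^{TW})=f'(\tilde v^{TW})\tilde U+\tilde R(t,\tilde U)$. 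The phase correction produces the projection contribution $(C'(t)-c)\,\nu\cdot\nabla\tilde v^{TW}(t)=\pm\tilde P(t)\tilde U(t)$, which together with the local linearisation $f'(\tilde v^{TW})$ collapses into $A(t)$. In integrated pathwise form this yields
\[
\tilde U(t)=S(t)\tilde U(0)+\int_0^t S(t-s)\bigl[(A(s)-A)\tilde U(s)+\tilde R(s,\tilde U(s))\bigr]\d s+\varepsilon\int_0^t S(t-s)\d N(s),
\]
with every term well-defined thanks to the regularity $\tilde U\in C([0,T];L^2)\cap L^{r+1}([0,T];L^{r+1})\cap L^2([0,T];\mathcal B^2_\delta)$ supplied by Theorem~\ref{exuniq} and Proposition~\ref{YoungConv}.

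Next, I would verify that $Z_\varepsilon$ is the unique pathwise mild solution of $\d Z=A(t)Z\d t+\d N$ with $Z(0)=0$. The essential trick is the substitution $Z_\varepsilon=W+N_{A-\lambda}$, where $N_{A-\lambda}$ is the Young convolution of Proposition~\ref{YoungConv} applied to the shifted sectorial operator $A-\lambda$ (itself injective and sectorial for $\lambda\geq 0$) and therefore satisfies $\d N_{A-\lambda}=(A-\lambda)N_{A-\lambda}\d t+\d N$ in the pathwise sense. Plugging this ansatz into the target equation forces $W$ to solve the \emph{smoothly forced} non-autonomous linear equation
\[
\partial_t W(t)=A(t)W(t)+\bigl(A(t)-(A-\lambda)\bigr)N_{A-\lambda}(t),\qquad W(0)=0,
\]
in which the bracketed operator equals $f'(\tilde v^{TW}(t))\pm\tilde P(t)+\lambda$, a bounded multiplication/projection on $L^2$ pointwise in $t$. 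Using the evolution system $E(t,s)$ generated by $(A(t))_{t\in[0,T]}$ (Proposition~\ref{EvSysGen}) together with the Hölder regularity of $N_{A-\lambda}$, $W$ admits the Bochner-integral mild representation matching the integral term defining $Z_\varepsilon$, while the leftover $N_{A-\lambda}(t)$ carries the rough part of the driver.

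Finally, subtracting $\varepsilon$ times the integrated equation for $Z_\varepsilon$ from the one for $\tilde U$ cancels the rough contribution $\varepsilon\d N$ pathwise and leaves the mild identity $y_\varepsilon(t)=\int_0^t E(t,s)\,\tilde R\bigl(s,y_\varepsilon(s)+\varepsilon Z_\varepsilon(s)\bigr)\d s$, which is the mild form of the claimed equation. Promoting this to a variational identity with respect to the Gelfand triple $\mathcal B^2_{1/2}\hookrightarrow L^2\hookrightarrow(\mathcal B^2_{1/2})^\ast$ is then standard (cf.\ the proof of Lemma~\ref{exvarnag}), by testing against $\phi\in\mathcal B^2_{1/2}$ and using the $L^2([0,T];\mathcal B^2_{1/2})$-regularity of $y_\varepsilon$ inherited jointly from $\tilde U$ and $Z_\varepsilon$, together with Assumption~\ref{IntPl} to ensure that $\tilde R(s,\cdot)$ takes values in $(\mathcal B^2_{1/2})^\ast$. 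The main technical obstacle is Step~1: since $V$ is defined only through the pathwise Young convolution against the rough driver $N$, the equation for $\tilde U$ must be derived entirely in integrated form, and the whole purpose of the splitting $Z_\varepsilon=W+N_{A-\lambda}$ in Step~2 is precisely to avoid ever differentiating $N$ while still exhibiting $Z_\varepsilon$ as a genuine pathwise mild solution of the non-autonomous linearisation.
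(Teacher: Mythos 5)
Your proposal identifies the right mechanism — peeling off the rough part via the auxiliary process $N_{A-\lambda}$, exhibiting $Z_\varepsilon$ as a pathwise convolution against the evolution system $E$, and recovering $y_\varepsilon$ by subtraction — so the underlying decomposition matches the paper's. The route you take to get there, however, differs in a way worth noting. You propose to first derive a mild equation for $\tilde U$ itself (retaining the rough term $\varepsilon\int_0^t S(t-s)\d N(s)$ in the Duhamel form), then verify that $Z_\varepsilon$ solves $\d Z = A(t)Z\,\d t + \d N$ via the ansatz $Z_\varepsilon = W + N_{A-\lambda}$, subtract, and \emph{finally} promote the resulting $E$-convolution identity for $y_\varepsilon$ to a variational statement. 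The paper never writes down any equation for $\tilde U$ alone: it establishes the variational equation for $\tilde w_\lambda = \tilde U - \varepsilon N_{A-\lambda}$ directly by decomposing $\tilde w_\lambda = w + (v^{TW}-\tilde v^{TW}) + \varepsilon\lambda\int_0^{\cdot} S(\cdot-s)N_{A-\lambda}(s)\,\d s$, where each summand is \emph{already known} to be a variational solution of a tractable PDE ($w$ from the existence proof, the travelling-wave differences from Remark~\ref{TechnicalRemark}, and the last term by construction). Then $\tilde v_\lambda$ is a variational solution of the associated inhomogeneous linear equation by inspection, and $y_\varepsilon = \tilde w_\lambda - \tilde v_\lambda$ inherits the variational identity by subtraction, with no mild-to-variational transition ever required.

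Your final step is where the proposal is thinnest. You assert that promotion of the mild identity $y_\varepsilon(t)=\int_0^t E(t,s)\tilde R(s,y_\varepsilon(s)+\varepsilon Z_\varepsilon(s))\,\d s$ to the variational equation is ``standard'' provided $\tilde R(s,\cdot)$ takes values in $(\mathcal B^2_{1/2})^\ast$. But the dominant term of $\tilde R$ is a polynomial of degree $r$, so for $u\in L^{r+1}$ one only gets $\tilde R(t,u)\in L^{(r+1)/r}$, which does \emph{not} embed into $(\mathcal B^2_{1/2})^\ast$ in general; this is exactly the obstruction that forces the paper to work in the intersection space $L^2\cap L^{r+1}$ and to use the $L^{r+1}([0,T];L^{r+1})$-regularity (cf.\ the Lions--Magenes-type argument in Lemma~\ref{exvarnag}). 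You would have to carry the $L^{r+1}$ integrability of $\tilde U$ and $Z_\varepsilon$ explicitly through the subtraction, and additionally justify the passage between the $S$-based Duhamel form of the $\tilde U$-equation (with $(A(s)-A)\tilde U(s)$ as a perturbative forcing) and the $E$-based form — a standard but nontrivial identity for non-autonomous mild solutions. None of these gaps is fatal, but the paper's variational-throughout route avoids them entirely, which is precisely why it proceeds through $\tilde w_\lambda$ and $\tilde v_\lambda$ rather than through a mild form of $\tilde U$.
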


\begin{remark}
We note that the choice of $\lambda$ does not alter the resulting decomposition: Since $y_\varepsilon + \varepsilon Z_\varepsilon(t) = \tilde U(t)$, $y_\varepsilon$ can be seen to be independent of $\lambda$ and thus $Z_\varepsilon$ too. However, different choices of $\lambda$ enable different kinds of estimates in subsequent sections.
\end{remark}

As an intermediate step to reach this result, we decompose $\tilde U$ as $\tilde U = \tilde w_\lambda + \varepsilon N_{A-\lambda}$.  Subsequently, we define  $\varepsilon Z_\varepsilon \coloneq \tilde v_\lambda + \varepsilon N_{A-\lambda}$, where $\tilde v_\lambda$ denotes a first order approximation of $\tilde w_\lambda$. Then $y_\varepsilon \coloneqq \tilde w_\lambda - \tilde v_\lambda = \tilde U - \varepsilon Z_\varepsilon$ defines the higher order residual. 

It is therefore central to identify the first-order approximation $\tilde v_\lambda$ of $\tilde w_\lambda = \tilde U - \varepsilon N_{A-\lambda}.$ The crucial insight in this regard is that $\tilde w_\lambda$ solves a semilinear variational equation whose linear part $(A(t))_{t \in [0,T]}$ generates an exponentially decaying evolution system. The solution $\tilde v_\lambda$ of the corresponding inhomogeneous linear equation therefore provides a faithful approximation of $\tilde w_\lambda$ for small initial data. 

\begin{remark}
Note that the above expression for $Z_\varepsilon(t)$ contains a pathwise notion of convolution of evolution systems generated by bounded perturbations of sectorial operators against Hölder continuous paths. For the specific class of evolution systems that we consider, this can be considered as a simple extension of both the framework of \citet{PronkVeraar}, where Wiener noise is considered, and \citet{Gubinelli2004YoungIA}, where the generator $A$ is assumed to be constant. 
\end{remark}
\begin{definition}
Let $L$ be an injective sectorial operator on some Banach space $\mathcal B$ and $$X \in C^\eta([0,T];\mathcal B_{-\gamma})$$ be some Hölder continuous path with $\eta > \gamma$. Given $R \in L^\infty([0,T];L(\mathcal B_\delta,\mathcal B))$ for some $\delta \in [0,\eta - \gamma)$, let $P(t,s)$ denote the evolution system generated by $L(t) \coloneqq L + R(t)$. We then define $$\int_0^t P(t,s) \d X(s) \coloneqq \int_0^t P(t,s) R(s) X_L(s) \d s + X_L(t),$$ where $X_L(t)$ denotes the convolution of $X$ against the semigroup generated by $L$.
\end{definition}
In our specific case, $A(t)$ is the propagating family of operators and $A-\lambda$ is a sectorial operator such that $B_\lambda(t) \coloneqq A(t) - (A-\lambda)$ is uniformly bounded in $t$ as an operator on $L^2(\mathcal O)$. Then we can define $$\int_0^t E(t,s) \d N(s) \coloneqq \int_0^t E(t,s)B_\lambda(s) N_{A-\lambda}(s) \d s + N_{A-\lambda}(t).$$ Note that this definition is independent of $\lambda \geq 0$.

We now collect some results on the auxilliary processes $N_{A-\lambda}$. In the subsequent section, we will study how the error term $\tilde U(t)$ grows in relation to the fluctuations of $N_{A-\lambda}$. To achieve this, we need the following integration by parts formula, first shown to hold almost surely for all $t \in [0,T]$ in the case of $Q$-fractional Brownian motion with Hurst parameter $H > \frac12$ in \cite{SchmalfussMaslowski}, and later extended by \citet{MaslowskiPospisilErgodicity} to all $H \in (0,1)$. We defer the proof to the appendix.

\begin{proposition} \label{ibp}
Let $S(t)$ be an analytic semigroup on a Banach space $\mathcal B$ with injective generator $A$, and $N \in C^\eta_0([0,T];\mathcal B_{-\gamma})$ for $\gamma < \eta$, where $\mathcal B_\rho = D((- A)^\rho)$. Then the $C^\kappa_0([0,T];B_\delta)$-valued convolution $N_A$ satisfies the identity \begin{equation} \label{ibpformula}
\int_0^t S(t-s) \d N(s) = \int_0^t A S(t-s)(N(s)-N(t)) \d s +S(t)N(t),
\end{equation}where $0 < \delta < \eta - \gamma$ and $0 < \kappa < \min{(\gamma - \eta - \delta, 1)}$.
\end{proposition}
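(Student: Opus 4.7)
The plan is to pass to the limit in the Riemann sum definition of $N_A(t)$ via Abel's summation formula. Proposition \ref{YoungConv} already guarantees that the approximating sums
$$
S_\pi := \sum_{k=0}^{n-1} S(t-t_k)(N(t_{k+1}) - N(t_k))
$$
converge to $N_A(t)$ in $\mathcal B_\delta$ along any sequence of partitions $\pi = \{0 = t_0 < \cdots < t_n = t\}$ with $|\pi| \to 0$; it thus suffices to show that $S_\pi$ also converges to the right-hand side of \eqref{ibpformula}.

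First I would introduce the shifted path $\tilde N(s) := N(s) - N(t)$, which satisfies $\tilde N(t) = 0$ and, using $N(0) = 0$, $\tilde N(0) = -N(t)$. Rewriting $N(t_{k+1}) - N(t_k) = \tilde N(t_{k+1}) - \tilde N(t_k)$ and applying Abel summation produces boundary terms that collapse to $S(t)N(t)$, together with a body that, after using the analyticity identity $S(t-t_k) - S(t-t_{k-1}) = -\int_{t_{k-1}}^{t_k} A S(t-s)\, \d s$, becomes
$$
S_\pi = S(t)N(t) + \sum_{k=1}^{n} \int_{t_{k-1}}^{t_k} A S(t-s)\,\tilde N(t_k)\, \d s,
$$
where the $k=n$ term is appended at no cost since $\tilde N(t_n) = 0$. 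Identity \eqref{ibpformula} then amounts to showing that this Riemann sum converges to $\int_0^t A S(t-s)(N(s)-N(t))\, \d s$ in $\mathcal B_\delta$.

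The main obstacle is the singularity of $(t-s)^{-(1+\delta+\gamma)}$ at $s=t$ coming from the smoothing bound $\norm{A S(t-s) x}_{\mathcal B_\delta} \leq C(t-s)^{-(1+\delta+\gamma)}\norm{x}_{\mathcal B_{-\gamma}}$. Together with $\norm{\tilde N(t_k) - \tilde N(s)}_{\mathcal B_{-\gamma}} \leq \norm{N}_{C^\eta_0}(t_k - s)^\eta$, the telescoping remainder $R_\pi := \sum_k \int_{t_{k-1}}^{t_k} A S(t-s)(\tilde N(t_k) - \tilde N(s))\, \d s$ is dominated in $\mathcal B_\delta$ by an integrand of order $(t-s)^{-(1+\delta+\gamma)}(t_k - s)^\eta$. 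The crude bound $(t_k - s)^\eta \leq |\pi|^\eta$ produces the divergent integral $\int_0^t (t-s)^{-(1+\delta+\gamma)}\, \d s$, so I would split $[0,t] = [0, t-|\pi|] \cup [t-|\pi|, t]$. On the first region the $|\pi|^\eta$ bound, paired with $\int_0^{t-|\pi|}(t-s)^{-(1+\delta+\gamma)}\, \d s \lesssim |\pi|^{-(\delta+\gamma)}$, yields $C|\pi|^{\eta - \delta - \gamma}$; on the second, the direct estimate $(t_k - s)^\eta \leq (t-s)^\eta$ gives $\int_0^{|\pi|} u^{\eta - 1 - \delta - \gamma}\, \d u = C|\pi|^{\eta - \delta - \gamma}$. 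Since $\eta > \delta + \gamma$ by assumption, both contributions vanish as $|\pi| \to 0$, and the same estimates show that $\int_0^t A S(t-s)(N(s)-N(t))\, \d s$ converges absolutely in $\mathcal B_\delta$. Matching the limit of $S_\pi$ with $N_A(t)$ via Proposition \ref{YoungConv} concludes the proof.
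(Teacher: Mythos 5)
Your proof is correct and follows essentially the same strategy as the paper's: Abel summation on the Riemann sum, the analyticity identity $S(t-t_k)-S(t-t_{k-1})=-\int_{t_{k-1}}^{t_k}AS(t-s)\,\mathrm ds$, and the smoothing bound $\norm{AS(t-s)x}_{\mathcal B_\delta}\lesssim (t-s)^{-(1+\delta+\gamma)}\norm{x}_{\mathcal B_{-\gamma}}$ combined with the Hölder modulus of $N$. The only cosmetic difference is that the paper fixes the dyadic partition and closes the argument by dominated convergence against the integrable majorant $(t-r)^{-(1-(\eta-\delta-\gamma))}$, whereas you split $[0,t]$ at $t-|\pi|$ to obtain the explicit quantitative rate $\mathcal O(|\pi|^{\eta-\delta-\gamma})$ for arbitrary partitions.
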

By application of this integration by parts formula to the the convolution $N_{A-\lambda}$ together with Fubini's theorem and standard semigroup manipulations, we can derive the following identity.
\begin{proposition}\label{DampDuham} Let $N$, $A$ be chosen as in Proposition \ref{ibp} and and $\lambda \geq 0$ be arbitrary. For any $t \in [0,T]$,
$$N_{A-\lambda}(t) = N_A(t) - \lambda \int_0^t S(t-s) N_{A-\lambda}(s) \d s.$$
\end{proposition}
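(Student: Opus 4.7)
My plan is to derive the identity from the Duhamel variation-of-constants formula relating the semigroups $(S(t))_{t \geq 0}$ and $(e^{-\lambda t}S(t))_{t \geq 0}$. Since $A - \lambda I$ generates the damped semigroup and $-\lambda I$ is a bounded perturbation of $A$, the standard perturbation identity will give, for all $0 \leq s \leq t$,
\begin{equation*}
e^{-\lambda(t-s)}S(t-s) = S(t-s) - \lambda \int_s^t S(t-u)\, e^{-\lambda(u-s)}S(u-s) \d u.
\end{equation*}
I would then substitute this into the convolution $N_{A-\lambda}(t) = \int_0^t e^{-\lambda(t-s)}S(t-s) \d N(s)$ and interchange the order of integration over the triangle $\{0 \leq s \leq u \leq t\}$, so that the inner integral against $\d N(s)$ is recognised as $N_{A-\lambda}(u)$ and the desired identity can be read off directly.

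The main obstacle will be justifying the Fubini exchange, since the outer integral against $\d N(s)$ is a Young/Riemann-Stieltjes integral against a Hölder path, to which classical Fubini does not apply. To resolve this, I would first invoke Proposition \ref{ibp} to rewrite
\begin{equation*}
N_{A-\lambda}(t) = \int_0^t (A-\lambda)\, e^{-\lambda(t-s)}S(t-s)\bigl(N(s)-N(t)\bigr)\d s + e^{-\lambda t}S(t)N(t),
\end{equation*}
and similarly represent $N_A(t)$ and each $N_{A-\lambda}(u)$ as Bochner integrals involving the increments of $N$. Once every stochastic convolution appears in this form, the classical Fubini theorem becomes applicable to the resulting double Bochner integrals, and the identity reduces to a deterministic semigroup manipulation combining the Duhamel identity above with the elementary relation $e^{-\lambda r}-1 = -\lambda \int_0^r e^{-\lambda v}\d v$ to reconcile the boundary contributions $S(t)N(t)$ and $e^{-\lambda t}S(t)N(t)$. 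Integrability of the resulting double integrands will follow from the sectorial bound $\|(-A)^\alpha S(r)\| \lesssim r^{-\alpha}$ combined with the Hölder regularity $\|N(s)-N(t)\|_{\mathcal{B}_{-\gamma}} \lesssim |t-s|^\eta$ in the assumed regime $\eta > \gamma$.
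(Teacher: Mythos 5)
Your approach is correct and matches the paper's one-line proof sketch (integration by parts, Fubini, and ``standard semigroup manipulations'', of which the Duhamel perturbation identity you write down is precisely the relevant one); you correctly identify that converting the Young convolutions to Bochner integrals via Proposition~\ref{ibp} is what makes classical Fubini available, with absolute integrability over the triangle following from $\eta>\gamma$ once one uses $S(t-u)S(u-s)=S(t-s)$ to see the double integrand is dominated by $(t-s)^{\eta-1-\gamma}$. Incidentally, the block headed ``Proof of Proposition~\ref{DampDuham}'' in the appendix in fact proves Lemma~\ref{ConvBound}, so the paper contains no written-out proof of this Proposition; your plan is a faithful expansion of its sketch.
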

Using Proposition \ref{DampDuham}, we can subtract the terms with unbounded variation from $\tilde U(t)$ by subtracting the process $\varepsilon N_{A-\lambda}$ from $\tilde U(t)$. 
We now identify the semilinear equation satisfied by the resulting term, $\tilde w_\lambda = \tilde U - \varepsilon N_{A-\lambda}$.

\begin{proposition}
The process $$\tilde w_\lambda(t) \coloneqq V(t) - \tilde v^{TW}(t) - \varepsilon N_{A-\lambda}(t) = \tilde U(t) - \varepsilon N_{A-\lambda}(t)$$ satisfies the equation $$\begin{aligned}
\partial_t \tilde w_\lambda(t) &= A(t) \tilde w_\lambda(t)  + \varepsilon \left( \tilde P(t) + f'\left(\tilde v^{TW}(t)\right) + \lambda\right) N_{A-\lambda}(t)  +\tilde R(t,\tilde w_\lambda(t) + \varepsilon N_{A-\lambda}(t)) 
\end{aligned}$$ with respect to the Gelfand triple $\mathcal B^2_{1/2} \hookrightarrow L^2 \hookrightarrow (B^2_{1/2})^\ast$, where $w_\lambda(0) = u_0.$
\end{proposition}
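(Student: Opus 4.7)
I would derive the stated equation by formally differentiating $\tilde w_\lambda = \tilde U - \varepsilon N_{A-\lambda}$ in time, identifying the two contributions using the travelling wave equation together with the phase ODE \eqref{GradDesc} on the one hand, and the defining mild equation for $N_{A-\lambda}$ on the other, and then recognising the resulting right-hand side as the claimed expression.

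The first step is to compute $\partial_t \tilde v^{TW}(t) = -C'(t)\,\nu \cdot \nabla \tilde v^{TW}(t)$ via the chain rule applied to $\tilde v^{TW}(t) = v^{TW}_{C(t)}$. Inserting the phase ODE \eqref{GradDesc} and the translation-invariant travelling wave identity $-c\,\nu \cdot \nabla \tilde v^{TW}(t) = A\tilde v^{TW}(t) + f(\tilde v^{TW}(t))$ (which holds for every spatial translate by Assumption \ref{TW} together with Assumption \ref{TranslInv}), the correction term involving $\langle \tilde U, \nu \cdot \nabla \tilde v^{TW}\rangle$ reconstitutes $\tilde P(t)\tilde U$. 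Subtracting the resulting expression from the (formal) equation $\partial_t V = AV + f(V) + \varepsilon\,\dot N$ and Taylor-expanding
$$f(V) - f(\tilde v^{TW}) = f'(\tilde v^{TW})\,\tilde U + \tilde R(t,\tilde U)$$
produces $\partial_t \tilde U(t) = A(t)\,\tilde U(t) + \tilde R(t,\tilde U(t)) + \varepsilon\,\dot N(t)$, where the sign of $\tilde P(t)$ appearing in $A(t)$ is precisely what makes $A(t)$ dissipative in the sense of Assumption \ref{SpcGp} (for $m \geq C_\ast$).

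The second step exploits the fact that $N_{A-\lambda}$ is the Young-sense mild solution of $\partial_t Y = (A-\lambda) Y + \dot N$, so that formally $\varepsilon\,\partial_t N_{A-\lambda} = \varepsilon(A-\lambda) N_{A-\lambda} + \varepsilon\,\dot N$. Subtracting this from $\partial_t \tilde U$, the ill-defined $\varepsilon\,\dot N$ terms cancel, and re-expressing $\tilde U = \tilde w_\lambda + \varepsilon N_{A-\lambda}$ gives
$$\partial_t \tilde w_\lambda(t) = A(t)\tilde w_\lambda(t) + \varepsilon\bigl(A(t) - (A-\lambda)\bigr) N_{A-\lambda}(t) + \tilde R(t,\tilde w_\lambda(t) + \varepsilon N_{A-\lambda}(t)).$$
Since $A(t) - (A-\lambda) = f'(\tilde v^{TW}) - \tilde P(t) + \lambda$, this matches the claimed identity up to the sign conventions fixed in the statement.

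The main obstacle is to rigorise the manipulations involving $\dot N$, which has no pointwise meaning. I would circumvent this by working from the mild equation for $V$ in Definition \ref{SolDef}, applying the integration by parts identity in Proposition \ref{ibp} to rewrite the Young convolution as
$$\varepsilon \int_0^t S(t-s)\d N(s) = \varepsilon S(t)N(t) + \varepsilon \int_0^t A S(t-s)(N(s)-N(t))\d s,$$
i.e. as a Bochner integral plus a smooth semigroup action, and then differentiating in $(\mathcal B^2_{1/2})^\ast$. The regularity $V - v^{TW} \in C([0,T];L^2) \cap L^{r+1}([0,T];L^{r+1}) \cap L^2([0,T];\mathcal B^2_\delta)$ supplied by Theorem \ref{exuniq}, together with Lipschitz continuity of $C$ (hence of $\tilde v^{TW}$ and $\tilde U$), places every term of the putative equation into the correct function space. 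Passage from the resulting mild identity to the variational form with respect to the Gelfand triple $\mathcal B^2_{1/2} \hookrightarrow L^2 \hookrightarrow (\mathcal B^2_{1/2})^\ast$ then follows by the standard arguments used in the proof of Theorem \ref{exuniq}.
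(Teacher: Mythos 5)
Your proposal is essentially the same computation the paper performs: the paper rewrites $\tilde w_\lambda = w + (v^{TW} - \tilde v^{TW}) + \varepsilon\lambda\int_0^t S(t-s)N_{A-\lambda}(s)\,\mathrm ds$ via Proposition~\ref{DampDuham}, with $w$ the variational solution of Lemma~\ref{exvarnag}, and then sums the equations satisfied by the three summands, which uses exactly the ingredients of your two formal steps (the wave identity~\eqref{WavEq}, the phase ODE~\eqref{GradDesc}, Taylor expansion of $f$ producing $\tilde R$, and the evolution equation behind $N_{A-\lambda}$), while keeping every summand in a rigorous function space so that no $\dot N$-rigorisation is needed at the end. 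The sign discrepancy on $\tilde P$ that you flag in your last line is a genuine sign-convention tension in the paper between~\eqref{GradDesc}, $\partial_C v^{TW}_C = -\nu\cdot\nabla v^{TW}_C$, and the definition $A(t) = A + f'(\tilde v^{TW}) - \tilde P(t)$, not a defect of your derivation; once one of those signs is adjusted the two computations agree.
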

\begin{proof} Using the decomposition
$$
\begin{aligned}
\tilde w_\lambda(t) = \underbrace{V(t) - v^{TW}(t) - \varepsilon N_A(t)}_{=w(t)} + v^{TW}(t) - \tilde v^{TW}(t)  + \varepsilon \lambda \int_0^t S(t-s)N_{A-\lambda}(s) \d s
\end{aligned}
$$the claim can be verified by direct calculations by means of the partial differential equations satisfied by each term.
\end{proof}

Finally, we show that \( (A(t))_{t \in [0,T]} \) generates an exponentially decaying evolution system \( E \) on \( L^2 \) and we establish a decomposition of the process \( \tilde{w}_\lambda(t) \) into a sum $\tilde w_\lambda = \tilde v_\lambda + y_\varepsilon$ of a convolution $\tilde v_\lambda$ against an $E$, with a nonlinear residual $y_\varepsilon$. With this representation, we can leverage the exponential decay properties of $E$, enabling us to derive upper bounds on the norm of $\tilde{U}(t) = \tilde{w}_\lambda(t) + \varepsilon N_{A-\lambda}(t).$

\begin{definition}[Evolution system] 
Let $\mathcal B$ be a separable Banach space and $\Delta^2 \coloneqq \{(t,s) \in \mathbb [0,T]^2: s<t \}$. An evolution system on $[0,T]$ is defined as a map $P \colon \Delta^2 \rightarrow L(\mathcal B)$ such that
\begin{itemize}
    \item $P(t,t) = I$.
    \item $P(t,s)P(s,r) = P(t,r)$.
    \item $(t,s) \mapsto P(t,s)$ is strongly continuous.
\end{itemize}
\end{definition}
\begin{proposition}[Existence in linear perturbation case \cite{henry81:GTS}] \label{EvSys}
Let $L$ be a sectorial operator on $\mathcal B$ and $R \in L^\infty([0,T];L(\mathcal B_\alpha, \mathcal B))$ with $0 \leq \alpha < 1$. Then the operators $L(t) \coloneqq L + R(t)$ generate an evolution system $P$ such that $x(t) \coloneqq P(t,\tau)x$ solves the nonautonomous evolution equation $$\partial_t x(t) = (L + R(t))x(t)$$ with $x(\tau) = x$.
\end{proposition}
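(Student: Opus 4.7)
The plan is to construct $P(t,s)$ as the solution of the variation-of-constants integral equation
\begin{equation*}
P(t,s) = S(t-s) + \int_s^t S(t-r) R(r) P(r,s) \, dr,
\end{equation*}
where $(S(t))_{t \ge 0}$ is the analytic semigroup generated by the sectorial operator $L$. Iterating this formula yields a Dyson-type series $P(t,s) = \sum_{n=0}^\infty P_n(t,s)$ with $P_0(t,s) = S(t-s)$ and $P_n(t,s) = \int_s^t S(t-r) R(r) P_{n-1}(r,s) \, dr$ for $n \ge 1$. The first step is to verify that this series converges absolutely and uniformly on $\Delta^2$ in the operator norm on $\mathcal{B}$.

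To that end, I would invoke the standard smoothing estimate $\|S(t)\|_{L(\mathcal B,\mathcal B_\alpha)} \le M t^{-\alpha} e^{\omega t}$ valid for sectorial $L$, together with $K \coloneqq \esssup_{t \in [0,T]} \|R(t)\|_{L(\mathcal B_\alpha,\mathcal B)} < \infty$. An induction using the Beta-function identity $\int_s^t (t-r)^{-\alpha}(r-s)^{n(1-\alpha)} dr = B(1-\alpha, n(1-\alpha)+1)(t-s)^{(n+1)(1-\alpha)}$ then yields
\begin{equation*}
\|P_n(t,s)\|_{L(\mathcal B)} \le e^{\omega T}(MK)^n \frac{\Gamma(1-\alpha)^n}{\Gamma(n(1-\alpha)+1)} (t-s)^{n(1-\alpha)},
\end{equation*}
and the asymptotic ratio of Gamma functions secures absolute convergence uniformly on $\Delta^2$.

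Next I would verify the three evolution-system axioms. The identity $P(t,t) = I$ is immediate because $P_n(t,t) = 0$ for $n \ge 1$, and the strong continuity of $(t,s) \mapsto P(t,s) x$ for fixed $x \in \mathcal B$ follows from the strong continuity of $S$ and dominated convergence in each $P_n$, with the uniform bound above allowing interchange of limit and sum. For the cocycle property $P(t,s) P(s,r) = P(t,r)$, I would show that for fixed $r$, both sides solve the same integral equation in $t$ with the same value at $t = s$, and then invoke the uniqueness built into the fixed-point construction.

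Finally, that $x(t) \coloneqq P(t,\tau) x$ satisfies $\partial_t x(t) = (L + R(t)) x(t)$ is obtained by differentiating the integral equation, using the identity $\partial_t \int_\tau^t S(t-r) g(r) \, dr = g(t) + L \int_\tau^t S(t-r) g(r) \, dr$ valid on the analytic-semigroup scale. The main obstacle is the singular kernel $(t-r)^{-\alpha}$ arising because $R(r)$ only takes values in $L(\mathcal B_\alpha, \mathcal B)$ rather than in $L(\mathcal B)$; this forces one to work throughout on the fractional-power scale and to rely on the Gamma-function estimate to close the induction, and it obliges one to justify the termwise differentiation by showing that for $t > \tau$ the tail of the series already lies in $D(L)$ through the smoothing of $S$.
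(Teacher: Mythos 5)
The paper does not actually supply a proof of this proposition; it is imported verbatim from Henry's monograph (the cited reference), specifically the situation of Theorem~7.1.3 there, where the time-dependent family has the form $A(t)=A+B(t)$ with $B(t)\in L(X^\alpha,X)$. Henry's proof of that theorem is exactly the variation-of-constants/Dyson-series construction you propose, so your route is not merely correct but coincides with the proof behind the citation. The axioms ($P(t,t)=I$, cocycle, strong continuity) and the differentiation argument you sketch are the standard steps.

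One bookkeeping point is slightly off, and it is worth spelling out because it is the only delicate part of the argument. Since $R(r)$ takes $\mathcal B_\alpha$ into $\mathcal B$, the iterate $P_n(t,s)=\int_s^t S(t-r)R(r)P_{n-1}(r,s)\,\mathrm{d}r$ is only well defined if one has a bound on $\|P_{n-1}(r,s)\|_{L(\mathcal B,\mathcal B_\alpha)}$ rather than on $\|P_{n-1}(r,s)\|_{L(\mathcal B)}$. The induction therefore naturally runs on the scale $L(\mathcal B,\mathcal B_\alpha)$, with the singular factor $(t-r)^{-\alpha}$ coming from $\|S(t-r)\|_{L(\mathcal B,\mathcal B_\alpha)}$ applied to the output of $R$, and the inductive bound carrying an extra overall $(t-s)^{-\alpha}$: one gets
$\|P_n(t,s)\|_{L(\mathcal B,\mathcal B_\alpha)}\leq C_n\,(t-s)^{\,n(1-\alpha)-\alpha}e^{\omega(t-s)}$
with $C_n$ controlled by the Beta identity $\int_s^t(t-r)^{-\alpha}(r-s)^{(n-1)(1-\alpha)-\alpha}\,\mathrm{d}r=B(1-\alpha,n(1-\alpha))(t-s)^{n(1-\alpha)-\alpha}$. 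The bound you wrote for $\|P_n\|_{L(\mathcal B)}$ with the factor $\Gamma(1-\alpha)^n/\Gamma(n(1-\alpha)+1)$ does not close by itself as a single-scale induction, because the $L(\mathcal B)$ norm of $P_{n-1}$ is not enough to make $R(r)P_{n-1}(r,s)$ meaningful; the $L(\mathcal B)$ bound is rather a corollary of the $L(\mathcal B,\mathcal B_\alpha)$ estimate once the final smoothing of $S(t-r)$ is discharged. You flag the fractional-power scale as the ``main obstacle,'' and that is precisely the fix: run the induction on $L(\mathcal B,\mathcal B_\alpha)$ and deduce the $L(\mathcal B)$ bound afterwards. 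With that adjustment the convergence via the Gamma-function asymptotics, the cocycle property via uniqueness of the Volterra integral equation, and the termwise differentiation (using that $P(t,\tau)x\in D(L)$ for $t>\tau$) all go through as you describe.
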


\begin{proposition} \label{EvSysGen}
The family of operators $(A(t))_{t\in[0,T]}$ generates an exponentially decaying evolution system $E$.
\end{proposition}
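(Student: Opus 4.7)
The plan is to decompose $A(t) = A + R(t)$ with $R(t)u \coloneqq f'(\tilde v^{TW}(t))u - \tilde P(t)u$, apply Proposition \ref{EvSys} to produce the evolution system $E(t,s)$, and then obtain exponential decay by an energy estimate based on the spectral gap Assumption \ref{SpcGp} and the choice $m \geq C_\ast$.

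For the existence step, I first need to check that $R \in L^\infty([0,T]; L(L^2))$, so that Proposition \ref{EvSys} applies with $\alpha = 0$. The multiplication operator $u \mapsto f'(\tilde v^{TW}(t))u$ is bounded on $L^2$, uniformly in $t$, because $\tilde v^{TW}(t)$ is a translate of $v^{TW}_0 \in L^\infty(\mathcal O)$ (Assumption \ref{TW}) and so, after inserting into the polynomial $f_0'$ of degree $r-1$ and the bounded-derivative function $f_1'$, one obtains an $L^\infty$-function whose norm does not depend on $t$. The scaled projection $\tilde P(t)$ is rank one with bound proportional to $\norm{\nu\cdot\nabla\tilde v^{TW}(t)}_{L^2}^2$, which is finite and $t$-independent by translation invariance and Assumption \ref{Intgrbl}. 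Strong continuity of $t \mapsto R(t)$ follows from continuity of $t \mapsto C(t)$ and smoothness of the translation action on $L^2$ (again using $\nu\cdot\nabla v^{TW}_0 \in L^2$ together with boundedness of $v^{TW}_0$). Hence Proposition \ref{EvSys} yields an evolution system $E(t,s)$ on $L^2(\mathcal O)$.

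For the exponential decay, apply Assumption \ref{SpcGp} to $\mathcal L_{TW}$ linearised at $\tilde v^{TW}(t)$: for $u \in \mathcal B^2_{1/2}$,
\begin{equation*}
\langle A(t) u, u\rangle_{L^2}
= \langle \mathcal L_{TW} u, u\rangle_{L^2} - m\,\langle \nu\cdot\nabla\tilde v^{TW}(t), u\rangle_{L^2}^2
\leq -\kappa_\ast \norm{u}_{\mathcal B^2_{1/2}}^2 + (C_\ast - m)\,\langle \nu\cdot\nabla\tilde v^{TW}(t), u\rangle_{L^2}^2.
\end{equation*}
With $m \geq C_\ast$ and the continuous embedding $\mathcal B^2_{1/2} \hookrightarrow L^2$ (giving $\norm{u}_{\mathcal B^2_{1/2}} \geq \norm{u}_{L^2}$), this simplifies to
\begin{equation*}
\langle A(t)u, u\rangle_{L^2} \leq -\kappa_\ast \norm{u}_{L^2}^2.
\end{equation*}

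Given $x \in L^2$, the trajectory $t \mapsto E(t,s)x$ is the unique variational solution in $C([s,T]; L^2) \cap L^2([s,T];\mathcal B^2_{1/2})$ of the non-autonomous linear equation driven by $A(\cdot)$, with respect to the Gelfand triple $\mathcal B^2_{1/2} \hookrightarrow L^2 \hookrightarrow (\mathcal B^2_{1/2})^\ast$. The standard variational chain rule then gives
\begin{equation*}
\frac{d}{dt}\norm{E(t,s)x}_{L^2}^2 = 2\,{}_{(\mathcal B^2_{1/2})^\ast}\langle A(t)E(t,s)x, E(t,s)x\rangle_{\mathcal B^2_{1/2}} \leq -2\kappa_\ast \norm{E(t,s)x}_{L^2}^2
\end{equation*}
for a.e. $t \in [s,T]$, and Grönwall's lemma yields $\norm{E(t,s)}_{L(L^2)} \leq e^{-\kappa_\ast(t-s)}$.

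The main subtlety lies in justifying the energy identity rigorously: $A(t)$ is only closed on a possibly $t$-dependent domain, so the straightforward semigroup-style computation is not directly available. I would circumvent this by working in the variational framework, first verifying that $(A(t))_{t \in [0,T]}$ satisfies the hemicontinuity, coercivity, and boundedness conditions needed to invoke the Lions chain rule (e.g. Theorem 4.2.5 of \cite{LiuRoeckner2015}), and then identifying $E(t,s)x$ with the variational solution started at $x$ at time $s$. Once this identification is in place, the estimate above is immediate, and the exponential-decay statement $\norm{E(t,s)}_{L(L^2)} \leq e^{-\kappa_\ast(t-s)}$ follows for all $0 \leq s \leq t \leq T$.
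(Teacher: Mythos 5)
Your proof is correct and follows essentially the same route as the paper: decompose $A(t) = A + R(t)$ with a uniformly bounded perturbation $R(t)$, invoke Proposition \ref{EvSys} to obtain the evolution system, and derive dissipativity (hence exponential decay) from the spectral gap of Assumption \ref{SpcGp} together with the choice $m \geq C_\ast$. The paper's proof is terser and leaves the energy estimate for exponential decay implicit, whereas you spell out the $L^\infty([0,T];L(L^2))$ verification for $R(t)$ and the variational chain-rule argument, but these are elaborations of the same steps rather than a different approach.
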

\begin{proof}
Since $A$ is sectorial and \( f'(\tilde{v}^{TW}(t)) \) and \( \tilde{P}(t) \) are bounded as operators mapping $L^2$ into itself, we see that the family \( (A(t))_{t \in [0,T]} \) satisfies the conditions of Proposition \ref{EvSys}.

Importantly, the operators $A(t)$ are dissipative. This can be seen through the Poincaré-type inequality 
$$\langle Au + f'(\tilde v^{TW})u, u \rangle \leq -\kappa_\ast \norm{u}^2_{\mathcal B^2_{1/2}} + C_\ast \langle u, \nu \cdot \nabla \tilde v^{TW}(t) \rangle ^2,$$ which holds by Assumption \ref{SpcGp}. Rearranging this inequality yields that $$\langle Au + f'(\tilde v^{TW})u - C_\ast \langle u, \nu \cdot \nabla  \tilde v^{TW}\rangle \nu \cdot \nabla \tilde v^{TW}, u \rangle \leq -\kappa_\ast \norm{u}^2_{1/2},$$ implying in particular that $A(t)$ is dissipative for $m \geq C_\ast$. It follows that $E$ is exponentially decaying.
\end{proof}

\begin{proof}[Proof of Proposition \ref{DecompositionMain}]
Finally, we can define $$\tilde v_\lambda (t) \coloneqq \varepsilon \int_0^t E(t,s) \left(\tilde P(s) + f'(v^{TW}(s)) + \lambda \right) N_{A-\lambda}(s)  \d s.$$
We remark that by the integrability properties of $N_{A-\lambda}$, $\tilde v_t$ solves the variational equation $$\partial_t \tilde v_\lambda(t) = A(t)\tilde v_\lambda(t) + \left( \tilde P(t) + f'\left(\tilde v^{TW}(t)\right) + \lambda\right) N_{A-\lambda}(t)$$ with respect to the Gelfand triple $\mathcal B^2_{1/2} \hookrightarrow L^2 \hookrightarrow (B^2_{1/2})^\ast$. As a consequence, the process $y_\varepsilon \coloneqq \tilde w_\lambda - \tilde v_\lambda$ solves the variational equation $$\partial_t y_\varepsilon(t) = A(t) y_\varepsilon(t) + R(t,y_\varepsilon(t) + \tilde v_\lambda(t) + \varepsilon N_{A-\lambda}(t)).$$
By definition of these processes,  the process $\tilde w_\lambda$ decomposes as $$\begin{aligned}
\tilde w_\lambda(t) &= y_\varepsilon(t) +\varepsilon \int_0^t E(t,s) \left(\tilde P(s) + f'(v^{TW}(s)) + \lambda \right) N_{A-\lambda}(s)  \d s.
\end{aligned}$$
Therefore, we also obtain a decomposition of $\tilde U(t) = V(t) - \tilde v^{TW}(t)$, which yields
$$
\tilde U(t) = \tilde w_\lambda(t) + \varepsilon N_{A-\lambda}(t) = y_\varepsilon(t) +\varepsilon \int_0^t E(t,s) \left(\tilde P(s) + f'(v^{TW}(s)) + \lambda \right) N_{A-\lambda}(s) \d s + \varepsilon N_{A-\lambda}(t). $$
\end{proof}

\subsection{Stability for small noise amplitudes}
Recall that we defined $\mathcal U = L^2\cap L^{r+1}$. 
In the remainder of Section \ref{pathstab}, we assume that $V(0) = v_0^{TW}$, or equivalently, $u_0 = 0$. The main result of this subsection is an upper bound of the form $$\sup_{0 \leq t \leq T} \norm{y_\varepsilon(t)}_{L^2} \in o\left(\varepsilon \sup_{0 \leq t \leq T}\norm{Z_\varepsilon(t)}_{\mathcal U}\right)$$ on the nonlinear residual $y_\varepsilon(t)$ for sufficiently small noise amplitudes $\varepsilon>0$. 

\begin{proposition} \label{Ybound}
Let the constant $C_y > 0$ be as in Lemma \ref{diffineq} and $\kappa_\ast > 0$ be the dissipativity constant of $A(t)$ given by \eqref{SpcGpIneq}. Then there exists a constant $z_\ast > 0$ independent of $\varepsilon > 0$ and $T>0$ such that whenever $$\sup_{0 \leq t \leq T} \norm{Z_\varepsilon(t)}_{\mathcal U} \leq \frac{z_\ast}{\varepsilon},$$ then $$ \sup_{t \in [0,T]}\norm{y_\varepsilon(t)}^2_{L^2}\leq \frac{2C_y}{\kappa_\ast}\sum_{k=3}^{r+1} \varepsilon^k \sup_{0 \leq t \leq T} \norm{Z_\varepsilon(t)}_{\mathcal U}^{k}.$$ \end{proposition}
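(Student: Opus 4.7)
The plan is to derive a differential inequality for $\|y_\varepsilon(t)\|^2_{L^2}$ by taking the $L^2$-pairing of the variational equation
$$\partial_t y_\varepsilon(t) = A(t)y_\varepsilon(t) + \tilde R(t, y_\varepsilon(t)+\varepsilon Z_\varepsilon(t))$$
with $y_\varepsilon(t)$, exploit the dissipativity of $A(t)$ to produce a coercive $-\kappa_\ast\|y_\varepsilon\|^2_{\mathcal B^2_{1/2}}$ term, absorb the unfavourable nonlinear contributions under the smallness hypothesis on $Z_\varepsilon$, and close the estimate by Gronwall after a short continuation argument. Since $y_\varepsilon(0)=0$, the solution starts below any chosen threshold, which is what allows the bootstrap.

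In detail, working in the Gelfand triple $\mathcal B^2_{1/2}\hookrightarrow L^2\hookrightarrow(\mathcal B^2_{1/2})^\ast$, the chain rule yields
$$\tfrac{1}{2}\tfrac{d}{dt}\|y_\varepsilon(t)\|^2_{L^2}=\langle A(t)y_\varepsilon(t),y_\varepsilon(t)\rangle+\langle \tilde R(t,y_\varepsilon(t)+\varepsilon Z_\varepsilon(t)),y_\varepsilon(t)\rangle.$$
The dissipativity already established in Proposition \ref{EvSysGen} from Assumption \ref{SpcGp} gives $\langle A(t)y_\varepsilon,y_\varepsilon\rangle\leq-\kappa_\ast\|y_\varepsilon\|^2_{\mathcal B^2_{1/2}}$. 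The nonlinear term is then handled by Lemma \ref{diffineq}, which produces the constant $C_y$ appearing in the statement: expanding $\tilde R(t,y_\varepsilon+\varepsilon Z_\varepsilon)$ by Taylor's formula around $\tilde v^{TW}(t)$ and exploiting the decomposition $f=f_0+f_1$ (Assumption \ref{NemytskiiAss}), one obtains quadratic-to-degree-$r$ monomials in $(y_\varepsilon,\varepsilon Z_\varepsilon)$. Applying the Gagliardo-Nirenberg inequality \eqref{IntPlIneq} together with $\theta_p<2/p$ (Assumption \ref{IntPl}) to the pure-$y_\varepsilon$ contributions, and Young's inequality to the mixed terms, yields an estimate of the schematic form
$$2|\langle \tilde R(t,y_\varepsilon+\varepsilon Z_\varepsilon),y_\varepsilon\rangle|\leq\bigl(\tfrac{\kappa_\ast}{2}+h(\|y_\varepsilon\|_{L^2},\varepsilon\|Z_\varepsilon\|_{\mathcal U})\bigr)\|y_\varepsilon\|^2_{\mathcal B^2_{1/2}}+C_y\sum_{k=3}^{r+1}\varepsilon^k\|Z_\varepsilon\|^k_{\mathcal U},$$
where $h$ vanishes as its arguments vanish.

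Now comes the continuation step. Fix a small threshold $M>0$ and set
$$T^\ast=\sup\Bigl\{t\in[0,T]:\|y_\varepsilon(s)\|^2_{L^2}\leq M\text{ and } h(\|y_\varepsilon(s)\|_{L^2},\varepsilon\|Z_\varepsilon(s)\|_{\mathcal U})\leq\tfrac{\kappa_\ast}{2}\text{ for all }s\leq t\Bigr\}.$$
On $[0,T^\ast]$ the bad term is fully absorbed by dissipativity, leaving
$$\tfrac{d}{dt}\|y_\varepsilon\|^2_{L^2}\leq-\kappa_\ast\|y_\varepsilon\|^2_{\mathcal B^2_{1/2}}+2C_y\sum_{k=3}^{r+1}\varepsilon^k\|Z_\varepsilon\|^k_{\mathcal U}.$$
Because $\|\cdot\|_{L^2}\leq\|\cdot\|_{\mathcal B^2_{1/2}}$, Gronwall's inequality together with $y_\varepsilon(0)=0$ gives, for every $t\in[0,T^\ast]$,
$$\|y_\varepsilon(t)\|^2_{L^2}\leq\frac{2C_y}{\kappa_\ast}\sum_{k=3}^{r+1}\varepsilon^k\sup_{0\leq s\leq T}\|Z_\varepsilon(s)\|^k_{\mathcal U}.$$
The final step is to choose $z_\ast>0$ so that whenever $\sup_t\varepsilon\|Z_\varepsilon(t)\|_{\mathcal U}\leq z_\ast$, the right-hand side is strictly less than $M$ and simultaneously forces $h(\cdot,\cdot)\leq\kappa_\ast/2$; by continuity of $t\mapsto\|y_\varepsilon(t)\|_{L^2}$ this prohibits $T^\ast<T$, so $T^\ast=T$ and the asserted bound holds on $[0,T]$.

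The main obstacle is the bootstrap: one must verify that the threshold $M$ and the constant $z_\ast$ can be chosen compatibly so that both the smallness of $h$ (needed for absorption) and the final bound being $\leq M$ are enforced by the single hypothesis $\varepsilon\sup_t\|Z_\varepsilon\|_{\mathcal U}\leq z_\ast$. This is a bookkeeping exercise that relies crucially on the strict inequality $\theta_p<2/p$ for $3\leq p\leq r$ in Assumption \ref{IntPl}, which is what makes the pure $y_\varepsilon$ nonlinear terms genuinely subcritical with respect to the dissipative budget $\|y_\varepsilon\|^2_{\mathcal B^2_{1/2}}$; the borderline case $p=r+1$ only appears with coefficient $\varepsilon Z_\varepsilon$ and hence enters the inhomogeneity rather than the absorbable part.
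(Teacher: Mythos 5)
Your argument is essentially the paper's argument in slightly different clothing: both proofs run on the differential inequality of Lemma~\ref{diffineq}, both exploit that the pure-$y_\varepsilon$ terms are subcritical (this is exactly where $\theta_p<2/p$ enters), and both propagate smallness forward from $y_\varepsilon(0)=0$ via an ODE comparison. Where the paper simply dominates the superlinear terms $K_y\sum_k\|y_\varepsilon\|_{L^2}^{p_k}$ by a single power $(r-2)K_y\,y^p$, observes that the right-hand side of the differential inequality is nonpositive on the interval
$$\frac{2C_y}{\kappa_\ast}\sum_{k=3}^{r+1}\varepsilon^k\sup_t\|Z_\varepsilon(t)\|_{\mathcal U}^k \;\leq\; \|y_\varepsilon\|_{L^2}^2 \;\leq\; \left(\frac{\kappa_\ast}{2(r-2)K_y}\right)^{\frac{1}{p-1}},$$
and invokes a scalar comparison theorem, you instead run an explicit continuation argument with a stopping time $T^\ast$ plus Gronwall. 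The two are logically equivalent and either closes the proof.

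Two small points are worth noting. First, the paper's packaging has the advantage that $z_\ast$ emerges cleanly and explicitly: one introduces the increasing function $\ell(z)=\tfrac{2C_y}{\kappa_\ast}(z^3+\dots+z^{r+1})$ and sets $z_\ast=\ell^{-1}\bigl((\kappa_\ast/(2(r-2)K_y))^{1/(p-1)}\bigr)$, which makes the $T$- and $\varepsilon$-independence of $z_\ast$ transparent; you flag this coupling of $M$ and $z_\ast$ as a bookkeeping exercise but leave it implicit. Second, your intermediate estimate restates Lemma~\ref{diffineq} in the form $(\kappa_\ast/2+h)\|y_\varepsilon\|_{\mathcal B^2_{1/2}}^2+C_y\sum\varepsilon^k\|Z_\varepsilon\|_{\mathcal U}^k$ with a vanishing $h$, whereas the lemma as stated already fully absorbs the $\|\cdot\|_{\mathcal B^2_{1/2}}$ budget and leaves $K_y\sum\|y_\varepsilon\|_{L^2}^{p_k}$; your form is a valid precursor but not what the lemma literally says, and your constant bookkeeping around it is slightly off (the inhomogeneity after absorption should be $C_y\sum$, not $2C_y\sum$ --- which is harmless since it only improves the final constant, but worth keeping straight).
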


The proof of Proposition \ref{Ybound} hinges on the following differential inequality. 

\begin{lemma} 
\label{diffineq}
There are constants $K_y, C_y >0$ independent of $\varepsilon > 0$ and exponents $2 < p_3 < \dots < p_r$ such that the $L^2$-norm of the remainder $y_\varepsilon(t)$ satisfies the differential inequality $$\begin{aligned}
\partial_t \norm{y_\varepsilon(t)}^2_{L^2} &\leq - \kappa_\ast \norm{y_\varepsilon(t)}^2_{L^2} + K_y \sum_{k = 3}^{r} \norm{y_\varepsilon(t)}^{p_k}_{L^2} + C_y\sum_{k=3}^{r+1} \varepsilon^k \sup_{0 \leq t \leq T}\norm{Z_\varepsilon(t)}_{\mathcal U}^{k}
\end{aligned}$$ for all $t \in [0,T].$
\end{lemma}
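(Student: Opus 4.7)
The plan is to differentiate $\norm{y_\varepsilon(t)}_{L^2}^2$ along the variational equation from Proposition \ref{DecompositionMain} and to control the resulting nonlinear pairing by Taylor expanding $\tilde R$ and combining the Gagliardo--Nirenberg interpolation \eqref{IntPlIneq} with weighted Young's inequalities. Testing the equation against $y_\varepsilon$ in $L^2$ and invoking the dissipativity of $A(t)$ established in the proof of Proposition \ref{EvSysGen} (a rearrangement of Assumption \ref{SpcGp} using $m \geq C_\ast$) yields
\begin{equation*}
\tfrac{1}{2}\partial_t \norm{y_\varepsilon}_{L^2}^2 \leq -\kappa_\ast \norm{y_\varepsilon}_{\mathcal B^2_{1/2}}^2 + \langle \tilde R(t, y_\varepsilon + \varepsilon Z_\varepsilon), y_\varepsilon\rangle,
\end{equation*}
so all further work is concentrated on the nonlinear pairing on the right.

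Next I would Taylor-expand $\tilde R$. Since $f_0$ is a polynomial of odd degree $r$ with leading coefficient $a_r < 0$ and $f_1 \in C^2$ has a bounded second derivative, a binomial computation produces
\begin{equation*}
\tilde R(t,u) = a_r u^r + \sum_{k=2}^{r-1} b_k\bigl(\tilde v^{TW}(t)\bigr) u^k + g(t,u),
\end{equation*}
where $\lvert g(t,u)\rvert \leq \tfrac{1}{2}\norm{f_1''}_\infty u^2$ and the coefficients $b_k$ are uniformly bounded in $t,x$ because $v^{TW}_0 \in L^\infty(\mathcal O)$ by Assumption \ref{TW}. Substituting $u = y_\varepsilon + \varepsilon Z_\varepsilon$ and expanding binomially decomposes the pairing into integrals of $y_\varepsilon^{k-j+1}(\varepsilon Z_\varepsilon)^j$ for $2 \leq k \leq r,~0 \leq j \leq k$. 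The structural observation I would lean on is that the $j=0$ piece of the leading term equals $a_r \int y_\varepsilon^{r+1}\d x = -\lvert a_r\rvert \norm{y_\varepsilon}_{L^{r+1}}^{r+1}$ with the right sign because $r+1$ is even; this auxiliary $L^{r+1}$-dissipation is what allows the critical power $k+1 = r+1$ (where Assumption \ref{IntPl} is \emph{not} postulated) to be closed.

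For each mixed monomial with $j \geq 1$ I apply the weighted Young inequality
\begin{equation*}
\lvert y_\varepsilon\rvert^{k-j+1}(\varepsilon \lvert Z_\varepsilon\rvert)^j \leq \delta \lvert y_\varepsilon\rvert^{k+1} + C_\delta (\varepsilon \lvert Z_\varepsilon\rvert)^{k+1},
\end{equation*}
with conjugate exponents $(k+1)/(k-j+1)$ and $(k+1)/j$. When $k=r$, the cumulative $\delta$ is chosen strictly below $\lvert a_r\rvert/2$ so that the resulting $\int y_\varepsilon^{r+1}$ terms are absorbed into the good $L^{r+1}$-dissipation. When $2 \leq k \leq r-1$, I instead invoke \eqref{IntPlIneq} with $p = k+1$ and use that $(k+1)\theta_{k+1} < 2$ by \eqref{ExponentCondition}; a second Young inequality with exponents $2/((k+1)\theta_{k+1})$ and $2/(2-(k+1)\theta_{k+1})$ then yields
\begin{equation*}
\norm{y_\varepsilon}_{L^{k+1}}^{k+1} \leq \delta'\norm{y_\varepsilon}_{\mathcal B^2_{1/2}}^2 + C_{\delta'}\norm{y_\varepsilon}_{L^2}^{p_{k+1}},\quad p_{k+1} = \frac{2(k+1)(1-\theta_{k+1})}{2-(k+1)\theta_{k+1}} > 2.
\end{equation*}
Integrating the $(\varepsilon \lvert Z_\varepsilon\rvert)^{k+1}$ remainders in space gives $\varepsilon^{k+1}\norm{Z_\varepsilon(t)}_{L^{k+1}}^{k+1} \leq \varepsilon^{k+1}\norm{Z_\varepsilon(t)}_{\mathcal U}^{k+1}$ via $\mathcal U \hookrightarrow L^{k+1}$ for $2 \leq k+1 \leq r+1$, producing the $\sum_{k=3}^{r+1}\varepsilon^k\norm{Z_\varepsilon}_{\mathcal U}^{k}$ contribution after reindexing.

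The hard part is not any single estimate but the simultaneous balancing of the Young parameters: the collection of $\delta$'s used at index $k=r$ must be small enough that \emph{all} induced $\norm{y_\varepsilon}_{L^{r+1}}^{r+1}$ terms fit inside the reservoir $-\lvert a_r\rvert\norm{y_\varepsilon}_{L^{r+1}}^{r+1}$, while the $\delta'$'s used at indices $k \leq r-1$ must simultaneously keep the cumulative $\norm{y_\varepsilon}_{\mathcal B^2_{1/2}}^2$ coefficient strictly below $\kappa_\ast$. Once this balance is arranged, the leftover $-\kappa_\ast\norm{y_\varepsilon}_{\mathcal B^2_{1/2}}^2$ bounds $-\kappa_\ast \norm{y_\varepsilon}_{L^2}^2$ via the trivial embedding $\mathcal B^2_{1/2}\hookrightarrow L^2$ (with constant one, by Assumption \ref{SlfAdj}), the nonpositive surplus $L^{r+1}$-term is dropped, and one reads off the stated inequality with constants $K_y, C_y$ depending only on $\norm{v^{TW}_0}_{L^\infty}$, the coefficients of $f_0$, $\norm{f_1''}_\infty$, $\kappa_\ast$, and the Gagliardo--Nirenberg constants.
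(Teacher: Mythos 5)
Your proposal is correct and follows essentially the same strategy as the paper's proof: test the variational equation against $y_\varepsilon$ in $L^2$, invoke dissipativity of $A(t)$ from Assumption \ref{SpcGp}, decompose the Taylor remainder $\tilde R$ into the leading degree-$r$ monomial (whose $j=0$ binomial piece $-\lvert a_r\rvert\norm{y_\varepsilon}^{r+1}_{L^{r+1}}$ supplies the absorbing dissipation for the critical index), the intermediate degrees $2\le k\le r-1$, and the bounded $f_1''$ remainder, then close each mixed monomial via weighted Young's inequality and the Gagliardo--Nirenberg interpolation \eqref{IntPlIneq} with $(k+1)\theta_{k+1}<2$ from \eqref{ExponentCondition}. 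The only cosmetic difference is that the paper separately discards the already nonpositive even-$j$ pieces of $-a(y_\varepsilon+\varepsilon Z_\varepsilon)^r y_\varepsilon$ before Young-splitting only the odd-$j$ ones, whereas you Young-split all $j\ge 1$ uniformly; both give the same result, and your bookkeeping of the $\delta$ budgets against $\lvert a_r\rvert$ and $\kappa_\ast$ is exactly the balancing carried out in the paper.
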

\begin{remark} \label{f0equal0}
If $f_0 \equiv 0$, then the results of this section still hold under mild modifications of assumptions and statements. Namely, the differential inequality in Lemma \ref{diffineq} changes to $$\partial_t \norm{y_\varepsilon(t)}^2_{L^2} \leq - \kappa_\ast \norm{y_\varepsilon(t)}^2_{L^2} + K_y \norm{y_\varepsilon(t)}^{p_1}_{L^2} + C_y \varepsilon^3 \sup_{0 \leq t \leq T}\norm{Z_\varepsilon(t)}_{L^3}^3,$$ under the assumption that $N \in C^\eta_0([0,T];\mathcal B^2_{-\gamma} \cap \mathcal B^3_{-\gamma})$. Consequently, we obtain the upper bound $$\sup_{t \in [0,T]}\norm{y_\varepsilon(t)}^2_{L^2}\leq \frac{2C_y}{\kappa_\ast}\varepsilon^3 \sup_{0 \leq t \leq T} \norm{Z_\varepsilon(t)}_{L^3}^{3}.$$ Subsequent bounds, i.e. Theorems \ref{ShortLongBounds}, \ref{mainthm}, \ref{LongboundSelfSim} and Corollaries \ref{obound}, \ref{AppliedSelfSim}, and hold with $r+1 $ replaced by $3$ as upper summation index, $\mathcal U = L^2 \cap L^3$ and $\mathcal U_{-\gamma} = \mathcal B^2_{-\gamma} \cap \mathcal B^3_{-\gamma}$.
\end{remark}
\begin{remark}
If $u_0 \neq 0$, so that the initial condition is a perturbed travelling wave, it is clear that $\sup_{0 \leq t \leq T} \norm{y_\varepsilon(t)}_{L^2}$ cannot approach $0$. In this case, as $\varepsilon \to 0$, we would obtain the fixed upper bound $\norm{V_0-v^{TW}_0}_{\bm L^2}$ on $\norm{y_\varepsilon}_{L^2}$ and hence $\sup_{0 \leq t \leq T} \mathrm{d}(V(t),\Gamma) \leq  \varepsilon \sup_{0 \leq t \leq T} \norm{Z_\varepsilon(t)}_{L^{2}} + \norm{V_0-v^{TW}_0}_{\bm L^2}$. Although these bounds can be improved on by the above lemma, we generally lose information about higher-order errors. 
\end{remark}
\begin{proof}
We rely on the fact that the process $y_\varepsilon$ is a variational solution of the partial differential equation $$\partial_t y_\varepsilon(t) = A(t)y_\varepsilon(t) + \tilde R(t, y_\varepsilon(t) + \varepsilon Z_\varepsilon(t)).$$ Here, $$\begin{aligned}
\tilde R(t, u) &= \sum_{k=2}^r f_0^{(k)}(\tilde v^{TW}(t))u^k + f_1(u + \tilde v^{TW}(t)) - f_1(\tilde v^{TW}(t)) - f'(\tilde v^{TW}(t))u,
\end{aligned}$$
where $f_0$ is an odd-order polynomial with negative leading order coefficient and $f_1$ is twice differentiable with bounded first and second derivative.
Note that by Taylor's theorem there exists a measurable function $ \xi \colon [0,T] \times \mathbb R \rightarrow \mathbb R$ with $$\begin{aligned}
&f_1(y_\varepsilon(t) + \varepsilon Z_\varepsilon(t) + \tilde v^{TW}(t)) - f_1(\tilde v^{TW}(t)) - f_1'(\tilde v^{TW}(t))(y_\varepsilon(t) + \varepsilon Z_\varepsilon(t)) \\&= f_1''(\xi(t))(y_\varepsilon(t) + \varepsilon Z_\varepsilon(t))^2.
\end{aligned}$$
As $f_0$ has a negative leading coefficient, $f_0^{(r)} \equiv -a$ for some $a > 0$ and hence $$\begin{aligned}
\tilde R(t,y_\varepsilon(t) + \varepsilon Z_\varepsilon(t)) = &-a(y_\varepsilon(t) + \varepsilon Z_\varepsilon(t))^r + (f_1''(\xi(t)))(y_\varepsilon(t) + \varepsilon Z_\varepsilon(t))^2 \\ &+ \sum_{k=2}^{r-1} f_0^{(k)}(\tilde v^{TW}(t))(y_\varepsilon(t) + \varepsilon Z_\varepsilon(t))^k
\end{aligned}$$
The integrability properties $y_\varepsilon$ inherits from the solution $V$ of the equation \eqref{Nag} ensure that the Lions-Magenes lemma is applicable to this process. It follows that 
$$\begin{aligned}
\partial_t \norm{y_\varepsilon(t)}^2_{L^2} &= -\kappa_\ast \norm{y_\varepsilon(t)}^2_{\mathcal B^2_{1/2}} + \langle \tilde R(t, y_\varepsilon(t) + \varepsilon Z_\varepsilon(t)),y_\varepsilon(t) \rangle \\ &= - \kappa_\ast \left( \norm{(-A)^{1/2}y_\varepsilon(t)}^2_{L^2} + \norm{y_\varepsilon(t)}^2_{L^2}\right) + \langle \tilde R(t, y_\varepsilon(t) + \varepsilon Z_\varepsilon(t)),y_\varepsilon(t) \rangle
\end{aligned}$$
To arrive at the desired bound, we expand the polynomial terms and apply Young's inequality for products and the interpolation equality \eqref{IntPlIneq} to the residual $$\begin{aligned}
\left|\langle \tilde R\left(t,y_\varepsilon(t) + \varepsilon Z_\varepsilon(t)\right),y_\varepsilon(t) \rangle \right| &\leq  - a \int (y_\varepsilon(t) + \varepsilon Z_\varepsilon(t))^r y_\varepsilon(t) \d x \\ & \quad +\int |f_1''(\xi(t))|(|y_\varepsilon(t)|+\varepsilon |Z_\varepsilon(t)|)^2 |y_\varepsilon(t)|\d x
\\ & \quad + \sum_{k=2}^r \int |f^{(k)}_0(\tilde v^{TW}(t))| (|y_\varepsilon(t)|+\varepsilon |Z_\varepsilon(t)|)^k |y_\varepsilon(t)| \d x
\\ &= I + II + III, \text{ say.}
\end{aligned}$$ By boundedness of $f_1''$ and $f^{(k)}(\tilde v^{TW}(t))$ combined with convexity of $x^p$, $p \geq 1$, we find that 
$$\begin{aligned}
II + III &\leq C \sum_{k=2}^{r-1} \norm{y_\varepsilon(t)}^{k+1}_{L^{k+1}} + \langle \varepsilon^k |Z_\varepsilon(t)|^k, |y_\varepsilon(t)| \rangle\\
& \leq C \sum_{k=2}^{r-1} \norm{y_\varepsilon(t)}_{L^{k+1}}^{k+1}  + \varepsilon^{k+1}\norm{Z_\varepsilon(t)}_{L^{k+1}}^{k+1} \\
&\leq C \sum_{k=3}^{r} \norm{(-A)^{1/2}y_\varepsilon(t)}^{k \theta_k}_{L^2} \norm{y_\varepsilon}^{k(1-\theta_k)}_{L^2}  + C\sum_{k=3}^{r} \varepsilon^k\norm{Z_\varepsilon(t)}_{L^{k}}^{k} .
\end{aligned}$$
Here, $C$ denotes a constant that changes from line to line. Now, by Assumption \ref{IntPl}, $k \theta_k < 2$ for $3 \leq k \leq r $. Therefore, we can again apply Young's inequality with $p = \frac{2}{k \theta_k}$ and $q = \frac{2}{2-k \theta_k}$ and see that for suitably chosen $0 < \kappa < \kappa_\ast$ and $K_y > 0$ dependent on $\kappa$ and $r$,
$$
\begin{aligned}
II + III &\leq \kappa \norm{(-A)^{1/2}y_\varepsilon(t)}^2_{L^2} + K_y \sum_{k=3}^{r} \norm{y_\varepsilon(t)}^{k(1-\theta_k)\frac{2}{2-k \theta_k}}_{L^2} \\
&\quad + \underbrace{C}_{\eqqcolon C_{1,y}}\sum_{k=3}^{r} \varepsilon^k\norm{Z_\varepsilon(t)}_{L^{k}}^{k}.
\end{aligned}$$
Note that if $k \theta_k < 2$, then $k(1-\theta_k)\frac{2}{2-k \theta_k} > 2$ if and only if $k > 2$, so that we can be assured that all exponents of $\norm{y_\varepsilon(t)}_{L^2}$ are larger than $2$. 
It is left to finish the estimate 
$$\begin{aligned}
&I = - a \int (y_\varepsilon(t) + \varepsilon Z_\varepsilon(t))^r y_\varepsilon(t) \d x \\& \leq -a \norm{y_\varepsilon(t)}^{r+1}_{L^{r+1}}+ C a\sum_{k=1}^{\frac{r+1}{2}} \langle \varepsilon^{2k-1}|Z_\varepsilon(t)|^{2k-1} ,|y_\varepsilon(t)|^{r+1-(2k-1)} \rangle
\\ &\leq -a/2 \norm{y_\varepsilon(t)}^{r+1}_{L^{r+1}} + \underbrace{C}_{\eqqcolon C_{2,y}}\varepsilon^{r+1} \norm{Z_\varepsilon(t)}^{r+1}_{L^{r+1}}.
\end{aligned},$$ where we used Young's product inequality to shift mass onto the term $-a \norm{y_\varepsilon(t)}^{r+1}_{L^{r+1}}$.
Altogether, we can conclude that 
$$\begin{aligned}
& \partial_t \norm{y_\varepsilon(t)}^2_{L^2} \\ &\leq - \kappa_\ast \norm{y_\varepsilon(t)}^2_{\mathcal B^2_{1/2}} + I + II + III \\ &\leq -\kappa_\ast \norm{y_\varepsilon(t)}^{2}_{L^{2}} -\kappa \norm{(-A)^{1/2}y_\varepsilon(t)}^2_{L^2} -a/2 \norm{y_\varepsilon(t)}^{r+1}_{L^{r+1}} \\
& \quad + K_y \sum_{k=3}^{r} \norm{y_\varepsilon(t)}^{k(1-\theta_k)\frac{2}{2-k \theta_k}}_{L^2} + \underbrace{(C_{1,y} + C_{2,y})}_{\eqqcolon C_{y}}\sum_{k=3}^{r+1} \varepsilon^k\norm{Z_\varepsilon(t)}_{L^{k}}^{k}. \\
&\leq -\kappa_\ast \norm{y_\varepsilon(t)}^{2}_{L^{2}} + K_y \sum_{k=3}^{r} \norm{y_\varepsilon(t)}^{p_k}_{L^2} + C_y\sum_{k=3}^{r+1} \varepsilon^k \sup_{0 \leq t \leq T} \norm{Z_\varepsilon(t)}_{\mathcal U}^{k}.
\end{aligned}$$
for $p_k = k(1-\theta_k)\frac{2}{2-k \theta_k} > 2$, where we applied Hölder's inequality to obtain the last line.
\end{proof} 
\begin{proof}[Proof of Lemma \ref{Ybound}] By the preceding lemma, we know that $y_\varepsilon(t) = \norm{y_\varepsilon(t)}^2_{L^2}$ satisfies the differential inequality $$\begin{aligned}
\partial_t y &\leq - \kappa_\ast y + K_y \sum_{k = 3}^{r} y^{p_k/2} + C_y\sum_{k=3}^{r+1} \varepsilon^k\sup_{0 \leq t \leq T} \norm{Z_\varepsilon(t)}_{\mathcal U}^{k},
\end{aligned}$$ for $p_k/2 > 1$ and constants $C_y, K_y > 0$. Now, for $y \in [0,1]$, $p \rightarrow y^p$ is a decreasing function and hence for $p = \max\{p_3/2,...p_r/2\} > 1$ and $0 \leq y \leq 1$, $$\partial_t y \leq -{\kappa_\ast} y + (r-2)K_y y^p +  C_y\sum_{k=3}^{r+1} \varepsilon^k\sup_{0 \leq t \leq T} \norm{Z_\varepsilon(t)}_{\mathcal U}^{k}.$$
To demonstrate the claimed bound, note that for \begin{equation}
y \leq \left(\frac{{\kappa_\ast}}{2(r-2)K_y}\right)^{\frac{1}{p-1}}
\end{equation}
it holds that $$-{\kappa_\ast} y + (r-2)K_yy^p = y((r-2)K_yy^{p-1} - {\kappa_\ast}) \leq -\frac{{\kappa_\ast}}{2}y.$$ Therefore, direct calculation yields that if \begin{equation} \label{Ycond}
\frac{2C_y}{\kappa_\ast} \sum_{k=3}^{r+1} \varepsilon^k\sup_{0 \leq t \leq T} \norm{Z_\varepsilon(t)}_{\mathcal U}^{k} \leq y \leq \left(\frac{{\kappa_\ast}}{2(r-2)K_y}\right)^{\frac{1}{p-1}},
\end{equation} then  $$ -{\kappa_\ast} y + (r-2)K y^p +  C_y\sum_{k=3}^{r+1} \varepsilon^k\sup_{0 \leq t \leq T} \norm{Z_\varepsilon(t)}_{\mathcal U}^{k}  \leq 0.$$ Hence, by standard comparison theorems for first-order ordinary differential equations, it follows that
$$\sup_{t \in [0,T]} y \leq \frac{2C_y}{\kappa_\ast} \sum_{k=3}^{r+1} \varepsilon^k \sup_{0 \leq t \leq T} \norm{Z_\varepsilon(t)}_{\mathcal U}^{k} \vee \underbrace{y_0}_{=0}.$$
Now, it remains to make the dependence on the the supremum of $\sup_{0 \leq t \leq T} \norm{Z_\varepsilon(t)}_{\mathcal U}$ explicit. To this end, consider the function $$\ell(z) \coloneqq \frac{2C_y}{{\kappa_\ast}}(z^3 + \dots + z^{r+1}).$$ Evidently, this map is increasing and hence invertible, so that condition \eqref{Ycond} is equivalent to $$\varepsilon \sup_{0 \leq t \leq T} \norm{Z_\varepsilon(t)}_{\mathcal U} < z_\ast \coloneqq \ell^{-1}\left(\left(\frac{{\kappa_\ast}}{2(r-2)K_y}\right)^{\frac{1}{p-1}}\right),$$ which finalises the proof.
\end{proof}
\begin{corollary} \label{obound}
Let $z_\ast, C_y, \kappa_\ast > 0$ be as in Proposition \ref{Ybound}. Then, if $$\sup_{0 \leq t \leq T} \norm{Z_\varepsilon(t)}_{\mathcal U} \leq \frac{z_\ast}{\varepsilon},$$ 
it follows that
$$\begin{aligned}
\sup_{0 \leq t \leq T} \mathrm{d}(V(t),\Gamma) &\leq  \varepsilon \sup_{0 \leq t \leq T} \norm{Z_\varepsilon(t)}_{L^{2}} + \sqrt{\frac{2C_y}{\kappa_\ast}} \sum_{k=3}^{r+1} \varepsilon^{k/2}  \sup_{0 \leq t \leq T} \norm{Z_\varepsilon(t)}^{k/2}_{\mathcal U}.
\end{aligned}$$
\end{corollary}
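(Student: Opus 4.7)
The plan is to combine the decomposition $\tilde U(t) = \varepsilon Z_\varepsilon(t) + y_\varepsilon(t)$ from Proposition \ref{DecompositionMain} with the nonlinear residual bound from Proposition \ref{Ybound} via a triangle inequality argument. First I would note that since $\tilde v^{TW}(t) = v^{TW}_{C(t)} \in \Gamma$ for every $t \in [0,T]$, the definition of the distance to the orbit gives
$$d(V(t), \Gamma) = \inf_{\phi \in \mathbb R} \norm{V(t) - v^{TW}_\phi}_{L^2} \leq \norm{V(t) - \tilde v^{TW}(t)}_{L^2} = \norm{\tilde U(t)}_{L^2}.$$
Then, applying the triangle inequality to the decomposition, $\norm{\tilde U(t)}_{L^2} \leq \varepsilon \norm{Z_\varepsilon(t)}_{L^2} + \norm{y_\varepsilon(t)}_{L^2}$, and taking the supremum over $[0,T]$ yields
$$\sup_{0 \leq t \leq T} d(V(t), \Gamma) \leq \varepsilon \sup_{0 \leq t \leq T} \norm{Z_\varepsilon(t)}_{L^2} + \sup_{0 \leq t \leq T} \norm{y_\varepsilon(t)}_{L^2}.$$

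Next I would control the residual term using Proposition \ref{Ybound}: under the hypothesis $\sup_{0 \leq t \leq T} \norm{Z_\varepsilon(t)}_{\mathcal U} \leq z_\ast/\varepsilon$, we have
$$\sup_{0 \leq t \leq T} \norm{y_\varepsilon(t)}^2_{L^2} \leq \frac{2 C_y}{\kappa_\ast} \sum_{k=3}^{r+1} \varepsilon^k \sup_{0 \leq t \leq T} \norm{Z_\varepsilon(t)}^k_{\mathcal U}.$$
To transfer this into the desired linear form, I would take square roots and use the elementary subadditivity of the square root, $\sqrt{a_1 + \cdots + a_m} \leq \sqrt{a_1} + \cdots + \sqrt{a_m}$ (which follows by induction from $\sqrt{a+b} \leq \sqrt{a} + \sqrt{b}$ for nonnegative reals). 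Applied termwise, this gives
$$\sup_{0 \leq t \leq T} \norm{y_\varepsilon(t)}_{L^2} \leq \sqrt{\frac{2 C_y}{\kappa_\ast}} \sum_{k=3}^{r+1} \varepsilon^{k/2} \left(\sup_{0 \leq t \leq T} \norm{Z_\varepsilon(t)}_{\mathcal U}\right)^{k/2}.$$

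Finally I would substitute this estimate back into the triangle inequality above to obtain the claimed bound. There is no real obstacle here since the work has already been done in Propositions \ref{DecompositionMain} and \ref{Ybound}; the only subtle point worth a brief comment is the use of subadditivity of the square root on the sum of the powers $\varepsilon^k \norm{Z_\varepsilon}^k_{\mathcal U}$, which produces the linear (in the square root) expression in $k$ appearing on the right-hand side rather than a single square-rooted sum.
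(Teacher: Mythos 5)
Your argument is correct and follows exactly the paper's route: bound $d(V(t),\Gamma)$ by $\norm{\tilde U(t)}_{L^2}$, apply the triangle inequality to the decomposition $\tilde U = \varepsilon Z_\varepsilon + y_\varepsilon$, invoke Proposition \ref{Ybound}, and finish with subadditivity of the square root. You have simply spelled out the steps the paper compresses into one line.
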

\begin{proof}
Since $\mathrm{d}(V(t), \Gamma) \leq \norm{\tilde U(t)}_{L^2} \leq \varepsilon \norm{Z_\varepsilon(t)}_{L^2} + \norm{y_\varepsilon(t)}_{L^2},$ this corollary follows immediately from Proposition \ref{Ybound}.
\end{proof}
\subsection{Growth estimates on the first order approximation}
By virtue of the bounds we derived in Corollary \ref{obound}, it already follows that as $\varepsilon \to 0$, $$ \sup_{0 \leq t \leq T} \norm{\tilde U(t)}_{L^2} \leq  \sup_{0 \leq t \leq T} \varepsilon \norm{Z_\varepsilon(t)}_{L^2} + \sup_{0 \leq t \leq T}\norm{y_\varepsilon(t)}_{L^2} \to 0.$$ However, these upper bounds are quite crude and the dependence on the driver $N$ is unclear. In this section, we make this relationship more explicit by deriving bounds on the supremum of the norm of $Z_\varepsilon(t)$ dependent on the Hölder norm of the path $N$.

\begin{proposition} \label{Zbound}
There exists a constant $C_Z$ dependent on $\norm{\nu\cdot\nabla v^{TW}_0}_{L^2}$, $\norm{f'(v^{TW}_0)}_{L^\infty}$ and $\kappa_\ast > 0$ such that 
\begin{equation}\label{Zboundeq}
\sup_{0 \leq t \leq T} \norm{Z_\varepsilon(t)}_{\mathcal U} \leq  (1+C_Z(1+\lambda)) \sup_{0 \leq t \leq T} \norm{N_{A-\lambda}(t)}_{\mathcal U}
\end{equation}
\end{proposition}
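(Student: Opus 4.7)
The plan is to estimate $Z_\varepsilon(t)$ directly from its defining representation
$$Z_\varepsilon(t) = N_{A-\lambda}(t) + \int_0^t E(t,s)\, B(s)\, N_{A-\lambda}(s)\, ds,\qquad B(s) := \tilde P(s) + f'(v^{TW}(s)) + \lambda,$$
by combining uniform operator bounds on $B(s)$ with the exponential contraction of the evolution system $E$ established in Proposition \ref{EvSysGen}. After applying the triangle inequality to separate the free term $N_{A-\lambda}(t)$, which is already bounded by $\sup_s \|N_{A-\lambda}(s)\|_{\mathcal U}$, it suffices to bound the convolution integral in both $L^2$ and $L^{r+1}$.

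For the operator $B(s)$, I would use Cauchy-Schwarz on $\tilde P(s) u = m\langle u, \nu \cdot \nabla \tilde v^{TW}(s)\rangle_{L^2} \, \nu \cdot \nabla \tilde v^{TW}(s)$ to obtain $\|\tilde P(s)\|_{L(L^2)} \leq m \|\nu \cdot \nabla v^{TW}_0\|_{L^2}^2$, using translation invariance of the $L^2$-norm. Combined with $\|f'(v^{TW}(s))\|_{L^\infty} = \|f'(v^{TW}_0)\|_{L^\infty}$ (from Assumption \ref{TW} and Assumption \ref{NemytskiiAss}), this yields $\|B(s)\|_{L(L^2)} \leq C_B + \lambda$ with $C_B$ independent of $s$. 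Using the $L^2$-dissipativity $\|E(t,s) u\|_{L^2} \leq e^{-\kappa_\ast(t-s)} \|u\|_{L^2}$ derived during the proof of Proposition \ref{EvSysGen}, one arrives at
$$\left\| \int_0^t E(t,s)\, B(s)\, N_{A-\lambda}(s)\, ds\right\|_{L^2} \leq (C_B + \lambda) \sup_{0 \leq s \leq T}\|N_{A-\lambda}(s)\|_{L^2} \int_0^t e^{-\kappa_\ast(t-s)}\, ds \leq \frac{C_B + \lambda}{\kappa_\ast}\sup_{0 \leq s \leq T}\|N_{A-\lambda}(s)\|_{\mathcal U}.$$

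For the parallel $L^{r+1}$ bound, I would derive an exponentially contractive estimate for $E(t,s)$ on $L^{r+1}$ by testing the evolution $\partial_t x = A(t) x$ against $|x|^{r-1}\mathrm{sgn}(x)$, exploiting the $L^{r+1}$-dissipativity of $A$ from Assumption \ref{AnaSem} and absorbing the two bounded perturbations $f'(\tilde v^{TW}(s))$ and $\tilde P(s)$ via Young's product inequality. Putting the two norms together and using $C_B + \lambda \leq (C_B \vee 1)(1 + \lambda)$ gives \eqref{Zboundeq} with $C_Z := (C_B \vee 1)/\kappa_\ast$ depending only on $\|\nu \cdot \nabla v^{TW}_0\|_{L^2}$, $\|f'(v^{TW}_0)\|_{L^\infty}$, $m$ and $\kappa_\ast$.

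The main obstacle is handling the $L^{r+1}$ direction: the rank-one perturbation $\tilde P(s)$ produces elements of $\mathrm{span}\{\nu \cdot \nabla \tilde v^{TW}(s)\}$, which Assumption \ref{Intgrbl} only places in $L^2$ and not a priori in $L^{r+1}$. Making the $L^{r+1}$-contractivity of $E(t,s)$ rigorous therefore either requires harmless additional regularity of $\nu\cdot \nabla v^{TW}_0$ (indeed satisfied in all the examples of Section \ref{TWSection}), or, alternatively, use of the smoothing properties of the analytic semigroup generated by $A$ on $L^{r+1}$ to transfer an $L^2$-bound on the integrand into an $L^{r+1}$-bound on the convolution at the price of an integrable singularity $(t-s)^{-\alpha}$, which remains compatible with the exponential decay of $E$.
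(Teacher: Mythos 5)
Your $L^2$ estimate is fine and matches the spirit of the paper, but your primary route to the $L^{r+1}$ bound has a genuine gap, and the ``alternative'' you mention in the last paragraph --- which is in fact close to what the paper does --- is left undeveloped and slightly misdescribed.

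The issue with the energy argument in $L^{r+1}$: Assumption~\ref{AnaSem} only asserts that $-A_p$ is \emph{sectorial} on $L^p$, not dissipative, and there is no $L^{r+1}$ analogue of the spectral gap Assumption~\ref{SpcGp} to absorb the bounded zero-order term $f'(\tilde v^{TW}(s))$ (whose sign is not controlled). Without that, ``absorbing via Young's inequality'' can only give an a~priori estimate with an exponentially \emph{growing} factor, which is useless for a $T$-uniform bound. Even more fundamentally, the rank-one operator $\tilde P(s)u = m\langle u, \nu\cdot\nabla \tilde v^{TW}(s)\rangle_{L^2}\,\nu\cdot\nabla\tilde v^{TW}(s)$ requires $\nu\cdot\nabla v^{TW}_0 \in L^{(r+1)'}$ for the pairing to make sense on $L^{r+1}$, and $\nu\cdot\nabla v^{TW}_0 \in L^{r+1}$ for the output to land in $L^{r+1}$; Assumption~\ref{Intgrbl} gives neither, only $L^2$. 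So it is not clear that $(A(t))_t$ even generates an evolution system on $L^{r+1}$, let alone a contractive one. You flag the last point but not the first two.

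The paper sidesteps all of this by never using $E(t,s)$ as an operator on $L^{r+1}$. Instead, Assumption~\ref{IntPl} (the Gagliardo--Nirenberg interpolation $\|u\|_{L^p} \leq C_p\|(-A)^{1/2}u\|_{L^2}^{\theta_p}\|u\|_{L^2}^{1-\theta_p}$) is applied to the integrand, reducing the $L^{r+1}$ estimate to a pair of $L^2$-based quantities. The fractional-power factor $\|(-A)^{1/2}E(t,s)B(s)x\|_{L^2}$ is then controlled by the smoothing estimate for evolution systems generated by bounded perturbations of sectorial operators (Theorem~7.1.3 in \cite{henry81:GTS}), giving $e^{-\kappa_\ast(t-s)}(t-s)^{-1/2}\|x\|_{L^2}$ up to a constant $\lesssim (1+\lambda)$. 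The resulting integral $\int_0^\infty e^{-\kappa_\ast s}s^{-\theta_p/2}\,ds$ converges (note $\theta_p\le 1$) and produces the Gamma-function constant $C_Z = CM^{\theta_p}\kappa_\ast^{-(1-\theta_p/2)}\Gamma(1-\theta_p/2)$ --- not the simple $C_B/\kappa_\ast$ your clean $L^2$ bound would suggest. Your closing remark about ``smoothing of the analytic semigroup on $L^{r+1}$'' gestures in the right direction, but the mechanism the paper actually uses stays entirely within $L^2$ and relies on the interpolation Assumption~\ref{IntPl} rather than on $L^p$--$L^q$ heat-kernel bounds; you would need to make that precise for the proof to close.
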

\begin{proof}
We show the bound for the individual terms which constitute $$ \norm{Z_\varepsilon(t)}_{\mathcal U} \coloneqq \norm{Z_\varepsilon(t)}_{L^{2}} \vee \norm{Z_\varepsilon(t)}_{L^{r+1}}.$$ Let $p \in \Set{2,r+1}$. We first apply Bochner's inequality to find that 
$$
\begin{aligned}
&\norm{Z_\varepsilon(t)}_{L^{p}\mathcal O)} \leq  \int_0^t \norm{E(t,s)\left(\tilde P(s) + f'(v^{TW}(s)) + \lambda \right)N_{A-\lambda}(s)}_{L^{p}} \d s + \norm{N_{A-\lambda}(s)}_{L^{p}}.
\end{aligned}
$$
The second term on the right hand side trivially satisfies
$$\begin{aligned}
\norm{N_{A-\lambda}(t)}_{L^{p}} &\leq \sup_{0 \leq t \leq T} \norm{N_{A-\lambda}(t)}_{L^{p}}.
\end{aligned}$$ Now, to find a bound on  $$\int_0^t \norm{E(t,s)\left(\tilde P(s) + f'(v^{TW}(s)) + \lambda \right)N_{A-\lambda}(s)}_{L^{p}} \d s,$$ we can apply the interpolation inequality \eqref{IntPlIneq} to obtain $$\begin{aligned}
\norm{u}_{L^{p}} \leq C \norm{(-A)^{1/2}u}^{\theta_{p}}_{L^2} \norm{u}^{1-\theta_{p}}_{L^2}
\end{aligned}$$ and control the $L^{p}$-norm:
$$
\begin{aligned}
&\int_0^t \norm{E(t,s)\left(\tilde P(s) + f'(v^{TW}(s)) + \lambda \right)N_{A-\lambda}(s)}_{L^{p}} \d s \\&\leq  C \int_0^t \norm{(-A)^{1/2}E(t,s)\left(\tilde P(s) + f'(v^{TW}(s)) + \lambda \right)N_{A-\lambda}(s)}^{\theta_{p}}_{L^2}\\&\quad \quad \quad \times \norm{E(t,s)\left(\tilde P(s) + f'(v^{TW}(s)) + \lambda \right)N_{A-\lambda}(s)}^{1-\theta_{p}}_{L^2} \d s.
\end{aligned}$$
Due to sectoriality of the operator $A$, standard semigroup estimates (\cite{hairer2023introductionstochasticpdes}, Prop. 4.40) yield that there exists a constant $M$ independent of $T$ such that $$\norm{(-A)^{1/2} S(t)x}_{L^2} \leq  \frac{M}{\sqrt{t}}\norm{x}_{L^2}.$$ Combining this with the trivial estimate $$\sup_{0 \leq t \leq T} \norm{\tilde P(s) + f'(v^{TW}(s)) + \lambda}_{L^2 \to L^2} \leq C(1+\lambda)$$ for some constant $C$ independent of $T$, the exponential decay of $E(t,s)$ and Theorem 7.1.3 in \cite{henry81:GTS} imply that $$ \norm{(-A)^{1/2}E(t,s)\left(\tilde P(s) + f'(v^{TW}(s)) + \lambda \right)x}_{L^2} \leq MC(1+\lambda) \frac{e^{-\kappa_\ast (t-s)}}{(t-s)^{1/2}} \norm{x}_{L^2}.$$
Therefore, we arrive at the conclusion that
$$
\begin{aligned}
&\int_0^t \norm{E(t,s)\left(\tilde P(s) + f'(v^{TW}(s)) + \lambda \right)N_{A-\lambda}(s)}_{L^{p}} \d s \\ &\leq CM^{\theta_{p}}(1+\lambda)\int_0^t e^{- \kappa_\ast (t-s)}\frac{\norm{N_{A-\lambda}(s)}_{L^2}}{(t-s)^{\theta_{p}/2}} \d s \\ 
& \leq CM^{\theta_{p}}(1+\lambda)\int_0^\infty \frac{e^{- \kappa_\ast s}}{s^{\theta_{p}/2}} \d s  \sup_{0 \leq t \leq T} \norm{N_{A-\lambda}(t)}_{L^2} \\
&\leq C_Z(1 + \lambda) \sup_{0 \leq t \leq T} \norm{N_{A-\lambda}(t)}_{L^2},
\end{aligned}
$$
for $C_Z \coloneqq CM^{\theta_{p}} \kappa_\ast^{-(1-\theta_p/2)} \Gamma(1-\theta_p/2)$, which is well-defined since $\theta_p \leq 1$. The desired inequality now follows.
\end{proof}
We can improve on the statement of Corollary \ref{obound} using the above Proposition \ref{Zbound} and estimates on the growth of the norm of $N_{A-\lambda}$ in terms of the Hölder norm of the driver $N$ and the time $T$. By distinguishing between the cases $\lambda = 0$ and $\lambda > 0$, we find respective estimates of the right hand side of \eqref{Zboundeq}, in particular $$(1+\lambda) \sup_{0 \leq t \leq T} \norm{N_{A-\lambda}(t)}_{\mathcal U}.$$ This leads to the following improved pathwise stability result, which states that for small times $T$, noise amplitudes $\varepsilon$ and drivers $N$ with small Hölder norm, $\sup_{0 \leq t \leq T} d(V(t), \Gamma)$ remains small.  
\begin{theorem}
\label{ShortLongBounds}Let $z_\ast, C_y, \kappa_\ast > 0$ be as in Proposition \ref{Ybound}. There exist constants $C_S, C_L > 0$ dependent on $\eta$ and $\gamma$ but independent of $\varepsilon, T>0$ such that whenever $$(C_S T^{\eta - \gamma} \wedge C_L) \norm{N}_{C^\eta_0([0,T];\mathcal U_{-\gamma})} \leq \frac{z_\ast}{\varepsilon},$$ 
then both the short time bound
\begin{equation} \label{ShortBound}
\begin{aligned}
\sup_{0 \leq t \leq T} d(V(t), \Gamma) &\leq  C_S T^{\eta-\gamma}\varepsilon \norm{N}_{C^\eta_0([0,T];\mathcal U_{-\gamma})}  + \rho \sum_{k=3}^{r+1} (C_S T^{\eta-\gamma}\varepsilon)^{k/2} \norm{N}_{C^\eta_0([0,T];\mathcal U_{-\gamma})}^{k/2}
\end{aligned}
\end{equation} 
and the long time bound
\begin{equation} \label{LongBound}
\begin{aligned}
\sup_{0 \leq t \leq T} d(V(t),\Gamma) &\leq C_L \varepsilon \norm{N}_{C^\eta_0([0,T];U_{-\gamma})}  +\rho \sum_{k=3}^{r+1}(C_L\varepsilon)^{k/2}\norm{N}^{k/2}_{C^\eta_0([0,T];\mathcal U_{-\gamma})}
\end{aligned}
\end{equation}
hold for $\rho = \sqrt{\frac{2C_y}{\kappa_\ast}}$.
\end{theorem}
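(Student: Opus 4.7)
The plan is to combine the pathwise stability bound of Corollary~\ref{obound} with the estimate of Proposition~\ref{Zbound} to reduce the problem to controlling $\sup_{0\le t\le T}\norm{N_{A-\lambda}(t)}_{\mathcal U}$, and then to exploit the freedom in the damping parameter $\lambda\ge 0$ (recalling that $Z_\varepsilon$ itself is independent of $\lambda$) to obtain two different estimates tailored to the short and long time regimes. Concretely, chaining those two results yields, for $M_\lambda\coloneqq \sup_{0\le t\le T}\norm{N_{A-\lambda}(t)}_{\mathcal U}$ and under the smallness assumption $(1+C_Z(1+\lambda))M_\lambda\le z_\ast/\varepsilon$, the estimate
\[
\sup_{0\le t\le T} d(V(t),\Gamma) \le \varepsilon(1+C_Z(1+\lambda))\,M_\lambda + \rho\sum_{k=3}^{r+1}\bigl[(1+C_Z(1+\lambda))\varepsilon\bigr]^{k/2} M_\lambda^{k/2}.
\]

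For the short-time bound \eqref{ShortBound} I would take $\lambda=0$, so $M_0=\sup_{t}\norm{N_A(t)}_{\mathcal U}$, and apply Proposition~\ref{YoungConv} with $\delta=0$ in both scales $\mathcal B^2_\bullet$ and $\mathcal B^{r+1}_\bullet$, using the remark immediately following that proposition. This produces $M_0\le C_2\, T^{\eta-\gamma}\norm{N}_{C^\eta_0([0,T];\mathcal U_{-\gamma})}$, and setting $C_S\coloneqq(1+C_Z)C_2$ and substituting into the inequality above gives \eqref{ShortBound}; the smallness assumption is subsumed by $C_S T^{\eta-\gamma}\norm{N}_{C^\eta_0}\le z_\ast/\varepsilon$.

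The harder step is the $T$-independent long-time bound \eqref{LongBound}. Here I would fix some positive $\lambda$ (for instance $\lambda=1$) and apply the integration by parts identity of Proposition~\ref{ibp} to the semigroup $e^{-\lambda t}S(t)$, whose generator $A-\lambda$ remains injective and sectorial because the spectrum of $-A$ lies in $\{\operatorname{Re} z>0\}$ and is merely shifted by $\lambda$. This rewrites
\[
N_{A-\lambda}(t) = \int_0^t (A-\lambda)e^{-\lambda(t-s)}S(t-s)\bigl(N(s)-N(t)\bigr)\d s + e^{-\lambda t}S(t)N(t).
\]
Using the analytic semigroup bounds $\norm{(-A)^\alpha S(r)}_{\mathcal B^p_{-\gamma}\to L^p}\lesssim r^{-\alpha-\gamma}$ for $\alpha\in\{0,1\}$ together with the Hölder estimate $\norm{N(s)-N(t)}_{\mathcal B^p_{-\gamma}}\le (t-s)^\eta\norm{N}_{C^\eta_0}$, the $L^p$-norm of the first term is controlled by a sum of two integrals of the form $\int_0^t\bigl[(t-s)^{\eta-1-\gamma}+\lambda(t-s)^{\eta-\gamma}\bigr]e^{-\lambda(t-s)}\d s$, which, after extension to $[0,\infty)$, reduce to $\lambda^{-(\eta-\gamma)}\bigl[\Gamma(\eta-\gamma)+\Gamma(\eta-\gamma+1)\bigr]$; this is finite because $\eta>\gamma$ and, crucially, independent of $T$. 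The boundary contribution is bounded by $\sup_{t\ge 0}t^{\eta-\gamma}e^{-\lambda t}\cdot \norm{N}_{C^\eta_0}<\infty$. Running the same computation in both scales yields $M_\lambda\le C_L'\norm{N}_{C^\eta_0}$ with $C_L'$ independent of $T$, and setting $C_L\coloneqq(1+2C_Z)C_L'$ produces \eqref{LongBound} under $C_L\norm{N}_{C^\eta_0}\le z_\ast/\varepsilon$.

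The combined smallness assumption $(C_ST^{\eta-\gamma}\wedge C_L)\norm{N}_{C^\eta_0}\le z_\ast/\varepsilon$ ensures that at least one of the two sufficient conditions for Corollary~\ref{obound} is verified, after which both estimates follow along the two independent routes above. The main technical obstacle is the long-time estimate: one must justify the applicability of Proposition~\ref{ibp} to the shifted operator $A-\lambda$, and then balance the singularity $(t-s)^{\eta-1-\gamma}$ at $s=t$ (integrable precisely because $\eta>\gamma$) against the exponential damping, which is what provides both integrability as $s\to 0$ and the uniformity of the resulting constant in $T$.
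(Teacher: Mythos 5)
Your proposal is correct and follows essentially the same route as the paper: Corollary~\ref{obound} combined with Proposition~\ref{Zbound} reduces the problem to bounding $\sup_{t}\norm{N_{A-\lambda}(t)}_{\mathcal U}$, with $\lambda=0$ plus Proposition~\ref{YoungConv} giving the $T^{\eta-\gamma}$ short-time estimate and $\lambda>0$ plus the integration-by-parts formula of Proposition~\ref{ibp} giving the $T$-independent long-time estimate; the inline computation you carry out for the damped convolution is exactly the content of Lemma~\ref{ConvBound}. The only cosmetic difference is that you fix $\lambda=1$, whereas the paper optimizes $\lambda$ to the value $(\eta-\gamma)/(1-(\eta-\gamma))$ to obtain the explicit constant $\tilde K(\eta-\gamma)$, which does not change the structure of the argument.
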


\begin{remark}
Tracing through the calculations in the proof of Theorem 1 in \cite{Gubinelli2004YoungIA}, we find that the coefficient $C_S$ in the preceding statement depends on $\eta-\gamma$ with $$C_S \gtrsim \int_0^1 \frac{1}{t^{1-(\eta-\gamma)}} \d t,$$ which means that $C_S \to \infty$ as $\eta \to 0$. Similarly, Lemma \ref{ConvBound} below shows that $C_L$ diverges as $\eta-\gamma \to 0$. However, for fixed $\gamma$, $C_L$ does not diverge as $\eta \to 1$.
\end{remark}

To prove the long-time estimate \eqref{LongBound}, we utilise the following bound on the maximal norm of the convolution and thereby reach an upper bound in terms of a multiple of the Hölder norm of $N$ only. 
\begin{lemma} \label{ConvBound}
Let $0 \leq \gamma < \eta \leq 1$ and $\lambda > 0$. For $p \in \Set{2,r+1}$, the convolution $N_{A-\lambda}$ satisfies $$\begin{aligned}
&\sup_{t \in [0,T]} \norm{N_{A-\lambda}(t)}_{L^p} \leq \lambda^{-(\eta-\gamma)}K(\eta-\gamma) \norm{N}_{C^{\eta}_0([0,T];\mathcal B^p_{-\gamma})}
\end{aligned}$$ for $$K(\eta-\gamma) = \Gamma(\eta-\gamma) + \Gamma(1+\eta-\gamma) + (\eta-\gamma)^{-(\eta-\gamma)}.$$
\end{lemma}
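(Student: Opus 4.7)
The plan is to apply the integration by parts identity of Proposition \ref{ibp} to the damped semigroup $(e^{-\lambda t}S(t))_{t \geq 0}$, whose generator $A - \lambda$ inherits injectivity and sectoriality from $A$ (a shift of the spectrum leaves both properties intact). This yields
\begin{equation*}
N_{A-\lambda}(t) = \int_0^t (A - \lambda)\, e^{-\lambda(t-s)} S(t-s)\bigl(N(s)-N(t)\bigr)\,\d s + e^{-\lambda t} S(t) N(t).
\end{equation*}
Splitting $(A-\lambda)$ into $A$ and $-\lambda I$ produces three terms, which I would estimate in $L^p$ separately.

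For the term containing $A$, I would combine the analytic semigroup bound $\norm{(-A)^{1+\gamma} S(u)}_{L^p \to L^p} \leq M u^{-(1+\gamma)}$ with the Hölder estimate $\norm{N(s)-N(t)}_{\mathcal B^p_{-\gamma}} \leq \norm{N}_{C^\eta_0}(t-s)^\eta$. After the substitution $u = t-s$, the residual integral $\int_0^t e^{-\lambda u} u^{\eta - 1 - \gamma}\,\d u$ converges at $0$ because $\eta > \gamma$ and is dominated by $\int_0^\infty e^{-\lambda u} u^{\eta - 1 - \gamma}\,\d u = \lambda^{-(\eta-\gamma)}\Gamma(\eta-\gamma)$. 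The $-\lambda$-term is handled analogously using $\norm{(-A)^\gamma S(u)}_{L^p \to L^p} \leq M u^{-\gamma}$, yielding $\lambda \cdot \lambda^{-(1+\eta-\gamma)}\Gamma(1+\eta-\gamma) = \lambda^{-(\eta-\gamma)}\Gamma(1+\eta-\gamma)$ times $\norm{N}_{C^\eta_0}$.

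For the boundary term, since $N(0) = 0$ we have $\norm{N(t)}_{\mathcal B^p_{-\gamma}} \leq t^\eta \norm{N}_{C^\eta_0}$, and the bound $\norm{S(t)u}_{L^p} \leq M t^{-\gamma}\norm{u}_{\mathcal B^p_{-\gamma}}$ then gives $\norm{e^{-\lambda t}S(t)N(t)}_{L^p} \leq e^{-\lambda t}t^{\eta-\gamma}\norm{N}_{C^\eta_0}$. The function $t \mapsto e^{-\lambda t}t^{\eta-\gamma}$ attains its maximum at $t^{\ast} = (\eta-\gamma)/\lambda$, with value $e^{-(\eta-\gamma)}(\eta-\gamma)^{\eta-\gamma}\lambda^{-(\eta-\gamma)}$, which I would crudely bound above by $(\eta-\gamma)^{-(\eta-\gamma)}\lambda^{-(\eta-\gamma)}$ using that $x^x e^{-x} \leq 1 \leq x^{-x}$ on $(0,1]$. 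Summing the three contributions gives the claim with the stated $K(\eta-\gamma)$. There is no essential obstacle here beyond bookkeeping of the three integrals; conceptually, the $\lambda$-damping is what converts the smoothing estimates on $[0,T]$ into uniform-in-$T$ Gamma integrals, at the cost of a $\lambda^{-(\eta-\gamma)}$ prefactor.
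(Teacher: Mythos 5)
Your proposal matches the paper's argument step for step: apply the integration-by-parts formula of Proposition \ref{ibp} to the damped semigroup $e^{-\lambda t}S(t)$, split $(A-\lambda)$ into $A$ and $-\lambda I$, bound the two integral terms by $\lambda^{-(\eta-\gamma)}\Gamma(\eta-\gamma)$ and $\lambda^{-(\eta-\gamma)}\Gamma(1+\eta-\gamma)$ via the Gamma integral, and optimise $e^{-\lambda t}t^{\eta-\gamma}$ at $t^\ast=(\eta-\gamma)/\lambda$ for the boundary term. The only cosmetic difference is that you keep track of the generic semigroup constant $M$ explicitly before dropping it, exactly as the paper implicitly normalises it away.
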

As a direct consequence of Lemma \ref{ConvBound}, we can refine \eqref{Zboundeq} and estimate the norm of $Z_\varepsilon$ in terms of the Hölder norm of the driver $N$.
\begin{corollary}
Let $0 \leq \gamma < \eta \leq 1$. Then \begin{equation} \label{lambgr0bound}
\sup_{0 \leq t \leq T}\norm{Z_\varepsilon(t)}_{\mathcal U} \leq 2C_Z \tilde K(\eta-\gamma)\norm{N}_{C^\eta_0([0,T];\mathcal U_{-\gamma})},
\end{equation}
with $$\tilde K(\eta-\gamma) \coloneqq K(\eta-\gamma) \frac{(\eta-\gamma)^{\eta-\gamma}}{(1-(\eta-\gamma))^{1-(\eta-\gamma)}},$$
where $C_Z$ and $K(\eta-\gamma)$ are the constants given by Proposition \ref{Zbound} and Lemma \ref{ConvBound}, respectively. 
\end{corollary}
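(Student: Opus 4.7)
The plan is straightforward: combine Proposition \ref{Zbound} with Lemma \ref{ConvBound}, then minimize over $\lambda > 0$, since the left-hand side $\sup_{t\in[0,T]}\norm{Z_\varepsilon(t)}_{\mathcal U}$ carries no dependence on the damping parameter. Applying Lemma \ref{ConvBound} separately with $p = 2$ and $p = r+1$, and noting that the constant $K(\eta-\gamma)$ is the same in both cases, I would obtain
\begin{equation*}
\sup_{t\in[0,T]} \norm{N_{A-\lambda}(t)}_{\mathcal U} \leq \lambda^{-(\eta-\gamma)} K(\eta-\gamma)\, \norm{N}_{C^\eta_0([0,T];\mathcal U_{-\gamma})}.
\end{equation*}
Substituting into \eqref{Zboundeq} reduces the problem to a scalar estimate: for every $\lambda > 0$,
\begin{equation*}
\sup_{t\in[0,T]}\norm{Z_\varepsilon(t)}_{\mathcal U} \leq \bigl(1 + C_Z(1+\lambda)\bigr)\lambda^{-\alpha}\, K(\alpha)\, \norm{N}_{C^\eta_0([0,T];\mathcal U_{-\gamma})},
\end{equation*}
where I write $\alpha = \eta - \gamma \in (0,1]$ for brevity.

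The remaining step is to optimize the scalar prefactor $g(\lambda) = (a + b\lambda)\lambda^{-\alpha}$ in $\lambda$, with $a = 1+C_Z$ and $b = C_Z$. Elementary differentiation yields a unique critical point $\lambda^\ast = a\alpha/[b(1-\alpha)]$, at which $g$ attains its global minimum
\begin{equation*}
g(\lambda^\ast) = \frac{a^{1-\alpha} b^\alpha}{\alpha^\alpha(1-\alpha)^{1-\alpha}}.
\end{equation*}
Since $b \leq a$, we have the crude bound $a^{1-\alpha}b^\alpha \leq a = 1 + C_Z$, which is in turn bounded by $2C_Z$ once we assume $C_Z \geq 1$ (this can be arranged by enlarging the constant produced by Proposition \ref{Zbound} at no cost, since the statement is an upper bound). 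Substituting back yields exactly the claimed prefactor $2 C_Z \tilde K(\eta-\gamma)$ upon reading the combinatorial factor $\alpha^\alpha (1-\alpha)^{1-\alpha}$ through the definition of $\tilde K$.

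I do not anticipate any serious obstacle. The argument reduces to a one-variable calculus exercise once the two preceding results are in place; the uniformity of the $L^2$- and $L^{r+1}$-bounds is already encoded in the cited statements, and the constants combine cleanly. The only minor subtlety arises at the boundary $\alpha \to 1$, where $\lambda^\ast \to \infty$ and $g$ degenerates into a monotone function; the bound then follows either via the convention $0^0 = 1$ in the formula for $g(\lambda^\ast)$ or by taking $\lambda \to \infty$ directly in the pre-optimization inequality, in which case the limiting constant is $C_Z K(1) \norm{N}_{C^1_0}$, well within the claimed range.
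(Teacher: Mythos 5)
Your argument is correct and follows essentially the same route as the paper: invoke Proposition \ref{Zbound} to reduce to a bound on $\sup_t\norm{N_{A-\lambda}(t)}_{\mathcal U}$, invoke Lemma \ref{ConvBound} (once for $p=2$, once for $p=r+1$), and then minimise over $\lambda>0$. The only cosmetic difference is the order in which you apply the crude estimate $1+C_Z(1+\lambda)\leq 2C_Z(1+\lambda)$: the paper bounds first and then minimises $\lambda^{-\alpha}+\lambda^{1-\alpha}$ at $\lambda^\ast=\alpha/(1-\alpha)$, whereas you minimise the exact prefactor $(a+b\lambda)\lambda^{-\alpha}$ at $\lambda^\ast=a\alpha/(b(1-\alpha))$ and bound at the end; both paths land on $2C_Z K(\alpha)/(\alpha^\alpha(1-\alpha)^{1-\alpha})$ with $\alpha=\eta-\gamma$, and your remark that this requires $C_Z\geq 1$ (harmless after enlarging $C_Z$) is a point the paper leaves implicit.

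One small thing you should not gloss over: the prefactor your optimisation actually produces is $2C_Z K(\alpha)\,\alpha^{-\alpha}(1-\alpha)^{-(1-\alpha)}$, while the corollary's displayed definition is $\tilde K(\alpha)=K(\alpha)\,\alpha^{\alpha}(1-\alpha)^{-(1-\alpha)}$; these differ by a factor $\alpha^{2\alpha}$, so your phrase that substitution ``yields exactly the claimed prefactor'' is not literally true as the statement is written. The paper's own proof has the same mismatch, which strongly suggests the sign of the exponent on $(\eta-\gamma)^{\eta-\gamma}$ in the definition of $\tilde K$ is a typo and should read $(\eta-\gamma)^{-(\eta-\gamma)}$; your computation (and the paper's) supports the corrected constant, but you should state the discrepancy rather than assert an exact match.
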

\begin{proof}
Given any $\lambda > 0$, we first apply Proposition \ref{Zbound} and then Lemma \ref{ConvBound} to arrive at the upper bound
$$\sup_{0 \leq t \leq T}\norm{Z_\varepsilon(t)}_{\mathcal U} \leq 2C_ZK(\eta-\gamma)\left(\frac{1}{\lambda^{\eta-\gamma}} + \lambda^{1-(\eta-\gamma)} \right)\norm{N}_{C^\eta_0([0,T];\mathcal U_{-\gamma})}.$$
Plugging in $\lambda = \frac{\eta - \gamma}{1-(\eta - \gamma)}$ minimises $$\lambda \mapsto \frac1{\lambda^{\eta-\gamma}} + \lambda^{1-(\eta-\gamma)}$$ and yields the claimed estimate.
\end{proof}

\begin{proof}[Proof of Theorem \ref{ShortLongBounds}] We first show the short time bound \eqref{ShortBound}. We aim to show existence of a constant $C_S$ such that \begin{equation} \label{lamb0}\sup_{0 \leq t \leq T} \norm{Z_\varepsilon(t)}_{\mathcal U} \leq C_S T^{\eta-\gamma}\norm{N}_{C^\eta_0([0,T];\mathcal U_{-\gamma})}.\end{equation} Assuming that \eqref{lamb0} holds, then we would find that $$C_S T^{\eta-\gamma}\norm{N}_{C^\eta_0([0,T];\mathcal U_{-\gamma})} \leq \frac{z_\ast}{\varepsilon}$$ implies $$\sup_{0 \leq t \leq T} \norm{Z_\varepsilon(t)}_{\mathcal U} \leq \frac{z_\ast}{\varepsilon}.$$ Under these conditions, propositions \ref{Ybound} and \ref{Zbound} then imply that $$\begin{aligned}
\sup_{t \in [0,T]}\norm{y_\varepsilon(t)}^2_{L^2} &\leq \frac{2C_y}{\kappa_\ast}\sum_{k=3}^{r+1} \varepsilon^k \sup_{0 \leq t \leq T} \norm{Z_\varepsilon(t)}_{\mathcal U}^{k} \leq \frac{2C_y}{\kappa_\ast}\sum_{k=3}^{r+1} \varepsilon^k (C_S T^{\eta-\gamma}\norm{N}_{C^\eta_0([0,T];\mathcal U_{-\gamma})})^{k}.
\end{aligned}$$
Thus, repeated application of the inequality $\sqrt{x+y} \leq \sqrt{x} +\sqrt{y}$ to $$\begin{aligned}
\sup_{0 \leq t \leq T} \norm{y_\varepsilon(t)}_{L^2} = \sqrt{\sup_{0 \leq t \leq T} \norm{y_\varepsilon(t)}^2_{L^2}} &\leq \sqrt{\frac{2C_y}{\kappa_\ast}\sum_{k=3}^{r+1} \varepsilon^k (C_S T^{\eta-\gamma}\norm{N}_{C^\eta_0([0,T];\mathcal U_{-\gamma})})^{k} } \\
&\leq \sqrt{\frac{2C_y}{\kappa_\ast}} \sum_{k=3}^{r+1} \varepsilon^{k/2} (C_S T^{(\eta-\gamma)}\norm{N}_{C^\eta_0([0,T];\mathcal U_{-\gamma})})^{k/2}
\end{aligned}$$
yields the desired estimate, as \begin{equation*}
\sup_{0 \leq t \leq T} \norm{\tilde U(t)}_{L^2} \leq \varepsilon \sup_{0 \leq t \leq T} 
\norm{Z_\varepsilon(t)}_{L^{2}} + \sup_{0 \leq t \leq T} \norm{y_\varepsilon(t)}_{L^2}.
\end{equation*} 
It remains to show that \eqref{lamb0} holds for some $C_S > 0$. We first apply \eqref{Zboundeq} in the case $\lambda = 0$ and find that 
$$\sup_{0 \leq t \leq T} \norm{Z_\varepsilon(t)}_{\mathcal U} \leq  (1+C_Z)\sup_{0 \leq t \leq T} \norm{N_{A}(t)}_{\mathcal U}.$$
It is left to find a bound on the norm of the process $N_A$ in $L^\infty([0,T];\mathcal U)$.
Since the operator $A$ is not assumed to be dissipative, we cannot exploit any exponential decay property of the semigroup generated by the operators $A_p$. Instead, we apply the the maximal inequality \begin{equation} \label{lamb0Tindel}
\sup_{0 \leq t \leq T} \norm{N_{A_p}(t)}_{\mathcal B^p_\delta} \leq C_2 T^{\eta - \gamma - \delta} \norm{N}_{C^\eta_0([0,T];\mathcal B^p_{-\gamma})}
\end{equation} given by Proposition \ref{YoungConv} for some $C_2 > 0$ independent of $T>0$. For $\delta = 0$, i.e. $\mathcal B^p_\delta = L^p(\mathcal O)$, this gives $$\sup_{0 \leq t \leq T} \norm{N_{A}(t)}_{\mathcal U} \leq 2C_2T^{\eta - \gamma}\norm{N}_{C^\eta_0([0,T];\mathcal U_{-\gamma})}.$$ Therefore, $$\begin{aligned}
\sup_{0 \leq t \leq T} \norm{Z_\varepsilon(t)}_{\mathcal U} \leq  \underbrace{2(1+C_Z)C_2}_{\eqqcolon C_S} T^{\eta - \gamma} \norm{N}_{C^\eta_0([0,T];\mathcal U_{-\gamma})}.
\end{aligned}$$
Using the same approach, but with the upper bound  on $\sup_{0 \leq t \leq T}\norm{Z_\varepsilon(t)}_{\mathcal U}$ given by \eqref{lambgr0bound} instead of \ref{lamb0Tindel}, we obtain the long time bound \eqref{LongBound}.
\end{proof}
\section{Application to self-similar stochastic processes with Hölder continuous paths}
\label{SectionfBM}  
This section is devoted to the study of the effects of $H$-self-similar noise $X^H$ with Hölder continuous paths on the growth of the distance between the orbit $\Gamma$ and the travelling wave solution $V^H$ driven by $X^H$. Let us first introduce the notion of self-similarity.
\begin{definition}[Self-similarity] 
A stochastic process $(X^H_t)_{t \geq 0}$ on some probability space $(\Omega, \mathcal F,\mathbb P)$ with values in a topological vector space $(V,\tau)$ equipped with the induced Borel $\sigma$-algebra is $H$-\textit{self-similar} for $H>0$ if,  for all $a > 0$, $$(X^H_{at})_{t \geq 0}\overset{d}{=}(a^HX^H_t)_{t \geq 0}$$ as laws on the measure space $V^{[0,\infty)}$ equipped with the product sigma-algebra induced by $V$.
\end{definition}
Provided that $V$ is a separable Banach space, Hölder seminorms are equivalent to sequential norms involving dyadic second differences \cite{RackauskasBanach} and then self-similarity of the process $X^H$ implies that for any $T, b \geq 0$, \begin{equation} \label{scaling}
\mathbb P\left(\norm{X^H}_{C^\eta_0([0,T];V)} \leq b \right) = \mathbb P\left(T^{H-\eta}\norm{X^H}_{C^\eta_0([0,1];V)} \leq b \right).
\end{equation}
Therefore, we can control the probability and magnitude of the errors estimates derived in Section \ref{pathstab} by analysing the tail behaviour of the Hölder norm of $X^H$ on the interval $[0,1]$. We now gather a few immediate consequence of self-similarity. Since $$\norm{X(0)}_V \overset{d}{=} \norm{X(a\cdot0)}_V = a^H \norm{X(0)}_V,$$ it immediately follows that $X(0) \equiv 0$. Furthermore, the process $X$ cannot be stationary for $H \neq 0$ as $X(t) \overset{d}{=} a^HX(t)$ implies $a^H = 1$. By similar reasoning, it cannot be that $X_t \overset{d}{=}X_s$ for $X_s,X_t \neq 0$ and $s \neq t$. In principle, $H>0$ can be arbitrarily large, exemplified for example by $$X^{H+n}(t) = \int_0^t \dots \int_0^{t_{n-1}}B^H(t_n)\d t_n \dots \d t_1,$$ which is $(H+n)$-self-similar given a fractional Brownian motion $B^H$. However, the self-similarity of Hölder norms that lies behind the scaling estimate \eqref{scaling} demonstrates that $\eta > H$ is not possible for nondegenerate processes; else we reach a contradiction since $$\mathbb P\left(\norm{X^H}_{C^\eta_0([0,t];V)} >  b\right) \overset{t > s}{\geq} \mathbb P\left(\norm{X^H}_{C^\eta_0([0,s];V)} >  b\right) = \mathbb P\left(\norm{X^H}_{C^\eta_0([0,1];V)} >  s^{\eta-H}b\right) \overset{s \to 0}{\to} 1$$ for all $b,t > 0$. The particular ranges of $\eta$ and $H$ depend both on the marginal distributions of the processes in question and the structure of their increments. We will consider such questions and the derivation of tail estimates of Hölder norms in Appendix \ref{TailEstimateSection}.

\subsection{Stability of travelling waves for self-similar noise with Hölder continuous paths}
Let $(X^H(t))_{t \in [0,T]}$ be a Hölder continuous stochastic process with values in $\mathcal U \coloneqq L^2 \cap L^{r+1}$ on some probability space $(\Omega, \mathcal F, \mathbb P)$. In the following subsection we will see that we can make $H$ large compared to the regularity in time of the driver, and in the context of this subsection, $H>0$ can be arbitarily large.

We consider processes $V^H$ on $[0,T]$ such that $U = V^H-v^{TW}$ solves the equation \begin{equation}\label{fBmNag} \begin{cases} 
\d U(t) = \left(AU(t) + f\left(U(t)+v^{TW}(t)\right)-f\left(v^{TW}(t)\right)\right)\!\d t + \varepsilon \d X^H(t) \\  
U(0) = V(0) -v^{TW}_0 & 
\end{cases}\end{equation} according to Definition \ref{SolDef}. We now state and demonstrate the main theorems of this manuscript. 
\begin{theorem} \label{mainthm}
Suppose that Assumptions \ref{IntPl}, \ref{TW}, \ref{Intgrbl} and \ref{SpcGp} hold and let $A$, $f$ satisfy the assumptions of Theorem \ref{exuniq}, where in particular, $f = f_0 + f_1$ for some odd-order polynomial $f_0$ with $\mathrm{deg}(f_0) = r$ and  $f_1 \in C^2(\mathbb R)$ with bounded first and second derivative.

For $\varepsilon > 0$, let $V^H$ denote the pathwise defined solution of equation \eqref{fBmNag} given by Theorem \ref{exuniq}, with $V^H(0) = v^{TW}_0$.
Then there exist constants $\rho, z_\ast > 0$ independent of $H$, $T$, $\varepsilon$ and $\eta$, and a constant $C_S$ dependent on $\eta$ such that $V^H$ satisfies
\begin{equation} \label{ShortTimeBoundH}
\mathbb P\left(\sup_{0 \leq t \leq T} \mathrm d(V^H(t), \Gamma) \leq C_S  T^H + \rho \sum_{k=3}^{r+1} C_S^{k/2} T^{kH/2} \right) \geq \mathbb P\left(\norm{X^H}_{C^\eta_0([0,1];\mathcal U)} \leq \frac{1}{\varepsilon} \right), ~
\text{ for } T^H \leq \frac{z_\ast}{C_S}.
\end{equation}
\end{theorem}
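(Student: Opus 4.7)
The plan is to combine the pathwise short-time bound of Theorem \ref{ShortLongBounds} (applied with $\gamma = 0$ and driver $N = X^H$) with the self-similarity scaling relation \eqref{scaling}. The central observation is that the single quantity $\varepsilon T^\eta \norm{X^H}_{C^\eta_0([0,T];\mathcal U)}$ governs both the hypothesis and the conclusion of that pathwise bound, and by self-similarity it has the same law as $\varepsilon T^H \norm{X^H}_{C^\eta_0([0,1];\mathcal U)}$. This is precisely what allows us to trade the Hölder exponent $\eta$ for the Hurst exponent $H$ in the exponents of $T$ in the final estimate.

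Concretely, I would define the random variable $M \coloneqq \varepsilon T^\eta \norm{X^H}_{C^\eta_0([0,T];\mathcal U)}$ and the event $E \coloneqq \{M \leq T^H\}$. Applying \eqref{scaling} with $b = T^{H-\eta}/\varepsilon$ gives
\begin{equation*}
\mathbb P(E) = \mathbb P\left(\norm{X^H}_{C^\eta_0([0,T];\mathcal U)} \leq \tfrac{T^{H-\eta}}{\varepsilon}\right) = \mathbb P\left(\norm{X^H}_{C^\eta_0([0,1];\mathcal U)} \leq \tfrac{1}{\varepsilon}\right),
\end{equation*}
which is exactly the right-hand side of \eqref{ShortTimeBoundH}.

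Next, under the standing hypothesis $T^H \leq z_\ast / C_S$, on the event $E$ one has $C_S M \leq C_S T^H \leq z_\ast$, which is precisely the condition $C_S T^\eta \norm{X^H}_{C^\eta_0([0,T];\mathcal U)} \leq z_\ast / \varepsilon$ that unlocks the pathwise short-time bound \eqref{ShortBound} of Theorem \ref{ShortLongBounds} (note $C_S T^\eta \wedge C_L \leq C_S T^\eta$). Inserting that bound and then using the monotone estimate $M^{k/2} \leq T^{kH/2}$ termwise on $E$ yields
\begin{equation*}
\sup_{0 \leq t \leq T} d(V^H(t), \Gamma) \leq C_S M + \rho \sum_{k=3}^{r+1} C_S^{k/2} M^{k/2} \leq C_S T^H + \rho \sum_{k=3}^{r+1} C_S^{k/2} T^{kH/2}.
\end{equation*}
Thus $E$ is contained in the event whose probability appears on the left of \eqref{ShortTimeBoundH}, and monotonicity of $\mathbb P$ delivers the claim.

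There is no genuine conceptual obstacle once the scaling identity has been exploited; the subtlety lies in bookkeeping. The only point requiring some care is that the pathwise bound of Theorem \ref{ShortLongBounds} must be invoked realisation by realisation on $E$, which is legitimate because that theorem is formulated for arbitrary fixed Hölder-continuous inputs, and measurability of the event $E$ follows from separability of $\mathcal U$ (so that $\norm{X^H}_{C^\eta_0}$ is a random variable, as is required for \eqref{scaling} to even make sense). The constants $\rho = \sqrt{2C_y/\kappa_\ast}$ and $z_\ast$ are exactly those inherited from Proposition \ref{Ybound} and are thus independent of $\eta, T, \varepsilon, H$, while the dependence $C_S = C_S(\eta)$ is exactly the one carried over from Theorem \ref{ShortLongBounds}.
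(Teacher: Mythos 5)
Your proposal is correct and follows essentially the same route as the paper: both invoke the pathwise short-time bound of Theorem \ref{ShortLongBounds} on the event that $\varepsilon T^\eta\norm{X^H}_{C^\eta_0([0,T];\mathcal U)}\leq T^H$, then use the self-similarity scaling identity to rewrite that event's probability as $\mathbb P\bigl(\norm{X^H}_{C^\eta_0([0,1];\mathcal U)}\leq 1/\varepsilon\bigr)$ under the standing condition $T^H\leq z_\ast/C_S$. Your organization, defining $E$ directly under the constraint $T^H \leq z_\ast/C_S$ rather than first passing through the intermediate threshold $z_\ast \wedge C_S T^H$ as the paper does, is a cosmetic streamlining, not a different argument.
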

One striking feature of this short-time estimate is that higher Hurst indices $H$ yield better error estimates, independent of the maximal Hölder exponent $$\eta_X \coloneqq \sup \{ \eta < H: X^H \in C^\eta_0([0,T];\mathcal U)\}.$$ In particular, for fixed $\varepsilon$, the probability that this estimate holds is bounded below uniformly as $T \to 0$. This result is in particular valid for processes with high self-similarity and low Hölder exponent.
\begin{proof}[Proof of Theorem \ref{mainthm}] Let $X^H$ denote a fixed realisation a path of driving process, and let $V^H$ be the corresponding solution of equation \eqref{fBmNag} with initial condition $V^H(0) = v^{TW}_0$. By Proposition \ref{ShortLongBounds}, we know that there exist a threshold $z_\ast>0$ and a constant $C_S > 0$ independent of $\eta, \varepsilon > 0$ such that whenever
\begin{equation} \label{ShortCondition}
C_S T^{\eta} \norm{X^H}_{C^\eta_0([0,T];\mathcal U)} \leq \frac{z_\ast}{\varepsilon}
\end{equation}
holds, then 
$$
\begin{aligned}
\sup_{0 \leq t \leq T} d(V^H(t), \Gamma) &\leq  C_S T^{\eta}\varepsilon \norm{X^H}_{C^\eta_0([0,T];\mathcal U)} + \rho \sum_{k=3}^{r+1} (C_S T^{\eta}\varepsilon)^{k/2} \norm{X^H}_{C^\eta_0([0,T];\mathcal U)}^{k/2}.
\end{aligned}$$
for $\rho = \sqrt{\frac{2C_y}{\kappa_\ast}}$ with constants $\kappa_\ast, C_y$ as introduced in Assumption \ref{SpcGp} and  Proposition \ref{diffineq}. Close inspection of the involved terms shows that if $$C_S T^{\eta} \varepsilon \norm{X^H}_{C^\eta_0([0,T];\mathcal U)} \leq z_\ast \wedge C_S T^H,$$ then both \eqref{ShortCondition} and $$\sup_{0 \leq t \leq T} \mathrm d(V^H(t), \Gamma) \leq C_S  T^H + \rho \sum_{k=3}^{r+1} C_S^{\frac{k}{2}} T^{\frac{kH}{2}}$$ follow. Therefore, 
$$
\begin{aligned}
\mathbb P\left(\sup_{0 \leq t \leq T} \mathrm d(V^H(t), \Gamma) \leq C_S  T^H + \rho \sum_{k=3}^{r+1} C_S^{k/2} T^{kH/2} \right) \geq \mathbb P\left(\varepsilon C_S T^{\eta} \norm{X^H}_{C^\eta_0([0,T];\mathcal U)} \leq z_\ast \wedge C_S T^H\right).
\end{aligned}$$
Now, crucially, Hölder norms of self-similar processes satisfy the scaling identity $$T^{\eta} \norm{X^H}_{C^\eta_0([0,T];\mathcal U)} \overset{d}{=} T^H \norm{X^H}_{C^\eta_0([0,1];\mathcal U)}.$$ We can conclude that \begin{equation*}
\mathbb P\left(\varepsilon C_S T^{\eta} \norm{X^H}_{C^\eta_0([0,T];\mathcal U)} \leq z_\ast \wedge C_S T^H\right) = \mathbb P\left(\norm{X^H}_{C^\eta_0([0,1];\mathcal U)} \leq \frac{1}{\varepsilon} \left(\frac{z_\ast}{C_S T^H} \wedge 1 \right)\right).\qedhere 
\end{equation*} \end{proof}
If one applies \eqref{LongBound} instead of \eqref{ShortBound}, one obtains a long-time bound in probability. This estimate is relevant for large time-scales only, where $T^\eta \gg 1$ implies that \eqref{lambgr0bound} provides a significantly lower upper bound of $\norm{Z_\varepsilon}_{L^\infty([0,T];\mathcal U)}$. 
\begin{theorem}\label{LongboundSelfSim}
Under the same conditions as Theorem \ref{mainthm}, there exist constants $\rho, z_\ast, C_L > 0$ with 
\begin{equation}\label{LongTimeBoundH}
\begin{aligned}
& \mathbb P\left(\sup_{0 \leq t \leq T} \mathrm d(V^H(t), \Gamma) \leq C_L\delta  + \rho \sum_{k=3}^{r+1} C_L^{k/2}\delta^{k/2} \right) \geq \mathbb P\left(\norm{X^H}_{C^\eta_0([0,1];\mathcal U)} \leq \frac{\delta}{\varepsilon T^{H-\eta}}\right), ~\text{ for } \delta \leq \frac{z_\ast}{C_L}.
\end{aligned}
\end{equation}
\end{theorem}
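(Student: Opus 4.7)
My plan is to mirror the proof of Theorem \ref{mainthm} almost verbatim, but invoking the long-time bound \eqref{LongBound} from Proposition \ref{ShortLongBounds} in place of the short-time bound \eqref{ShortBound}. The self-similarity scaling identity then gives a different power of $T$ in the final probability estimate, namely $T^{H-\eta}$ rather than $T^H$, which is exactly what is claimed.

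The first step is to fix a path of the driver $X^H$ (noting that in this setting the noise takes values directly in $\mathcal U$, so we may take $\gamma = 0$ in Proposition \ref{ShortLongBounds}). Proposition \ref{ShortLongBounds} then ensures that whenever the prerequisite
\begin{equation*}
(C_S T^{\eta} \wedge C_L) \norm{X^H}_{C^\eta_0([0,T];\mathcal U)} \leq \frac{z_\ast}{\varepsilon}
\end{equation*}
holds, the long-time estimate
\begin{equation*}
\sup_{0 \leq t \leq T} \mathrm d(V^H(t),\Gamma) \leq C_L \varepsilon \norm{X^H}_{C^\eta_0([0,T];\mathcal U)} + \rho \sum_{k=3}^{r+1} (C_L \varepsilon)^{k/2} \norm{X^H}^{k/2}_{C^\eta_0([0,T];\mathcal U)}
\end{equation*}
is valid with $\rho = \sqrt{2 C_y/\kappa_\ast}$.

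The second step is to observe that imposing $C_L \varepsilon \norm{X^H}_{C^\eta_0([0,T];\mathcal U)} \leq C_L \delta$ for some $\delta \leq z_\ast/C_L$ simultaneously implies the prerequisite (since the minimum is bounded by $C_L$) and yields the deterministic pathwise bound
\begin{equation*}
\sup_{0 \leq t \leq T} \mathrm d(V^H(t),\Gamma) \leq C_L \delta + \rho \sum_{k=3}^{r+1} C_L^{k/2} \delta^{k/2}.
\end{equation*}
Hence the probability of the event on the left-hand side of \eqref{LongTimeBoundH} is at least $\mathbb P\bigl(\varepsilon \norm{X^H}_{C^\eta_0([0,T];\mathcal U)} \leq \delta\bigr)$.

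The third and final step is to apply the scaling identity \eqref{scaling}, which gives $\norm{X^H}_{C^\eta_0([0,T];\mathcal U)} \overset{d}{=} T^{H-\eta}\norm{X^H}_{C^\eta_0([0,1];\mathcal U)}$, so that the condition $\varepsilon \norm{X^H}_{C^\eta_0([0,T];\mathcal U)} \leq \delta$ is equivalent in law to $\norm{X^H}_{C^\eta_0([0,1];\mathcal U)} \leq \delta/(\varepsilon T^{H-\eta})$. This yields the claimed lower bound. Since the argument is a direct adaptation of the proof of Theorem \ref{mainthm}, I do not anticipate any substantive obstacle; the only point requiring minor care is ensuring that choosing $\delta \leq z_\ast/C_L$ really does certify the prerequisite of the long-time bound uniformly in $T$, which is why the $C_L$ appears (rather than the $T$-dependent $C_S T^\eta$) in the threshold.
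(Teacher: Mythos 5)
Your proof is correct and matches the paper's intent exactly: the paper does not spell out a separate proof of Theorem \ref{LongboundSelfSim} but instead remarks that it follows from applying the long-time bound \eqref{LongBound} in place of \eqref{ShortBound} in the argument of Theorem \ref{mainthm}, which is precisely what you do. Your care in checking that $\delta \le z_\ast/C_L$ certifies the $T$-independent branch of the prerequisite $(C_S T^{\eta}\wedge C_L)\norm{X^H}_{C^\eta_0}\le z_\ast/\varepsilon$, and that the scaling identity \eqref{scaling} then converts $\varepsilon\norm{X^H}_{C^\eta_0([0,T];\mathcal U)}\le\delta$ into the stated condition on $\norm{X^H}_{C^\eta_0([0,1];\mathcal U)}$, closes the gaps the paper leaves implicit.
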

We now apply this theorem to self-similar processes with tail behaviour corresponding to the examples in the appendix (cf. Appendix \ref{TailEstimateSection}). We specifically focus on the cases where either \begin{equation} \label{PolDecayIII}
\mathbb P\left(\norm{X^H}_{C^\eta_0([0,1];\mathcal U)} > b\right) \in \mathcal O\left(b^{-\alpha}\right), ~ b \to \infty \tag{I}
\end{equation} or 
\begin{equation}\label{ExpDecayIII}
\mathbb P\left(\norm{X^H}_{C^\eta_0([0,1];\mathcal U)} > b\right) \in \mathcal O(\exp(-b^\alpha/k)), ~ b \to \infty \tag{II}
\end{equation}
for some $\alpha, k > 0$. For self-similar processes with stationary increments, this tail behaviour corresponds to the tail of the marginal $X^H(1)$ (see Lemma \ref{StationaryTail} in the appendix).
\begin{corollary}\label{AppliedSelfSim}
Assume \eqref{PolDecayIII} or \eqref{ExpDecayIII} holds. Then, as $\varepsilon \to 0$, $V^H$ satisfies the short term bound
\begin{equation} \label{ShortTimeBoundAppl}
\begin{aligned}
&\mathbb P\left(\sup_{0 \leq t \leq T} \mathrm d(V^H(t), \Gamma) \leq C_S  T^H + \rho \sum_{k=3}^{r+1} C_S^{k/2} T^{kH/2} \right) \geq \begin{cases}
1- \mathcal O\left(\varepsilon^{\alpha}\right) & \eqref{PolDecayIII} \\
1- \mathcal O\left(\exp(-\varepsilon^{-\alpha}/k\right) & \eqref{ExpDecayIII}
\end{cases}
\end{aligned}
\end{equation}
for small $T$. Further, as $T\to \infty$, the scaling $\varepsilon=T^{-(H-\eta)}$ yields the long time bound
\begin{equation} \label{LongTimeBoundAppl}
\begin{aligned}
& \mathbb P\left(\sup_{0 \leq t \leq T} \mathrm d(V^H(t), \Gamma) \leq C_L\delta + \rho \sum_{k=3}^{r+1} C_L^{k/2}\delta^{k/2}\right) \geq \begin{cases}
1- \mathcal O\left(\delta^{-\alpha} T^{-\alpha\beta}\right) & \eqref{PolDecayIII} \\
1- \mathcal O\left(\exp(-\delta^{\alpha} T^{\alpha\beta})\right) & \eqref{ExpDecayIII}
\end{cases}
\end{aligned}
\end{equation}
for any $\eta + \beta < \eta_X$ and small $\delta$.
\end{corollary}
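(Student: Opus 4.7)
The plan is to specialise the probability estimates \eqref{ShortTimeBoundH} and \eqref{LongTimeBoundH} of Theorems \ref{mainthm} and \ref{LongboundSelfSim} to the two explicit tail hypotheses \eqref{PolDecayIII} and \eqref{ExpDecayIII}. In both cases the task reduces to passing to the complement of the probabilistic lower bounds on the right-hand sides and substituting the assumed tails of $\|X^H\|_{C^\eta_0([0,1];\mathcal{U})}$; the self-similar scaling of the Hölder norm has already been absorbed into the conclusions of the two parent theorems via the identity \eqref{scaling}.

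For the short-time bound \eqref{ShortTimeBoundAppl} I would apply Theorem \ref{mainthm} directly. Its right-hand side already reads $\mathbb{P}(\|X^H\|_{C^\eta_0([0,1];\mathcal{U})} \leq 1/\varepsilon)$, so its complement is $1 - \mathbb{P}(\|X^H\|_{C^\eta_0([0,1];\mathcal{U})} > 1/\varepsilon)$. Substituting \eqref{PolDecayIII} with $b = 1/\varepsilon$ then yields a remainder of order $\varepsilon^\alpha$, while substituting \eqref{ExpDecayIII} at the same $b$ yields $\exp(-\varepsilon^{-\alpha}/k)$. The standing constraint $T^H \leq z_\ast/C_S$ of Theorem \ref{mainthm} delineates the ``small $T$'' regime appearing in the conclusion.

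For the long-time bound \eqref{LongTimeBoundAppl} the key subtlety is to extract a $T$-dependent factor from the threshold $\delta/(\varepsilon T^{H-\eta})$ appearing in \eqref{LongTimeBoundH}: if one applies Theorem \ref{LongboundSelfSim} at the same $\eta$ used in the scaling $\varepsilon = T^{-(H-\eta)}$, the threshold collapses to $\delta$ and no $T$-gain is available. The trick I would use is to invoke Theorem \ref{LongboundSelfSim} at the \emph{enlarged} Hölder exponent $\eta' := \eta + \beta$: the hypothesis $\eta + \beta < \eta_X$ guarantees that $X^H \in C^{\eta+\beta}_0([0,1];\mathcal{U})$ almost surely, and the tail estimates \eqref{PolDecayIII} / \eqref{ExpDecayIII}, which for the examples in Appendix \ref{TailEstimateSection} hold at every sub-critical Hölder exponent, remain valid at $\eta'$. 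Under the prescribed scaling one then has $\varepsilon\, T^{H-\eta'} = T^{-\beta}$, so that
\begin{equation*}
\frac{\delta}{\varepsilon\, T^{H-\eta'}} \;=\; \delta\, T^{\beta},
\end{equation*}
and inserting the tail bounds at $b = \delta T^\beta$ yields a remainder of order $(\delta T^\beta)^{-\alpha} = \delta^{-\alpha} T^{-\alpha\beta}$ under \eqref{PolDecayIII} and of order $\exp(-\delta^\alpha T^{\alpha\beta}/k)$ under \eqref{ExpDecayIII}, the constant $k$ being absorbed into the implicit $\mathcal{O}$-constant.

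The principal obstacle is not analytic but bookkeeping: one must confirm that the combined constraints $\eta + \beta < \eta_X$, the regime hypotheses $T^H \leq z_\ast/C_S$ and $\delta \leq z_\ast/C_L$, and the validity of the tail estimates at the shifted exponent $\eta + \beta$ rather than $\eta$ are mutually compatible in each of the two asymptotic regimes ($\varepsilon \to 0$ with $T$ small, and $T \to \infty$ with $\delta$ small). Modulo these checks, both halves of the corollary follow immediately from the substitutions sketched above.
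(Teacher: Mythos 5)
The paper does not give an explicit proof of this corollary, so your reconstruction is the right yardstick. Your argument is correct, and the one genuine insight it requires — that the long-time bound \eqref{LongTimeBoundH} must be invoked at the \emph{shifted} Hölder exponent $\eta' = \eta + \beta$, not the $\eta$ entering the scaling $\varepsilon = T^{-(H-\eta)}$, so that $\delta/(\varepsilon T^{H-\eta'}) = \delta T^{\beta}$ — is precisely the step that makes \eqref{LongTimeBoundAppl} nontrivial; a naive substitution at the same $\eta$ produces the uninformative threshold $\delta$. The short-time part is a direct complement-and-substitute application of Theorem \ref{mainthm}. One small caveat worth stating explicitly rather than leaving implicit: the tail hypotheses \eqref{PolDecayIII}/\eqref{ExpDecayIII} as written fix a single $\eta$, so applying them at $\eta'$ requires that the same decay rate persist over the whole range $(\,\cdot\,,\eta_X)$; this is indeed what Lemma \ref{StationaryTail} delivers for stationary-increment self-similar processes (the rate $\alpha$ is $\eta$-independent, only the prefactor $K_\eta$ changes, and the latter is absorbed into the $\mathcal{O}$-constant). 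Likewise $C_L$ in \eqref{LongTimeBoundAppl} should be read as $C_L(\eta')$, since $C_L$ depends on the Hölder exponent at which Theorem \ref{ShortLongBounds} is applied. Modulo making these two bookkeeping points explicit, your proof matches the intended argument.
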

\begin{remark}
In the case of fBm, $\eta_X = H$, while fractional $\alpha$-stable processes (cf. Section \ref{StationaryTailSection}) with $1 < \alpha < 2$ satisfy the relation $\eta_X = H-1/\alpha$.
\end{remark}
\begin{remark}
Observe that we can put $\delta = T^{-\gamma}$ for any $\gamma < \beta$ and still see that the right-hand side of \eqref{LongTimeBoundAppl} tends to $1$ as $T \to \infty$. Therefore, with this scaling, the error vanishes in probability.
\end{remark}

It would be desirable to know inhowfar these estimates are optimal. Unfortunately, even in one dimension, maximal inequalities for non-Markovian processes are difficult to obtain. However, some heuristic arguments suggest that our results are rather accurate polynomial bounds for $\alpha$-stable processes. We compare $Z_\varepsilon$ (cf. Corollary \ref{Zbound})\ to some one-dimensional Ornstein-Uhlenbeck process $$Z(t) \coloneqq\int_0^t e^{-(t-s)} \d Y^H(s)$$ with $Y^H$ self-similar and $\alpha$-stable for $1 < \alpha \leq 2$. For small times $t>s>0$, $e^{-(t-s)}\approx 1$ so that $Z(t) \approx Y^H(t)$. Therefore, the running maximum of $Z(t)$ approximately behaves as $T^H$ on short times, which is in line with our results. This calculation is up to a random constant; in our results, the necessary scaling of the noise amplitude to counterbalance this constant is determined by the tails of marginals of $Y^H$. 

Now, we want to justify that the scaling $\varepsilon \sim T^{-(H-\eta)}$ is natural and leads to decay as time progresses. First consider the case where $Y^H(t)$ is a fractional Brownian motion. Then $H-\eta_Y = 0$ and since $\sup_{t \geq 0}\mathbb E[Z^2_t] <\infty$, Dudley's metric entropy bound (\cite{TalagrandProbabilityBanach}, Ch. 12) implies that $$\E\left[\sup_{0\leq t\leq T} |Z(t)|\right] \lesssim (\log 1+T)^{1/2}$$ grows subpolynomially. Hence, $ \varepsilon \E\left[\sup_{0\leq t\leq T} |Z(t)|\right] \lesssim T^{-\beta} (\log 1+T)^{1/2} \overset{T\to\infty}{\to} 0$ for any $\beta = H-\eta >0$.  On the other hand, if $\alpha \in (1,2)$, then the relation $H-\eta_Y = \frac{1}{\alpha}$ holds. If we make the strong assumption that the damping factor decorrelates subsequent blocks of the Ornstein-Uhlenbeck process, we can treat $\sup_{0\leq t\leq T} |Z(t)|$ as the maximum of, say, $\lfloor T\rfloor$ independent random variables. Asymptotically, the expected maximum grows as $T^{\frac1\alpha}$ for $\alpha \in (1,2)$ (compare to corresponding continuous-time processes). In the first case, this gives us the same type of decay as above. This implies that $$\varepsilon \mathbb E\left[\sup_{0\leq t\leq T} |Z(t)|\right] \lesssim T^{-\frac{1}{\alpha}-\beta} T^{\frac1\alpha} \overset{T\to\infty}{\to} 0$$ for any $\beta >0$.  Hence we observe significantly larger errors. We see that for fractional $\alpha$-stable processes, this scaling naturally leads to decay of the error over large timespans and demonstrates a positive relation between heavier tails of marginals and increased error over long timespans. We interpret these scaling relationships as an indication that the exponential timescales derived in \cite{HamsterCHS2020SoTW} also apply to the case of fractional Brownian noise, while stable noise seems to yield at most quadratic timescales.
\appendix
\section*{Appendix}
\section{Identities for convolutions against Hölder paths}
\begin{proof}[Proof of Proposition \ref{ibp}]
As in \citet{Gubinelli2004YoungIA}, we consider $$\mathfrak S^n(t) = \sum_{k=0}^{2^n-1} S(t-t^n_k)(N(t^n_{k+1})-N(t^n_k)),$$ where $t^n_k = \frac{2^n t}{k}$. In \cite{Gubinelli2004YoungIA}, it was shown that $\mathfrak S^n(t)$ converges in $\mathcal B_\delta$ for each $t \in [0,T]$, and that the resulting process is Hölder continuous. To prove the desired identity, it suffices to show that $\mathfrak S^n$ converges pointwise to the expression on the right hand side of  \eqref{ibpformula}. By rearranging the Riemann sum, we find that 
$$\begin{aligned}
\mathfrak S^n(t) &= S(1/2^n)N(t) - \sum_{k=0}^{2^n-2} (S(t-t^n_{k+1})-S(t-t^n_k))N(t^n_{k+1})\\
&= S(t)N(t) - \sum_{k=0}^{2^n-2} (S(t-t^n_{k+1})-S(t-t^n_k))(N(t^n_{k+1})-N(t)).
\end{aligned}$$
Since, for $s < s' < t$ and $x \in \mathcal B$, $$S(t-s')x - S(t-s)x = - \int_s^{s'} A S(t-r) x \d r,$$ it follows that
$$
\begin{aligned}
\mathfrak S^n(t) = S(t)N(t) + \int_0^t AS(t-r) (\Delta_n N)(r) \d r,
\end{aligned}$$
where $$(\Delta_n N)(r) = \sum_{k=0}^{2^n-2} \mathds{1}_{[t^n_k, t^n_{k+1})}(r)(N(t^n_{k+1})-N(t)).$$
To show convergence in $\mathcal B_\delta$, it remains to prove that $$\int_0^t (-A)^\delta A S(t-r)(\Delta_n N)(r) \d r \to \int_0^t (-A)^\delta A S(t-r) (N(r)-N(t)) \d r.$$ By Hölder continuity of $N$ and closedness of $A$, we see that for $r < t$, $$ A S(t-r)(\Delta_n N)(r) \to A S(t-r) (N(r)-N(t)).$$ To prove the claim using the dominated convergence theorem for Bochner integrals, it remains to show existence of an integrable majorant. To this end, first note that $(\Delta_n N)(r) \equiv 0$ on $[t-1/2^n,t]$. Henceforth, fix $r < t-1/2^n$. It then holds that 
$$\begin{aligned}
\norm{(-A)^\delta AS(t-r)(\Delta_n N)(r)}_{\mathcal B_\delta} &\leq M \frac{1}{(t-r)^{1+\delta + \gamma}} \norm{N(\lceil 2^n r \rceil /2^n) - N(t)}_{\mathcal B_{-\gamma}}\\
& \leq M \frac{(t-\lceil 2^n r \rceil /2^n)^\eta}{(t-r)^{1+\delta+\gamma}} \norm{N}_{C^\eta_0([0,T];\mathcal B_{-\gamma})}.
\end{aligned}$$
Now, since $r \leq \lceil 2^n r \rceil /2^n$, we find that $t - \lceil 2^n r \rceil /2^n \leq t - r$ and thus $$\frac{(t-\lceil 2^n r \rceil /2^n)^\eta}{(t-r)^{1+\delta+\gamma}} \leq \frac{1}{(t-r)^{1-(\eta - \delta - \gamma)}}.$$
As $1-(\eta - \delta - \gamma) < 1$, $$\norm{AS(t-\cdot)(\Delta N)^n(\cdot)}_{\mathcal B_\delta} \leq M\frac{\norm{N}_{C^\eta_0([0,T];\mathcal B_{-\gamma})}}{(t-\cdot)^{1-(\eta-\delta-\gamma)}} \in L^1([0,t])$$ uniformly in $n$, which finishes the proof.
\end{proof}
\begin{proof}[Proof of Proposition \ref{DampDuham}]
The proof again relies on the integration by parts formula $$N_{A-\lambda}(t) = \int_0^t (A-\lambda)e^{-\lambda(t-s)}S(t-s)\left(N(s)-N(t)\right)\d s + e^{-\lambda t}S(t)N(t).$$ Analyticity of the semigroup $e^{-\lambda t}S(t)$ now implies 
$$
\begin{aligned}
\norm{N_{A-\lambda}(t))}_{\mathcal B^p_{-\gamma}} &\leq \int_0^t \norm{(A-\lambda)e^{-\lambda(t-s)}S(t-s)\left(N(s)-N(t)\right)}_{L^p(\mathcal O)} \d s \\& \quad + \norm{e^{-\lambda t} S(t)N(t)}_{L^p(\mathcal O)}
\\ &\leq \int_0^t  e^{-\lambda(t-s)}\left((t-s)^{-(1+\gamma)} + \lambda (t-s)^{-\gamma}\right) \norm{N(s)-N(t)}_{\mathcal B^p_{-\gamma}} \d s \\ & \quad + t^{-\gamma}e^{-\lambda t} \norm{N(t)}_{\mathcal B^p_{-\gamma}}
\end{aligned}
$$
Note that as $N \in C^\eta_0([0,T];\mathcal B^p_{-\gamma})$, $$\norm{N(s)-N(t)}_{\mathcal B^p_{-\gamma}} \leq \norm{N}_{C^\eta_0([s,t];\mathcal B^p_{-\gamma})} (t-s)^\eta$$ for $0 < \eta < H$ and $s,t \in [0,T]$. Consequently,
$$
\begin{aligned}
\norm{N_{A-\lambda}(t)}_{\mathcal B^p_{-\gamma}} &\leq \int_0^t  e^{-\lambda(t-s)}\left((t-s)^{-1+ (\eta - \gamma)}  + \lambda(t-s)^{\eta-\gamma} \right)  \norm{N}_{C^\eta_0([s,t];\mathcal B^p_{-\gamma})} \d s \\ & \quad + e^{-\lambda t} t^{\eta - \gamma} \norm{N}_{C^\eta_0([0,t];\mathcal B^p_{-\gamma})}
\\ &\leq \Bigg(  \int_0^t e^{-\lambda(t-s)}(t-s)^{-1+ (\eta - \gamma)} \d s + \int_0^t \lambda e^{-\lambda(t-s)}(t-s)^{\eta-\gamma} \d s + e^{-\lambda t} t^{\eta-\gamma} \Bigg) \\ &  \quad  \times \norm{N}_{C^\eta_0([0,t];\mathcal B^p_{-\gamma})}
\\ &\leq \lambda^{-(\eta-\gamma)} \left(\Gamma(\eta-\gamma) + \Gamma(1+\eta-\gamma) + (\eta-\gamma)^{-(\eta-\gamma)}\right)\norm{N}_{C^\eta_0([0,t];\mathcal B^p_{-\gamma})},
\end{aligned}
$$
and the claim follows.
\end{proof}

\section{Tail estimates on Hölder norms of self-similar processes} \label{TailEstimateSection}
The purpose of this section is to show that important classes of self-similar processes $X^H$ satisfy either one of the bounds \eqref{PolDecayIII} and \eqref{ExpDecayIII}. Our motivating examples are the linear stable fractional Lévy motions $$Z^H(t) = \int^t_{-\infty}(t-s)^{H-1/\alpha}-(-s)^{H-1/\alpha}_+ \d L^\alpha(s),$$ which in particular include fractional Brownian motion, driven by some two-sided $\alpha$-stable Lévy process $L^\alpha$ with values in a Banach space $\mathcal B$ equipped with the Borel sigma algebra. Such processes have been considered as drivers of SPDE for example in \cite{FracLevyProcBanach}. Recall that an $\alpha$-stable Lévy process $(L^\alpha_t)_{t \geq 0}$ is defined as a stochastically continuous, $\mathcal B$-valued, almost surely càdlàg process, adapted on some filtered probability space $(\Omega, \mathcal F, (\mathcal F_t)_{t \geq 0}, \mathbb P)$ with $L^\alpha_0 \equiv 0$ that exhibits independent, stationary $\alpha$-stable increments.

Following \citet{AcostaStable}, we define a stable measure $\mu$ on the Borel sets of $\mathcal B$ through the property that for any $a, b > 0$, there exist $c> 0$ and $d \in \mathcal B$ with $$aX + bY \overset{d}{=} c X + d$$ whenever $X, Y \sim \mu$ are independent. As noted in the exposition there, if $\mu$ is symmetric (i.e. $\mu(A) = \mu(-A)$ for any Borel set $A$) then $d = 0$ and further, there exists $0 < \alpha \leq 2$ such that $c = (a^\alpha + b^\alpha)^{1/\alpha}$. Stable measures can also be characterised by the distributions of evaluations by functionals, since $\mu$ is symmetric $\alpha$-stable iff $\mu \circ \ell^{-1}$ is symmetric $\alpha$-stable on $\mathbb R$ for any linear functional $\ell \in F \subset \mathcal B^\ast$, where $(F,\mathcal B)$ form a semifull pair. An equivalent spectral formulation of this result is that there exists a function $\psi \colon F \rightarrow \mathbb R$ such that $\hat \mu(\ell) = \exp(-\psi(\ell))$ for any $\ell \in F$. The function $\psi$ is negative-definite and sequentially $\sigma(F, \mathcal B)$-continuous with $\psi(0) = 0, \psi \geq 0$ and $\psi(t\ell) = |t|^\alpha\psi(\ell)$. A thorough study of (symmetric) $\alpha$-stable distributions in Banach spaces can be found in \cite{LindeStable} and conditions for the existence of corresponding Lévy processes, along with sample path properties and conditions for sensible theories of stochastic integration can be found for example in \cite{DettweilerLevyProcBanach} or \cite{RiedleLevy}. We shall not be concerned here about the geometry of the Banach space since the processes relevant to us take values in the separable, reflexive, type $2$ Banach space $\mathcal B = L^2 \cap L^{r+1}$. 

We only sketch the construction of the integrals $Z^H$, as it follows the approach described in \cite{DettweilerLevyProcBanach}, \cite{RosinskiStochIntStable} or \cite{FracLevyProcBanach}. For each $t$, approximating the integrands with step functions, we can exploit independence of increments, stationarity, self-similarity and stability to obtain a sequence of random variables $$\begin{aligned}
X_n = \sum_{1 \leq k \leq n} c_{n,k}(L^\alpha_{t_{n+1}}-L^\alpha_{t_n}) \overset{d}{=} \sum_{1 \leq k \leq n} c_{n,k}L^{n,\alpha}_{t_{n+1}-t_n} &\overset{d}{=} \sum_{1 \leq k \leq n} c_{n,k}(t_{n+1}-t_n)^{1/\alpha}L^{n,\alpha}_1 \\& \overset{d}{=} \left(\sum_{1 \leq k \leq n} c^\alpha_{n,k}(t_{n+1}-t_n)\right)^{1/\alpha} L_1
\end{aligned}$$
which approximates the integrals. Here, $c_{n,k}$ are suitable point evaluations stemming from Riemann-Stieltjes sums, and $L^{n,\alpha} \sim L^\alpha$ are independent copies. By the mentioned properties of characteristic distributions of stable variables, these variables converge in distribution to $Z^H$ provided that $$\int_{-\infty}^t\left((t-s)^{H -1/\alpha} -(-s)_+^{H-1/\alpha}\right)^\alpha\d s < \infty.$$ This is given (cf. Thm 5.7 in \cite{FracLevyProcBanach}) for $1/\alpha < H < 1$, which imposes that $\alpha > 1$. As in the subsequent Theorems 5.8 and 5.9 therein, similar spectral arguments show self-similarity and stationarity of increments of the process associated with the measure constructed on $\mathcal B^{[0,\infty)}$ by the Kolmogorov extension theorem. We will study continuity and tails of Hölder norms of $Z^H$ in Section \ref{StationaryTailSection}.

Another natural way to construct an $H$-self-similar process (optionally with stationary increments) given i.i.d. copies $(X^H_i)_{i \geq 1}$ of a one-dimensional $H$-self-similar process is to define $$X^H = \sum_{i \geq 1} \lambda_i X^H_i e_i$$ given a Schauder Basis $(e_i)_{i \geq 1}$ of $\mathcal B$ and a sequence $\lambda_i > 0$ with suitable decay. Then $$\norm{X^H}_{C^\eta_0([0,T];\mathcal B)} \coloneqq \norm{\sum_{i \geq 1} \lambda_i X^H_i x_i}_{C^\eta_0([0,T];\mathcal B)} \leq \sum_{i \geq 1} \lambda_i \norm{x_i}_{\mathcal B} \norm{X^H_i}_{C^\eta_0([0,T];\mathbb R)}$$ satisfies the same tail estimates as the Hölder norm of $X_i$. The following Lemma provides some sufficient conditions for this to be the case for more general sums of i.i.d. variables, so that the above method also applies to other Banach spaces of paths, such as $L^\infty([0,T];\mathcal B)$. Depending on the geometry of $\mathcal B$ and the distribution of $X^H_i$, these conditions are far from necessary or sharp, but they are easily applicable in a wide range of cases. For simplicity, we assume that the processes in question are identically distributed and have identical tail decay, but note that this is not strictly necessary.

\begin{proposition} \label{SumTailEstimate}
Let $X_i \sim X$ be a sequence of i.i.d. non-negative real-valued random variables and $(\lambda_i)_{i \geq 1} \subset \mathbb R$ be a sequence with $\mathbb P\left(\sum_{i \geq 1} \lambda_iX_i < \infty \right) = 1$
\begin{enumerate}
    \item First, suppose that there exists $0 < \alpha < 1$ with $\mathbb P(X > t) \in \mathcal O(t^{-\alpha})$ and $\sum_{i \geq 1} \lambda_i^\alpha < \infty$. Then $\mathbb P\left( \sum_{i\geq 1} \lambda_i X_i > t\right) \in \mathcal O(\sum_{i \geq 1} \lambda_i^\alpha t^{-\alpha})$.
\end{enumerate}
Now assume additionally that there exists some $0 
< \beta < 1$ with $\sum_{i \geq 1} \lambda^{1-\beta}_i, \sum_{i \geq 1} \lambda^{\beta \alpha}_i < \infty.$ Then, for any $\alpha > 0$, 
\begin{enumerate}
    \item[2.] $\mathbb P(X > t) \in \mathcal O(t^{-\alpha})$ implies that $\mathbb P\left( \sum_{i\geq 1} \lambda_i X_i > t\right) \in \mathcal O(C_{\alpha,\beta}t^{-\alpha})$
    \item[3.] $\mathbb P(X > t) \in \mathcal O(\exp(-t^\alpha/k)$ implies $\mathbb P\left( \sum_{i\geq 1} \lambda_i X_i > t\right) \in \mathcal \mathcal O\left(\exp\left(-C_{\alpha,\beta}^{-1}t^\alpha/k\right)\right).$
\end{enumerate}
for $C_{\alpha, \beta} = \sum_{i\geq 1} \lambda_i^{\alpha\beta}\left(\sum_{i\geq 1} \lambda_i^{1-\beta}\right)^\alpha$.
\end{proposition}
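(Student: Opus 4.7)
The three parts all proceed via the same two-step template: an elementary deterministic pointwise inequality bounding $(\sum_i \lambda_i X_i)^\alpha$ by a weighted superposition of the $X_i^\alpha$, followed by a tail bound applied to this superposition via Markov's or Chernoff's inequality.

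For Part 1, I would proceed by truncation at $M_i := t/(2\lambda_i)$ and split
$$
\P\left(\sum_i \lambda_i X_i > t\right) \leq \P\left(\exists\, i: \lambda_i X_i > t/2\right) + \P\left(\sum_i \lambda_i X_i \mathds{1}_{X_i \leq M_i} > t/2\right).
$$
The first term is bounded by the union bound combined with the tail hypothesis: $\sum_i \P(X_i > M_i) \leq C(2/t)^\alpha \sum_i \lambda_i^\alpha$. The second is controlled by Markov's inequality at the first moment, using that for $\alpha < 1$ one has $\E[X \mathds{1}_{X \leq M}] = \int_0^M \P(X > u)\,du = \mathcal{O}(M^{1-\alpha})$; summing in $i$ yields a leading-order contribution of order $t^{-\alpha}\sum_i \lambda_i^\alpha$. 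A subdominant $\mathcal{O}(t^{-1}\sum_i \lambda_i)$ remainder is absorbed using that a.s.\ convergence of $\sum_i \lambda_i X_i$ for i.i.d.\ nonnegative $X_i$ forces $\sum_i \lambda_i < \infty$ via the strong law.

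For Parts 2 and 3, I would set $M := \sum_i \lambda_i^{1-\beta}$ and treat $\lambda_i^{1-\beta}/M$ as the weights of a probability measure on the indices. For $\alpha \geq 1$, Jensen's inequality applied to $x \mapsto x^\alpha$ yields
$$
\left(\sum_i \lambda_i X_i\right)^\alpha = M^\alpha\!\left(\sum_i \tfrac{\lambda_i^{1-\beta}}{M}\lambda_i^\beta X_i\right)^{\!\alpha} \leq M^{\alpha-1}\sum_i \lambda_i^{1-\beta+\alpha\beta} X_i^\alpha,
$$
and the correlation inequality $\sum_i \lambda_i^{1-\beta}\lambda_i^{\alpha\beta} \leq M \cdot \sum_j \lambda_j^{\alpha\beta}$ for nonnegative sequences then collapses this into the form $(\sum_i \lambda_i X_i)^\alpha \leq C_{\alpha,\beta}\sum_i q_i X_i^\alpha$ with $q_i \geq 0$ and $\sum_i q_i \leq 1$. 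For $\alpha < 1$ the analogous bound follows by replacing Jensen with subadditivity $(a+b)^\alpha \leq a^\alpha + b^\alpha$. Part 2 then reduces to Markov's inequality applied to the weighted sum $\sum_i q_i X_i^\alpha$. For Part 3, the tail hypothesis makes $X_i^\alpha$ sub-exponential with rate $1/k$, so $\E[\exp(\theta X_i^\alpha)] = 1 + \mathcal{O}(\theta)$ near the origin; by independence and $\sum_i q_i \leq 1$ one obtains $\E[\exp(\theta \sum_i q_i X_i^\alpha)] = \prod_i \E[\exp(\theta q_i X_i^\alpha)] \leq \exp(C\theta)$ for sufficiently small $\theta$, and the Chernoff bound evaluated near $\theta = 1/k$ delivers the stated exponential decay with rate $1/(kC_{\alpha,\beta})$.

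The main obstacle I anticipate is the borderline regime in Part 2, where $\P(X > t) \in \mathcal{O}(t^{-\alpha})$ does not by itself guarantee finiteness of $\E[X^\alpha]$. To close this gap, I would truncate each $X_i$ at level $t$ before applying the Jensen step, treating the excess $\{X_i > t\}$ via a union bound as in Part 1, and absorbing any resulting logarithmic prefactor on the truncated $\alpha$-moment into the generic constant of the $\mathcal{O}(C_{\alpha,\beta}t^{-\alpha})$ bound.
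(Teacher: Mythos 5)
Your Part 1 takes a genuinely different route from the paper's (the paper works with Laplace transforms: it shows $1-\hat\phi_X(s)\lesssim s^\alpha$ for small $s$, multiplies to get $\hat\phi_\Sigma(s)\geq 1-C\sum_i\lambda_i^\alpha s^\alpha$, and converts back to a tail bound). Your truncation-plus-Markov argument is equally valid. One small wrinkle: your justification that $\sum_i\lambda_i<\infty$ — ``a.s.\ convergence of $\sum_i\lambda_i X_i$ for i.i.d.\ nonnegative $X_i$ forces $\sum_i\lambda_i<\infty$ via the strong law'' — is not a correct argument (the strong law is not applicable here for $\alpha<1$, where $\E X$ may be infinite). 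But the conclusion is automatic anyway: since $\alpha<1$ and $\lambda_i\to 0$, eventually $\lambda_i\leq\lambda_i^\alpha$, so $\sum_i\lambda_i^\alpha<\infty$ already implies $\sum_i\lambda_i<\infty$.

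Parts 2 and 3 contain genuine gaps. The Jensen step itself is fine and reproduces the constant $C_{\alpha,\beta}$ correctly, but the downstream step requires moments that the hypotheses do not provide, and your anticipated fix does not close the gap. For Part 2, truncating every $X_i$ at the same level $t$ fails immediately: the excess event has probability bounded by $\sum_i\P(X_i>t)=\infty\cdot\P(X>t)$ over the countable index set, so the union bound diverges. The only sensible truncation is at the $i$-dependent level $t/\lambda_i^\beta$, exactly as in Part 1, but then on the good event you directly have $\sum_i\lambda_iX_i\leq t\sum_i\lambda_i^{1-\beta}$ and the Jensen step is superfluous. Moreover, even if one could run Markov at power $\alpha$, the truncated $\alpha$-moment $\E[X^\alpha\mathds{1}_{X\leq M}]$ under the borderline hypothesis $\P(X>u)\in\mathcal{O}(u^{-\alpha})$ grows like $\log M$; this logarithm cannot be ``absorbed into the generic constant'' of an $\mathcal{O}(t^{-\alpha})$ bound, since it diverges as $t\to\infty$. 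For Part 3, the Chernoff bound can only be evaluated at $\theta<1/k$ (at $\theta=1/k$ the moment generating function of $X^\alpha$ may be infinite under the stated tail hypothesis), which yields a decay rate $\theta/C_{\alpha,\beta}$ strictly below the claimed $1/(kC_{\alpha,\beta})$; this is a weaker asymptotic, not the one asserted.

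The paper avoids both issues by working with events rather than moments: it bounds
$\P\left(\forall i\colon X_i\leq t/\lambda_i^\beta\right)=\prod_i\P(X_i\leq t/\lambda_i^\beta)$
from below directly via the tail hypothesis and the elementary inequality $\log(1-x)\geq -x/\sqrt{1-x}$, then uses the inclusion
$\left\{\forall i\colon X_i\leq t/\lambda_i^\beta\right\}\subset\left\{\sum_i\lambda_iX_i\leq t\sum_i\lambda_i^{1-\beta}\right\}$
to transfer the estimate. This sidesteps moment finiteness entirely and, for the exponential case, recovers the exact rate $1/(kC_{\alpha,\beta})$ by factoring out $\exp\bigl(-t^\alpha/(k\sum_n\lambda_n^{\alpha\beta})\bigr)$ from the resulting series. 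You would need to adopt this event-based structure (or an equivalent moment-free argument) to close Parts 2 and 3.
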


\begin{proof}[Proof of Lemma \ref{SumTailEstimate}]
First, assume that $\alpha < 1$. Let $\hat\phi_X$ denote the Laplace transform of $X$ and $\hat \phi_\Sigma$ denote the Laplace transform of $\sum_{i \geq 1} \lambda_i X_i$. By assumption, $$\begin{aligned}
1-\hat \phi_X(s) = \mathbb E[1-e^{-sX}] &= \int_0^\infty \mathbb E[se^{-sx}\mathds{1}_{[0,X)}(t)] \d t \\&= \int_0^\infty s e^{-st}\mathbb P(X > t) \d t \\ & \leq C \int_{0} se^{-st} t^{-\alpha}\d t \\&= Cs^\alpha  \int_0^\infty e^{-t}t^{-\alpha} \d t
\end{aligned}$$ so $\hat \phi_X(s) \geq 1- C_\alpha s^\alpha$ for small $s$ and $C_\alpha \coloneqq C \Gamma(1-\alpha)$. Our goal is to now translate this estimate into an estimate on $\hat \phi_\Sigma$. The Lebesgue DCT implies that $$ \hat \phi_\Sigma(s) = \mathbb E \left[e^{-s\sum_{i \geq 1} \lambda_i X_i}\right] = \prod_{i \geq 1} \mathbb E \left[e^{- \lambda_i s X_i}\right] =  \prod_{i \geq 1} \hat \phi_X(\lambda_i s).$$ Thus, for small enough $s$, we find that $\hat \phi_\Sigma(s) \geq \prod_{i \geq 1} 1- C_\alpha \lambda^{\alpha}_i s^{\alpha}.$ The elementary estimate $-x \geq \log(1-x) \geq -\frac{x}{\sqrt{1-x}}$
then first implies that $ \hat \phi_\Sigma(s) \geq e^{-\tilde C'_\alpha \sum_{i \geq 1} \lambda^{\alpha}_i s^{\alpha}}$ for some constant $C'_\alpha$ and we subsequently that $$\hat \phi_\Sigma (s) \geq 1-C'_\alpha \sum_{i \geq 1} \lambda^{\alpha}_i s^{\alpha}$$ for $s \ll 1$. Now, as a consequence, we find that for large $t$, $$\begin{aligned}
C'_\alpha \sum_{i \geq 1} \lambda^\alpha_i \cdot t^{-\alpha} \geq 1-\hat \phi(1/t) = \mathbb E[1-e^{-X/t}] \geq  (1-e^{-1})\mathbb E[\mathds 1_{\Set{X > t}}] = (1-e^{-1})\mathbb P(X > t).
\end{aligned}$$
To prove the second part of the proposition, assume w.l.o.g. that $\sup_i \lambda_i \leq 1$. We first remark that by assumption, there exists some $C' >0$ such that for any $t \gg 1$, $$\mathbb P\left( \forall i \colon X_i \leq \frac{t}{\lambda^\beta_i}\right) = \prod_{i \geq 1} \mathbb P\left( X_i \leq \frac{t}{\lambda^\beta_i}\right) \geq \prod_{i \geq 1} 1 - C' \lambda_i^{\alpha \beta} t^{-\alpha} \geq e^{- \tilde C\sum_{i \geq 1} \lambda_i^{\alpha \beta} \cdot t^{-\alpha}} \geq 1-\tilde C\sum_{i \geq 1} \lambda_i^{\alpha \beta} \cdot t^{-\alpha},$$ where the constant $C$ derives from the logarithmic estimate used in the proof of the first statement. As $$\mathbb P\left(\sum_{i \geq 1} \lambda_i X_i \leq \sum_{i \geq 1} \lambda^{1-\beta}_i t\right) \geq \mathbb P\left( \forall i \colon X_i \leq \frac{t}{\lambda^\beta_i}\right),$$ the proposed estimates follow in the first case. To prove the second case, we repeat the above calculation and apply the fact that $$\sum_{i \geq 1} \exp{\left(-\frac{b^\alpha}{k}\left(\frac{1}{\lambda^{\alpha \beta}_i} - \frac{1}{\sum_{n \geq 1} \lambda^{\alpha \beta}_n}\right)\right)}$$ is bounded to derive the desired asymptotic.
\end{proof}
\subsection{Self-similar processes with stationary increments} \label{StationaryTailSection}
Suppose henceforth that $X^H$ possesses stationary increments. Furthermore, suppose that either \begin{equation} \label{PolDecay}
\mathbb P\left(\norm{X^H(1)}_\mathcal B > b\right) \in \mathcal O\left(b^{-\alpha}\right), ~ b \to \infty \tag{I}
\end{equation} or
\begin{equation}\label{ExpDecay}
\mathbb P\left(\norm{X^H(1)}_\mathcal B > b\right) \in \mathcal O\left(\exp(-b^\alpha/k)\right), ~ b \to \infty \tag{II}
\end{equation}
for some $\alpha, k > 0$. Various examples of one-dimensional processes that satisfy these assumptions are described in (\cite{TaqquSamorodnitsky}, Ch. 7) or \cite{EmbrechtsMaejimaSelfSimilar}. Infinite-dimensional extensions, as well as stochastic differential equations driven by non-Gaussian self-similar processes in Banach spaces have been investigated for example in \cite{FracLevyProcBanach} or \cite{MaslowskiOndrejat}. 

Define the Dudley metric as $d_{X,r}(t,s)= \mathbb E\left(\lVert X_t-X_s\rVert^r_{\mathcal B}\right)^{1/r}$ on $[0,T]$, for $ 0 < r < \alpha$. It is well known that the topological entropy with respect to this metric contains important information on the regularity of the stochastic process $X^H$ (\cite{TalagrandProbabilityBanach} Ch. 11 and 12). A less intricate avenue towards estimates on Hölder norms in particular is open in the case that $X$ has stationary increments. Then, as a consequence of stationarity of increments and self-similarity, $d_{X,r}(t,s) \propto |t-s|^{H}.$ This in particular implies that $\mathbb E[|X_t-X_s|^r] \propto |t-s|^{rH}$ and we can apply the Kolmogorov continuity theorem for complete metric spaces. 

In case \eqref{ExpDecay}, we then find that there exists a version $Y$ of $X$ such that $Y \in C^\eta_0([0,1];\mathcal B)$ for all $0 <\eta < H$ almost surely. In case \eqref{PolDecay}, we need to assume that $H > 1/\alpha$ and closer inspection shows that $Y \in C^\eta_0([0,1];\mathcal B)$ almost surely for all $0 <\eta < H - 1/\alpha$.
\begin{remark}
In case \ref{PolDecay}, the condition $H > 1/\alpha$ excludes the possibility that $\alpha \leq 1$ and it turns out that for general $\alpha$-stable processes, this condition is necessary for $\alpha \neq 2$: some standard self-similar, $\alpha$-stable processes are Hölder continuous only for $1 < \alpha < 2$ and $H \in (1/\alpha, 1]$ (see Examples 5 to 7 in  \cite{TaqquSamorodnitsky}, Ch. 10.2). For $\alpha = 2$, it is of course well-known that the unique $H$-self similar Gaussian process with stationary increments, the fBm, is $\eta$-Hölder continuous for all $0 < \eta < H$. 
\end{remark} 
\begin{lemma} \label{StationaryTail}
Let $X^H$ be $H$-self-similar with stationary increments.
\begin{enumerate}
    \item Assume that \eqref{PolDecay} holds for some $\alpha > 1$ and $H > 1/\alpha$. Then   for any $0 < \eta < H-1/\alpha$. $$ \mathbb P\left( \sup_{0 \leq s < t \leq 1} \frac{\norm{ X(t)-X(s)}_\mathcal{B}}{\lvert t-s\rvert^\eta}  > b\right) \in \mathcal O\left(b^{-\alpha}\right),~b \to \infty.$$
    \item Assume that \eqref{ExpDecay} holds for some $\alpha,k > 0$. Then, for any $0 < \eta < H$ $$ \mathbb P\left( \sup_{0 \leq s < t \leq 1} \frac{\norm{ X(t)-X(s)}_\mathcal{B}}{\lvert t-s\rvert^\eta}  > b\right) \in \mathcal O\left(\exp(-K^{-\alpha}_\eta b^{\alpha}/k)\right),~b \to \infty$$
    with $K_\eta = \frac{4}{(2^\eta-1)(2^{1-\eta}-1)}$.
\end{enumerate}
\end{lemma}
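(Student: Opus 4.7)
The plan is to combine a classical dyadic chaining argument with a union bound driven by self-similarity and stationarity of increments. Define the dyadic maximal increments
\[
M_\ell := \max_{0 \leq j < 2^\ell} \bigl\lVert X((j+1) 2^{-\ell}) - X(j \cdot 2^{-\ell}) \bigr\rVert_{\mathcal B}, \qquad \ell \geq 0.
\]
By stationarity of increments and $H$-self-similarity each single increment is distributed as $2^{-\ell H} \lVert X(1)\rVert_{\mathcal B}$, so a union bound over the $2^\ell$ intervals at level $\ell$ yields $\mathbb{P}(M_\ell > a) \leq 2^\ell \, \mathbb{P}(\lVert X(1)\rVert_{\mathcal B} > 2^{\ell H} a)$. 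A continuous modification of $X$ exists by Kolmogorov's continuity theorem applied to $\mathbb{E}\lVert X(t) - X(s)\rVert^r_{\mathcal B} = |t-s|^{rH}\mathbb{E}\lVert X(1)\rVert^r_{\mathcal B}$: the exponent $rH > 1$ is attainable for $r$ close to $\alpha$ under $H > 1/\alpha$ in case (1), and for arbitrary $r$ in case (2). We work with this modification throughout.

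Next I establish the deterministic chaining inequality
\[
\sup_{0 \leq s < t \leq 1} \frac{\lVert X(t) - X(s)\rVert_{\mathcal B}}{|t-s|^\eta} \leq K_\eta \sup_{\ell \geq 0} 2^{\ell \eta} M_\ell.
\]
For dyadic $s < t$ with $2^{-(n+1)} < t - s \leq 2^{-n}$, I construct a telescoping chain by approximating $s$ and $t$ by dyadic rationals at successively finer scales $\ell > n$ and connecting their coarse approximations by at most one edge at level $n$. Bounding each edge at level $\ell$ by $M_\ell$ and summing the resulting geometric series produces two contributions: a coarse-scale factor $2^{1-\eta}/(2^{1-\eta}-1)$ arising from distributing the length $(t-s) \sim 2^{-n}$, and a fine-scale factor $2^\eta/(2^\eta-1)$ arising from the $(t-s)^\eta$ normalisation; together with a factor of $2$ for the symmetric treatment of the two endpoints, this produces the precise constant $K_\eta = 4/((2^\eta-1)(2^{1-\eta}-1))$. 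Density of dyadic points combined with continuity of the chosen modification extends the bound to arbitrary $s, t \in [0,1]$.

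Combining the two steps via a further union bound over $\ell$ gives
\[
\mathbb{P}\bigl(\lVert X\rVert_{C^\eta_0([0,1];\mathcal{B})} > b\bigr) \leq \sum_{\ell \geq 0} 2^\ell \, \mathbb{P}\bigl(\lVert X(1)\rVert_{\mathcal B} > b \cdot 2^{\ell(H-\eta)}/K_\eta\bigr).
\]
In case (1), the polynomial tail bounds each summand by $K_\eta^\alpha b^{-\alpha} 2^{\ell(1 - \alpha(H-\eta))}$; the hypothesis $\eta < H - 1/\alpha$ forces $1 - \alpha(H-\eta) < 0$, so the geometric series converges and produces the $\mathcal{O}(b^{-\alpha})$ bound. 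In case (2), the exponential tail with constant $k$ makes the $\ell = 0$ term contribute the dominant $\exp(-K_\eta^{-\alpha} b^\alpha / k)$, while the $\ell \geq 1$ terms carry additional super-exponential damping $\exp(-K_\eta^{-\alpha} b^\alpha(2^{\ell \alpha(H-\eta)}-1)/k)$ (using $H > \eta$) and are $o(1)$ times the leading contribution as $b \to \infty$.

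The principal technical obstacle is the sharp deterministic chaining inequality with the specific constant $K_\eta$: careful bookkeeping of the dyadic decomposition is required to isolate the coarse-scale and fine-scale geometric contributions and to verify that only two edges per dyadic level are needed on each side. Once this is established, the probabilistic step reduces to the direct substitutions outlined above.
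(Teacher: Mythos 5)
Your argument is correct and, once unpacked, is essentially the same dyadic-union-bound mechanism as the paper's, but you set up the deterministic step differently. The paper invokes the Ciesielski--Rackauskas norm equivalence for Banach-valued H\"older spaces, bounding the H\"older seminorm by $\tfrac14 K_\eta$ times a supremum of scaled \emph{second} dyadic differences, and only then splits each second difference into two first differences before applying the union bound. You instead chain directly with first-difference dyadic increments $M_\ell$ and claim $\sup_{s<t}\frac{\lVert X(t)-X(s)\rVert_{\mathcal B}}{|t-s|^\eta}\le K_\eta\sup_\ell 2^{\ell\eta}M_\ell$. That inequality does hold: from the cited Ciesielski bound, replacing each second difference $\lVert X((2k)2^{-j})-2X((2k-1)2^{-j})+X((2k-2)2^{-j})\rVert$ by $2M_j$ and $\lVert X(1)\rVert$ by $M_0$ yields $\sup_{s<t}(\cdots)\le\tfrac34 K_\eta\sup_\ell 2^{\ell\eta}M_\ell\le K_\eta\sup_\ell 2^{\ell\eta}M_\ell$, so your starting point is a strict weakening of theirs. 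The probabilistic half --- union bound over the $2^\ell$ increments at level $\ell$, reduction to the law of $X(1)$ via stationarity of increments and $H$-self-similarity, and a geometric-series estimate in the parameter $1-\alpha(H-\eta)$ (resp.\ domination by the $\ell=0$ term in the exponential case) --- is identical in both proofs.

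The one point you should be careful about is the claim that your ``from scratch'' chaining bookkeeping produces \emph{exactly} $K_\eta=4/((2^\eta-1)(2^{1-\eta}-1))$. Your heuristic decomposition into a factor $2^{1-\eta}/(2^{1-\eta}-1)$, a factor $2^\eta/(2^\eta-1)$ and a factor $2$ multiplies out to $K_\eta$ numerically, but standard dyadic chaining with first differences actually yields a \emph{smaller} constant (roughly $2\cdot 2^\eta/(2^\eta-1)$, which one checks is $<K_\eta$ throughout $\eta\in(0,1)$). This is harmless here since any constant $\le K_\eta$ gives the stated conclusion, and the Ciesielski reference independently certifies that $K_\eta$ works --- but a self-contained derivation of the precise factor would require more care than the proposal indicates, and you may find it cleaner to simply cite the Ciesielski--Rackauskas result for the norm equivalence as the paper does, then pass to first differences as above.
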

As a consequence of Proposition \ref{StationaryTail}, one can obtain tail estimates on Hölder norms of fractional $\alpha$-stable Levy processes such as fractional Brownian motion or the linear stable fractional motion, and other non-Gaussian processes such as the Rosenblatt process, provided we know tail estimates of the marginal at time $t = 1$. It is further of note that this result yields tails for Hölder norms of iterated processes such as $X^{H_1}(\beta^{H_2}(t))$, where $X^{H_1}$ is a two-sided $\mathcal B$-valued Hölder continuous self-similar process and $\beta^{H_2}(t) \in C^\eta_0([0,\infty))$ is real-valued and self-similar.

For the proof of Proposition \ref{StationaryTail}, we need a generalisation of the linear homeomorphism from the Hölder space $C_0^{\eta}([0,1];\mathbb R)$  to the space $(\ell^\infty, \norm{\cdot}_\infty)$. This isomorphism of topological vector spaces was first established in \citet{CiesielskiHoelder} and  extended in \cite{RackauskasBanach} to the Banach space valued case. The rest of the argument is then an elementary modification of the estimates detailed in \cite{GiraudoMO}.
\begin{proof}[Proof of Proposition \ref{StationaryTail}]
Consider the norm induced on $C_0^{\eta}$ by the Ciesielski isomorphism \cite{RackauskasBanach}, with $\lVert \cdot \rVert_\eta$ defined by $$
\lVert x\rVert_\eta \coloneqq \left\lVert x\left(1\right)\right\rVert_{\mathcal B} + \sup_{j\geq 1}2^{j\eta}\max_{1\leq k\leq 2^{j-1}}\left\lVert x\left(\left(2k\right)2^{-j}\right)-2x\left(\left(2k-1\right)2^{-j}\right) +x\left(\left(2k-2\right)2^{-j}\right)\right\rVert_{\mathcal B}.$$ As in the real-valued case,$$\sup_{0 \leq s < t \leq 1} \frac{\lVert x(t)-x(s)\rVert_{\mathcal B}}{\lvert t-s\rvert^\eta} \leq \frac14K_\eta\lVert{x}\rVert_\eta$$ with $K_\eta = \frac{4}{(2^\eta-1)(2^{1-\eta}-1)}.$ Here, the coefficient $\frac14$ is chosen with view to the estimates that follow.
Applying the equivalence of norms to our problem, we see that $$\mathbb P\left( \sup_{0 \leq s < t < 1} \frac{\norm{ X^H(t)-X^H(s)}_\mathcal{B}}{\lvert t-s\rvert^\eta}  > b\right) \leq \mathbb P\left( K_\eta \lVert X^H \rVert_\eta  > b\right)$$ and therefore, we only need to find estimates on $$
\begin{aligned}
\mathbb P\left(\lVert X^H \rVert_\eta > 4 b\right) &\leq \sum_{j=1}^{\infty}\sum_{k=1}^{2^{j-1}}
\mathbb P\left(\left\lVert X^H\left(\left(2k\right)2^{-j}\right)-2X^H\left(\left(2k-1\right)2^{-j}\right) +X^H\left(\left(2k-2\right)2^{-j}\right)\right\rVert_\mathcal{B}> 2b 2^{-j\eta}\right) \\ 
& \quad + \mathbb P\left(\left \lVert X^H(1) \right \rVert > 2b \right)\\
&\leq \sum_{j=1}^{\infty}\sum_{k=1}^{2^{j-1}}
\mathbb P\left(\left\lVert X^H\left(\left(2k\right)2^{-j}\right)-X^H\left(\left(2k-1\right)2^{-j}\right)\right \rVert_\mathcal{B} > b2^{-j\eta} \right) \\ & \quad \quad \quad \quad + \mathbb P\left(\left \lVert X^H\left(\left(2k-1\right)2^{-j}\right) - X^H\left(\left(2k-2\right)2^{-j}\right)\right \rVert_\mathcal{B}> b2^{-j\eta} \right) \\ 
& \quad + \mathbb P\left(\left \lVert X^H(1) \right \rVert_\mathcal{B} > 2b \right)
\end{aligned}
$$
For simplicity, let $Z \coloneqq X^H(1)$. By stationarity of increments and self-similarity, we observe that tail estimates of increments are equal to tail estimates of $Z$, i.e.
$$\mathbb P\left(\left\lVert X^H\left(\left(2k\right)2^{-j}\right)-X^H\left(\left(2k-1\right)2^{-j}\right)\right \rVert_\mathcal{B} > b2^{-j\eta} \right) \leq \mathbb P\left(\lVert Z\rVert_\mathcal{B} > b2^{j(H - \eta)}\right)$$
and hence 
$$
\mathbb P\left(\lVert X^H\rVert_\eta > 4b\right)\leq \sum_{j=1}^{\infty} 2^{j+1}
\mathbb P\left(\lVert Z\rVert_\mathcal{B} > b2^{j(H - \eta)}\right) + \mathbb P(\norm{Z}_\mathcal{B} > 2b).
$$
In case \eqref{PolDecay}, the claimed estimate now follows by plugging in the assumed tail estimate for $ 0 < \eta < H-1/\alpha$. In the second case, the same method gives us the estimate $$\mathbb P\left(\lVert X^H\rVert_\eta > 4b\right) \leq \mathcal O\left(\exp(-b^\alpha/k)\right)$$ by an elementary but slightly more involved estimate. Plugging in $\frac{b}{K_\eta}$ then yields the desired expression for $0 < \eta < H$.
\end{proof}

\section*{Acknowledgement}
AS is supported by Deutsche Forschungsgemeinschaft (DFG, German Research
Foundation) under Germany's Excellence Strategy - The Berlin Mathematics
Research Center MATH+ (EXC-2046/1, project ID: 390685689). WS acknowledges seed support for DFG CRC/TRR 388 “Rough Analysis, Stochastic Dynamics and Related Topics”.

\bibliography{main}
\end{document}